\documentclass{amsart}

\usepackage{amsmath}
\usepackage{amsfonts}
\usepackage{amssymb}
\usepackage{amsthm}

\usepackage[greek,french,english]{babel}
\usepackage[T1]{fontenc}
\usepackage[latin1]{inputenc}
\usepackage{graphics}
\usepackage{enumerate}


\newcommand{\R}{\mathbb R}
\newcommand{\N}{\mathbb N}
\newcommand{\Z}{\mathbb Z}
\newcommand{\T}{\mathbb T}
\newcommand{\e}{\varepsilon}
\newcommand{\be}{\begin{equation}}
\newcommand{\ee}{\end{equation}}
\newcommand{\dv}{\mathrm{div}}
\newcommand{\p}{\partial}

\newcommand{\sgn}{\mathrm{sgn}}

\newcommand{\bu}{\bar{u}}

\newcommand{\ds}{\displaystyle}
\newcommand{\HP}{H^1_{\text{per}}(\T^N)}

\newcommand{\mean}[1]{\left\langle #1\right\rangle}

\newcommand{\cA}{\mathcal A}
\newcommand{\cG}{\mathcal G}

\newtheorem{prop}{Proposition}[section]
\newtheorem{lem}{Lemma}[section]
\newtheorem{lemA}{Lemma}

\newtheorem{thm}{Theorem}[section]

\newtheorem{corol}{Corollary}[section]

\theoremstyle{definition}
\newtheorem{rem}{Remark}

\newtheorem*{ex}{Example}

\begin{document}
\title{Long time behaviour of viscous scalar conservation laws}

\author{Anne-Laure Dalibard}
\address{DMA/CNRS, Ecole Normale Sup\'erieure, 45 rue d'Ulm, 75005 Paris, FRANCE. e-mail: {\tt Anne-Laure.Dalibard@ens.fr}}

\bibliographystyle{amsplain}
\maketitle
\begin{abstract}
This paper is concerned with the stability of stationary solutions of the conservation law $\partial_t u + \mathrm{div}_y A(y,u) -\Delta_y u=0$, where the flux $A$ is periodic with respect to its first variable. Essentially two kinds of asymptotic behaviours are studied here: the case when the equation is set on $\R$, and the case when it is endowed with periodic boundary conditions. In the whole space case, we first prove the existence of viscous stationary shocks - also called standing shocks - which connect two different periodic stationary solutions to one another. We prove that standing shocks are stable in $L^1$, provided the initial disturbance satisfies some appropriate boundedness conditions. We also extend this result to arbitrary initial data, but with some restrictions on the flux $A$.
In the periodic case, we prove that periodic stationary solutions are always stable. The proof of this result relies on the derivation of uniform $L^\infty$ bounds on the solution of the conservation law, and on sub- and super-solution techniques.

\end{abstract}

\textbf{Keywords.}
Viscous shocks; shock stability; viscous scalar conservation laws. \\

\textbf{AMS subject classifications.} 35B35, 35B40, 76L05.

\section{Introduction}

This paper is devoted to the analysis of the long-time behaviour of the solution $u\in\mathcal C([0,\infty), L^1_{\text{loc}}(Q))\cap L^\infty_\text{loc}([0,\infty), L^\infty(Q))$ of the equation
\be\begin{aligned}
&\p_t u + \dv_y A(y,u) -\Delta_y u=0,\quad t>0, y\in Q,\\
&u_{|t=0}=u_0\in  L^\infty (Q).
\end{aligned}\label{eq:evol_per}
\ee
Above, $Q$ denotes either $\R$ or $\T^N$, the $N$-dimensional torus ($\T^N=\R^N/ [0,1)^N$), and $A\in W^{1,\infty}_\text{loc}(\T^N\times \R)^N$ is an $N$-dimensional flux (with $N=1$ when $Q=\R$).

Heuristically, it can be expected that the parabolicity of equation \eqref{eq:evol_per}  may yield some compactness on the trajectory $\{ u(t)\}_{t\geq 0}.$ Hence, it is legitimate to conjecture that the family $u(t)$ will converge as $t\to\infty$ towards a stationary solution of \eqref{eq:evol_per}. Such a result was proved when $Q=\T^N$ by the author in \cite{initiallayer} for a certain class of initial conditions, namely when $u_0$ is bounded from above and below by two stationary solutions of \eqref{eq:evol_per}. Such an assumption is in fact classical in the framework of conservation laws which admit a comparison principle: we refer for instance to \cite{Bonforte_al}, where the authors study the long time behaviour of the fast diffusion equation, and assume that the initial data is bounded by two Barenblatt profiles. The same kind of assumption was made in the context of travelling waves by Stanley Osher and James Ralston in \cite{OR}; let us also mention the review paper by Denis Serre \cite{SerreHandbook}, which is devoted to the stability of shock profiles of scalar conservation laws, and in which the author assumes at first that the initial data is bounded from above and below by shifted shock profiles.
Nonetheless, in \cite{FS} (see also \cite{SerreL1,SerreHandbook}), Heinrich Freist\"uhler  and Denis Serre  remove this hypothesis, and prove that shock stability holds under a mere $L^1$ assumption on the initial data.

The goal of this paper is to extend the result of \cite{initiallayer} to arbitrary initial data, that is, to prove that solutions of \eqref{eq:evol_per} converge towards a stationary solution for any initial data $u_0\in L^\infty(\T^N)$. We also tackle similar issues on the stability of stationary shock profiles in dimension one, when the equation is set on the whole space case ($Q=\R$). Thus, a large part of the paper is devoted to the proof of the existence of shock profiles, and to the analysis of their properties. We will see that the question of shock stability reduces in fact to the stability of periodic stationary solutions  of \eqref{eq:evol_per} in $L^1(\R)$. Unfortunately, we have been unable to prove that periodic stationary solutions of \eqref{eq:evol_per} are stable in $L^1(\R)$ for arbitrary fluxes. Hence we have left this issue open, and we hope to come back to it in a future paper.

The proof of stability in the periodic setting relies strongly on the derivation of uniform $L^\infty$ bounds on the family $\{u(t)\}_{t\geq 0}$. In the whole space case, the first step of the analysis is to prove the property for initial data which are bounded from above and below by viscous shocks; in fact, this result is similar to the one proved in \cite{initiallayer}, and  uses arguments from  dynamical systems theory, following an idea by S. Osher and J. Ralston \cite{OR} (see also \cite{SerreHandbook,AmadoriSerre}). But the derivation of uniform $L^\infty$ bounds is not sufficient to obtain a general stability result in the whole space case. Thus the idea is to use existing results on the long time behaviour of transport equations, which were obtained by Adrien Blanchet, Jean Dolbeault and Michal Kowalczyk in \cite{BDK}, and to apply those in the present context.
\vskip2mm

Throughout the paper, we use the following notation: if $v\in L^1(\T^N)$,
$$
\mean{v}=\int_{\T^N} v.
$$
We denote by $L^1_0(Q)$ the set of intergrable functions with zero mass:
$$
L^1_0(Q):=\{u\in L^1(Q), \ \int_Q u=0\}.
$$

Following \cite{ladypara}, for $\alpha\in(0,1)$, we define, if $I$ is an interval in $(0,\infty)$ and $\Omega$ is a domain in $\R^N$,
$$
H^{\frac{\alpha}{2},\alpha}(I\times \Omega)=\{u\in\mathcal C(\bar I\times \bar \Omega),\,\ \| u\|_{H^{\alpha/2,\alpha}(I\times \Omega)}<\infty \},
$$
where
\begin{eqnarray*}
 \| u\|_{H^{\frac{\alpha}{2},\alpha}(I\times \Omega)}&:=&\max_{(t,x)\in \bar I\times \bar \Omega}|u(t,x)|\\&+& \sup_{\substack{(x,t)\in I\times \Omega,\\(x',t')\in I\times \Omega,\\ |t-t'|\leq \rho}} \frac{|u(t,x)-u(t',x)|}{|t-t'|^{\alpha/2}}+\sup_{\substack{(x,t)\in I\times \Omega,\\(x',t')\in I\times \Omega,\\ |x-x'|\leq \rho}} \frac{|u(t,x)-u(t,x')|}{|x-x'|^\alpha};
\end{eqnarray*}
above, $\rho$ is any positive number.
We also set
$$
\mathcal C^\alpha (\Omega):=\left\{ u\in\mathcal C(\bar \Omega),\ \sup_{(x,x')\in\Omega^2}\frac{|u(x)-u(x')|}{|x-x'|^\alpha}<+\infty \right\}.
$$
Eventually, for $f\in L^1_\text{loc}(\R)$, $h\in \R$, we set $\tau_h f= f(\cdot +h)$.

\section{Main results}
\label{sec:results}

Before stating our main results, we recall  general features of equation \eqref{eq:evol_per}, together with some facts related to the stationary solutions of this equation.

In the rest of this paper, we denote by $S_t$ the semi-group associated with equation \eqref{eq:evol_per}. Notice that $S_t$ is always well-defined on $L^\infty(Q)$, thanks to the papers by Kru{\v{z}}kov \cite{kruzkhov1,kruzkhov2}. Moreover, we recall that the following  properties hold true (these are called the \textbf{Co-properties} in \cite{SerreHandbook}):
\begin{itemize}
 \item \textbf{Comparison:} if $a,b\in L^\infty(Q)$ are such that $a\leq b$, then $S_t a \leq S_t b$ for all $t\geq 0$.
\item \textbf{Contraction:} if $a,b\in L^\infty(Q)$ are such that $a-b\in L^1(Q)$, then $S_t a - S_t b \in L^1(Q)$ for all $t\geq 0$ and
$$
\|S_t a -S_t b \|_{L^1}\leq \| a-b\|_{L^1}\quad\forall t\geq 0.
$$

\item \textbf{Conservation:} if $a,b\in L^\infty(Q)$ are such that $a-b\in L^1(Q)$, then $S_t a - S_t b \in L^1(Q)$ for all $t\geq 0$ and
$$
\int_{Q}(S_t a -S_t b)=\int_Q(a-b)\quad\forall t\geq 0.
$$
\end{itemize}
Thanks to the Contraction property, the semi-group $S_t$ can be extended on $L^\infty(Q)+ L^1(Q)$.
The so-called ``Constant property'' in \cite{SerreHandbook} is not true in the present setting, since the flux $A$ does not commute with translations. In other words, constants are not stationary solutions of equation \eqref{eq:evol_per} in general. The existence of periodic (in space) stationary solutions of \eqref{eq:evol_per} was proved by the author in \cite{homogpara}, and we recall the corresponding result below:

\begin{prop}

Let $A\in W_{\text{loc}}^{1,\infty}(\T^N\times\R)^N$. Assume that
there exist $C_0>0$, $m\in [0,\infty)$, $n\in[0,\frac{N+2}{N-2})$
when $N\geq 3$, such that for all $(y,p)\in \T^N \times \R$
\begin{gather}
|\p_p A_i(y,p)|\leq C_0 \left(1 + |p|^m \right)\quad \forall\ 1\leq
i\leq N,\label{polygrowthai} \\
\left|\dv_y A (y,p)\right|\leq
C_0 \left(1 + |p|^n \right).\label{polygrowthaN}
\end{gather}
Assume as well that one of the following conditions holds:
\begin{gather}
m=0\ \text{or}\  0\leq
n<1  \ \text{or}\ \left( n<\min\left(\frac{N+2}{N},2\right)\ \text{and}\
\exists p_0\in \R,\ \dv_y A(\cdot,p_0)=
0\right).\label{hyp:ex_cell}
\end{gather}

Then for all $p\in\R$, there exists a unique solution $v(\cdot, p)\in \HP$
of the equation \be -\Delta_y v(y,p) + \dv_y
A(y,v(y,p))=0, \quad \mean{v(\cdot,p)}=p.\label{cell}\ee

The family $(v(\cdot, p))_{p\in\R}$ satisfies the following properties:

\begin{enumerate}[\bf (i)]
 \item Regularity estimates:
For all $p\in\R$, $v(\cdot,p)$ belongs to
$W^{2,q}_{\text{per}}(\T^N)$ for all $1<q<+\infty$ and additionally\be\forall R>0\quad \exists C_R>0\quad \forall p\in[-R,R]\quad ||v(\cdot,p)||_{W^{2,q}(\T^N)}\leq C_R.\label{estW1q}\ee 
\item Growth property: if $p>p'$, then
$$
v(y,p)>v(y,p')\quad\forall y\in \T^N.
$$

\item Behaviour at infinity: assume that 
\be
\sup_{v\in\R}\left\| \p_v A(\cdot, v) \right\|_{L^\infty(\T^N)}<+\infty.\label{hyp:unif_lip}
\ee
Then 
$$
\lim_{p\to-\infty} \sup_{y\in \T^N} v(y,p) = - \infty,\quad
\lim_{p\to+\infty} \inf_{y\in \T^N} v(y,p) = + \infty.
$$

\end{enumerate}

\label{prop:cellpb}\end{prop}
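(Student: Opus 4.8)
The proposition is the cell-problem result of \cite{homogpara}; the line of attack I would follow is: existence by a Leray--Schauder fixed-point argument, regularity by elliptic bootstrap, and then uniqueness, the growth property, and the asymptotics all read off from the equation satisfied by the difference of two solutions, together with the Harnack inequality.

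\emph{Existence and regularity.} Fix $p\in\R$ and look for $v=p+w$ with $\mean{w}=0$. For $\bar w\in\mathcal C^\alpha(\T^N)$ with $\mean{\bar w}=0$, let $\Phi(\bar w)$ be the unique $w\in\HP$ with $\mean{w}=0$ solving $-\Delta_y w=-\dv_y[A(\cdot,p+\bar w)]$; this is well posed, the right-hand side being a distribution of zero mean on $\T^N$ and $A(\cdot,p+\bar w)$ being bounded (since $\bar w\in L^\infty$ and $A\in W^{1,\infty}_{\text{loc}}$), and elliptic regularity makes $\Phi$ a continuous compact self-map of $\{u\in\mathcal C^\alpha(\T^N):\mean{u}=0\}$. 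A fixed point of $\Phi$ solves \eqref{cell}, so by the Leray--Schauder theorem it suffices to bound, uniformly in $t\in[0,1]$, every $w$ with $w=t\,\Phi(w)$. I expect this a priori bound to be the main obstacle, and it is exactly where \eqref{hyp:ex_cell} is used: multiplying the equation by $w$ controls $\|\nabla_y v\|_{L^2}$ by lower-order quantities, after which one bootstraps with \eqref{polygrowthai}--\eqref{polygrowthaN}; when $m=0$ or $n<1$ the nonlinearity is subcritical and the iteration closes directly, whereas in the remaining case one first notes that $\dv_y A(\cdot,p_0)=0$ makes $v\equiv p_0$ a solution of \eqref{cell} at $p=p_0$, recentres via $v=p_0+w'$ so that $A(\cdot,p_0+w')-A(\cdot,p_0)$ vanishes at $w'=0$, and then runs a Moser/Stampacchia iteration. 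Applying the same bootstrap to a genuine solution gives $v(\cdot,p)\in W^{2,q}_{\text{per}}(\T^N)$ for every $q<\infty$, and keeping track of the constants through a bound $\|v(\cdot,p)\|_{L^\infty}\le C_R$ valid on $|p|\le R$ yields \eqref{estW1q}; in particular $v(\cdot,p)\in\mathcal C(\T^N)$, so the pointwise statements in (ii)--(iii) are meaningful.

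\emph{Uniqueness and monotonicity.} Given two solutions $v_1,v_2$, put $w=v_1-v_2$ and $a(y)=\int_0^1\p_p A\big(y,v_2(y)+\theta w(y)\big)\,d\theta$; then $a\in L^\infty(\T^N)^N$ by \eqref{polygrowthai} (as $v_i\in L^\infty$), and $w$ solves the divergence-form equation $-\dv_y(\nabla_y w-a\,w)=0$ on $\T^N$ with $\mean{w}=\mean{v_1}-\mean{v_2}$. Its formal adjoint $L^\ast\phi=-\Delta_y\phi-a\cdot\nabla_y\phi$ has no zeroth-order term, so $L^\ast 1=0$, and the strong maximum principle on the connected torus forces any solution of $L^\ast\phi=0$ to be constant; hence $0$ is the principal eigenvalue of $L^\ast$, and therefore of the operator $L:=-\dv_y(\nabla_y\cdot-a\,\cdot)$, so by Krein--Rutman $\ker L=\R\varphi$ with $\varphi>0$. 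Thus $w=c\,\varphi$ with $c\,\mean{\varphi}=\mean{w}$: if $\mean{v_1}=\mean{v_2}$ then $c=0$ and $v_1=v_2$ (uniqueness), while if $v_1=v(\cdot,p)$, $v_2=v(\cdot,p')$ with $p>p'$ then $c>0$, so $w>0$ everywhere, which is (ii).

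\emph{Behaviour at infinity.} By (ii), $p\mapsto\inf_{\T^N}v(\cdot,p)$ is increasing, hence has a limit $\ell_+\in(-\infty,+\infty]$ as $p\to+\infty$; suppose for contradiction that $\ell_+<\infty$. Pick $p_k\to+\infty$ with $c_k:=\inf_{\T^N}v(\cdot,p_k)$ satisfying $|c_k|\le M$, and set $w_k=v(\cdot,p_k)-c_k\ge 0$, so $\inf_{\T^N}w_k=0$ and $\mean{w_k}=p_k-c_k\to+\infty$. Writing $A(y,c_k+w_k)=\big(A(y,c_k+w_k)-A(y,c_k)\big)+A(y,c_k)$ gives $-\dv_y(\nabla_y w_k-b_k\,w_k)=-(\dv_y A)(\cdot,c_k)$ with $b_k(y)=\int_0^1\p_p A(y,c_k+\theta w_k(y))\,d\theta$; by \eqref{hyp:unif_lip} the $b_k$ are uniformly bounded, and since $|c_k|\le M$, \eqref{polygrowthaN} gives $\|(\dv_y A)(\cdot,c_k)\|_{L^\infty}\le C(1+M^n)$ uniformly. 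The Harnack inequality on the compact manifold $\T^N$ (divergence form, uniform coefficients and right-hand side) then yields $\mean{w_k}\le\sup_{\T^N}w_k\le C\big(\inf_{\T^N}w_k+\|(\dv_y A)(\cdot,c_k)\|_{L^q}\big)\le C'$, contradicting $\mean{w_k}\to+\infty$; hence $\inf_{\T^N}v(\cdot,p)\to+\infty$. The statement at $-\infty$ follows by the same argument applied to $\sup_{\T^N}v(\cdot,p)-v(\cdot,p)$ (equivalently, by replacing $(A,v)$ with $(-A,-v)$). Aside from the existence estimate, the only delicate point here is making sure the coefficients fed into Harnack are genuinely uniform in $k$, which is precisely what \eqref{hyp:unif_lip} buys.
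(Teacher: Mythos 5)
You should first note that the paper itself does not prove Proposition \ref{prop:cellpb}: it is recalled verbatim from \cite{homogpara}, so your proposal can only be judged against that reference and on its own merits. With that caveat, your uniqueness and monotonicity argument is sound: writing $v_1-v_2$ as an element of the kernel of the linearized divergence-form operator, observing that the adjoint $-\Delta-a\cdot\nabla$ kills constants, and invoking Fredholm plus Krein--Rutman to get a one-dimensional kernel spanned by a positive function is a correct route, and it is consistent with the toolkit the paper itself uses (cf.\ Remark \ref{rem:linear}, where the positivity of the invariant measure $m$ is attributed to Krein--Rutman). Your Harnack-based contradiction for part (iii) is also correct: the drifts $b_k$ are uniformly bounded exactly because of \eqref{hyp:unif_lip}, the sources $(\dv_y A)(\cdot,c_k)$ are uniformly bounded because the $c_k$ stay in a compact set, and the global Harnack inequality on $\T^N$ then bounds $\mean{w_k}=p_k-c_k$, contradicting $p_k\to+\infty$.

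The genuine gap is in the existence/regularity part, which is the technical core of the statement and the only place where \eqref{polygrowthai}, \eqref{polygrowthaN} and the trichotomy \eqref{hyp:ex_cell} actually act. You correctly reduce Leray--Schauder to an a priori bound, but then only assert that ``when $m=0$ or $n<1$ the iteration closes directly'' and that otherwise recentering at $p_0$ plus a Moser/Stampacchia iteration works; none of this is carried out. Concretely, multiplying the equation by $v-p$ and using the primitive $\Theta(y,s)=\int_0^s A(y,\sigma)\,d\sigma$ gives $\|\nabla v\|_{L^2}^2\leq C\int_{\T^N}\bigl(|v|+|v|^{n+1}\bigr)$, but one has no a priori control of any norm of $v-p$ beyond its mean, so absorbing the right-hand side is not automatic; this is precisely where the thresholds $n<1$, $n<(N+2)/N$, $n<(N+2)/(N-2)$ and the alternative $m=0$ must be used, where the three cases of \eqref{hyp:ex_cell} are treated separately in \cite{homogpara}, and where the uniformity in $p\in[-R,R]$ (and in the homotopy parameter) needed for \eqref{estW1q} has to be tracked. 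The multiply-by-$|v|^{q-1}v$ estimates echoed in Section \ref{sec:bounds} of the present paper indicate the right scheme, so your strategy is plausible, but as written this part is an outline rather than a proof of existence and of \eqref{estW1q}.
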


\subsection{\textit{A priori} bounds for solutions of scalar conservation laws}

Our first result is concerned with the derivation of \textit{a priori} bounds in $L^\infty$ which are uniform in time. Notice that such a result is not trivial  in general: in the homogeneous case, that is, when the flux $A$ does not depend on the space variable $x$, this result follows from the comparison principle stated earlier. However, in the present case, this argument does not hold, since constants are not stationary solutions of \eqref{eq:evol_per}. Of course, if there exists a constant $C$ such that $u_0\leq v(\cdot, C)$, then the comparison principle entails that $S_t u_0 \leq v(\cdot, C).$ Hence, the derivation of \textit{a priori} bounds is easy when  the initial data is bounded from above and below by solutions of equation \eqref{cell}. Consequently, the goal of this paragraph is to present similar results when the initial data does not satisfy such an assumption.

\begin{prop}
Assume that the flux $A$ satisfies the assumptions of Proposition \ref{prop:cellpb}. Assume also that for all $K>0$, there exists a positive constant $C_K$, such that for all $v\in[-K,K]$, for all $w\in\R$,
\be
 \begin{aligned}
\left| \dv_y A(y,v+ w)- \dv_y A(y,v)  \right|\leq C_K (|w| + |w|^n),\\
\left| \p_v A(y,v+ w)- \p_v A(y,v)  \right|\leq C_K (|w| + |w|^n),
 \end{aligned}
\label{hyp:A1}\ee
where $n<(N+2)/N$.

Let $u_0\in L^\infty(Q)$, and assume that there exists a stationary solution $U_0\in W^{1,\infty}(Q)$ of \eqref{eq:evol_per} such that $u_0\in U_0 + L^1(Q).$

Then  
$$
\sup_{t\geq 0}\| S_t u_0\|_{L^\infty(Q)}<+\infty.
$$
\label{prop:bounds}

\end{prop}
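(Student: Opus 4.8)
The plan is to exploit the Duhamel/comparison structure of the equation to reduce the $L^\infty$ bound to an $L^1$ estimate, which is under control thanks to the Contraction property. Writing $u(t)=S_t u_0$ and $U_0$ for the stationary solution with $u_0-U_0\in L^1(Q)$, set $w=u-U_0$. Since $U_0$ solves the stationary equation, $w$ satisfies $\partial_t w -\Delta_y w = -\dv_y\bigl(A(y,U_0+w)-A(y,U_0)\bigr)$, with $w(0)=u_0-U_0\in L^1(Q)$. The Contraction and Conservation properties give directly that $\|w(t)\|_{L^1(Q)}\le \|w(0)\|_{L^1(Q)}$ for all $t\ge 0$; this is the only global-in-time input we have, and everything must be bootstrapped from it.

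Next I would run a parabolic smoothing / interpolation argument on the Duhamel formula
\[
w(t)=e^{t\Delta}w(t-1)-\int_{t-1}^t \nabla e^{(t-s)\Delta}\cdot\bigl(A(y,U_0+w)-A(y,U_0)\bigr)(s)\,ds,
\]
valid for $t\ge 1$ (for $t\le 1$ the bound is classical since $u_0\in L^\infty$). Using hypothesis \eqref{hyp:A1}, the nonlinearity is bounded pointwise by $C_K(|w|+|w|^n)$ as long as $U_0$ stays in a fixed compact $[-K,K]$ — which it does, since $U_0\in W^{1,\infty}(Q)$. The heat kernel bounds $\|e^{t\Delta}f\|_{L^q}\lesssim t^{-\frac N2(\frac1r-\frac1q)}\|f\|_{L^r}$ and $\|\nabla e^{t\Delta}f\|_{L^q}\lesssim t^{-\frac12-\frac N2(\frac1r-\frac1q)}\|f\|_{L^r}$ then let me set up a bootstrap on the quantity $M(t):=\sup_{t-1\le s\le t}\|w(s)\|_{L^\infty(Q)}$: starting from the $L^1$ bound one gains integrability in finitely many steps, the exponent $n<(N+2)/N$ being exactly what makes each application of Duhamel improve (or preserve) the Lebesgue exponent without the time singularity becoming non-integrable, and without the $L^1$-norm multiplying a growing power of $M$. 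The outcome is an inequality of the form $M(t)\le C\bigl(1+M(t)^\theta\bigr)$ with $\theta<1$ coming from splitting $|w|^n$ via Hölder against the conserved $L^1$ mass, which forces $M(t)\le C$ uniformly in $t$.

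The main obstacle is the interplay between the criticality exponent $n$ and the fact that $w$ is only controlled in $L^1$, not in any higher norm, uniformly in time. One cannot simply iterate $L^\infty\to L^\infty$; the gain must come from interpolating the nonlinear term $|w|^n$ between $L^1(Q)$ (bounded for all time) and $L^\infty(Q)$ (the quantity being estimated), and then checking that the resulting self-referential estimate for $M(t)$ is subcritical, i.e. the power of $M(t)$ on the right is strictly less than $1$ and the time integral $\int_0^1 (t-s)^{-\frac12-\frac N2(1-\frac1q)}\,ds$ converges at each stage. This is precisely where $n<(N+2)/N$ is used, and on the torus the argument is cleaner because Poincar\'e-type inequalities and the compactness of $\T^N$ remove low-frequency issues; on $Q=\R$ ($N=1$) one uses instead that $w(t)\in L^1(\R)$ with conserved mass together with the full-line heat kernel decay. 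I would also need the local-in-time bound $\sup_{0\le t\le 1}\|S_tu_0\|_{L^\infty}<\infty$, which is standard for bounded initial data by Kru\v zkov's theory, to start the iteration. Once $M(t)$ is bounded on $[1,\infty)$ and on $[0,1]$, the conclusion $\sup_{t\ge0}\|S_tu_0\|_{L^\infty(Q)}<\infty$ follows by combining with $\|U_0\|_{L^\infty(Q)}<\infty$.
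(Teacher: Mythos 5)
There is a genuine gap, and it sits at the centre of your bootstrap. Once you move the derivative onto the heat kernel, the quantity you must estimate in $L^q$ is the flux difference itself, $B(y,w)=A(y,U_0(y)+w)-A(y,U_0(y))$. You claim \eqref{hyp:A1} gives the pointwise bound $|B|\leq C_K(|w|+|w|^n)$, but \eqref{hyp:A1} only controls $\dv_y$ and $\p_v$ of this difference, not the difference itself: writing $B(y,w)=\int_0^w \p_v A(y,U_0(y)+\xi)\,d\xi$ and using the second line of \eqref{hyp:A1} yields only $|B(y,w)|\leq C\left(|w|+|w|^2+|w|^{n+1}\right)$, i.e. one full power of $w$ more than you assumed. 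With the correct bound your ``subcritical'' self-referential estimate does not close in the stated range: estimating $\|\nabla e^{(t-s)\Delta}B(\cdot,w(s))\|_{L^{p'}}\lesssim (t-s)^{-\frac12-\frac N2(\frac1r-\frac1{p'})}\|B(\cdot,w(s))\|_{L^{r}}$ and feeding in a uniform $L^p$ bound forces $r(n+1)\leq p$ together with $\frac1r-\frac1{p'}<\frac1N$, so the integrability index can be raised only when $p>nN$; starting from the sole global input $p=1$ this requires $n<1/N$, far below the hypothesis $n<(N+2)/N$. Even granting your (unjustified) bound $|B|\lesssim|w|+|w|^n$, the same bookkeeping needs $n<1+1/N$, still short of the full range. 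So the assertion that ``$n<(N+2)/N$ is exactly what makes each application of Duhamel improve'' is not substantiated, and the heat-kernel iteration from the $L^1$ bound alone does not reach the stated hypothesis.

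The paper's proof is designed precisely to avoid a pointwise bound on $B$: multiplying the equation for $w$ by $w|w|^{q-1}$ and integrating by parts via the primitive $b_q(y,w)=q\int_0^w B(y,w')|w'|^{q-1}\,dw'$, only $(\dv_y B)(y,w')$ survives, and this \emph{is} controlled by $C(|w'|+|w'|^n)$ thanks to both lines of \eqref{hyp:A1} (the second line is needed because $\dv_y B$ contains $\nabla_y U_0\cdot\left[\p_v A(y,U_0+w')-\p_v A(y,U_0)\right]$, which is also where $U_0\in W^{1,\infty}$ enters). Keeping the dissipation $\int\left|\nabla|w|^{\frac{q+1}2}\right|^2$ and combining Sobolev--Poincar\'e (or Gagliardo--Nirenberg for $N=1$ on $\R$) inequalities with interpolation against the conserved $L^1$ norm gives a Gronwall inequality implying $w\in L^\infty_t L^{2q}\,$ whenever $w\in L^\infty_t L^{q}$, hence uniform $L^q$ bounds for all finite $q$; the passage to $L^\infty$ (and H\"older) bounds is then done by local parabolic regularity (Theorems 8.1 and 10.1, Chapter III of \cite{ladypara}) applied to the equation with coefficient $b(t,y)=\int_0^1\p_v A(y,U_0+\tau w)\,d\tau$, bounded in $L^\infty_t L^q_{\mathrm{loc}}$. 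It is the use of the dissipation term --- which your Duhamel scheme discards --- that buys the full exponent range $n<(N+2)/N$ (and $n<3$ when $N=1$); any semigroup reformulation would have to recover that gain.
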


Notice that in the above proposition, we do not assume that the stationary solution $U_0$ is periodic. Thus $U_0$ is not necessarily a solution of equation \eqref{cell}, and may be, for instance, a viscous shock profile (see Proposition \ref{prop:ex_shock} below). In the periodic case, any function $u_0\in L^\infty$ is such that $u_0 - v(\cdot, 0)\in L^1(\T^N)$, and thus the result holds for all functions in $L^\infty$.

\subsection{Stability of stationary periodic solutions in the periodic case}

The derivation of uniform \textit{a priori} bounds for the solutions of equation \eqref{eq:evol_per} allows us to extend the stability results proved in \cite{initiallayer} to general classes of initial data. Let us first recall the stability result of \cite{initiallayer}:

\begin{prop}
Assume that the flux $A$ satisfies the assumptions of Proposition \ref{prop:cellpb}. Let $u_0\in L^\infty(\T^N)$ such that there exists $\beta_1,\beta_2\in\R$ such that
\be
v(\cdot, \beta_1)\leq u_0\leq v(\cdot, \beta_2).\label{hyp:initiallayer}
\ee

Then as $t\to\infty$
$$
S_t u_0 \to v(\cdot, \mean{u_0})\quad\text{in } L^\infty(\T^N).
$$

\label{prop:initiallayer}
\end{prop}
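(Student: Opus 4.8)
The plan is to exploit the three Co-properties (comparison, contraction, conservation) together with the fact that on $\T^N$ every $L^\infty$ function is, up to an $L^1$ perturbation, comparable to a whole family of stationary solutions. The key point to extract from \eqref{hyp:initiallayer} is that it is a \emph{stable} hypothesis: if $v(\cdot,\beta_1)\leq u_0\leq v(\cdot,\beta_2)$, then by the comparison principle and the fact that $v(\cdot,\beta_i)$ are stationary, $v(\cdot,\beta_1)\leq S_t u_0\leq v(\cdot,\beta_2)$ for all $t\geq 0$, so the trajectory stays trapped in the order interval $[v(\cdot,\beta_1),v(\cdot,\beta_2)]$. This is exactly the regime handled in \cite{initiallayer}, so I would only need to recall how that argument runs.

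First I would use the parabolic regularity of \eqref{eq:evol_per}: since $S_t u_0$ is uniformly bounded in $L^\infty$ (trapped between the two stationary profiles), parabolic estimates — in the H\"older spaces $H^{\alpha/2,\alpha}$ introduced above, via \cite{ladypara} — give uniform bounds on $\{S_t u_0: t\geq 1\}$ in a space compactly embedded in $L^\infty(\T^N)$. Hence the $\omega$-limit set $\omega(u_0)$ is nonempty and compact in $L^\infty(\T^N)$, and it consists of entire (eternal) solutions of \eqref{eq:evol_per} still trapped in $[v(\cdot,\beta_1),v(\cdot,\beta_2)]$. Second, I would introduce the Lyapunov-type functional coming from the $L^1$-contraction: for any fixed $p\in[\beta_1,\beta_2]$, the map $t\mapsto \|S_t u_0 - v(\cdot,p)\|_{L^1(\T^N)}$ is nonincreasing (contraction applied to $u_0$ and the stationary solution $v(\cdot,p)$), hence converges. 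This is the dynamical-systems input of Osher--Ralston type alluded to in the introduction. Third, by conservation, $\mean{S_t u_0}=\mean{u_0}$ for all $t$, so every element of $\omega(u_0)$ has mean $\mean{u_0}$; in particular $v(\cdot,\mean{u_0})$ is the only stationary solution of \eqref{cell} with the right mass (uniqueness in Proposition \ref{prop:cellpb}).

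The crux is then a LaSalle-invariance argument: take $w\in\omega(u_0)$. Since the distance $\|S_t u_0 - v(\cdot,p)\|_{L^1}$ converges for \emph{every} $p$, and $w$ is itself obtained as a limit along a time sequence, the function $t\mapsto \|S_t w - v(\cdot,p)\|_{L^1}$ must be constant on $\omega(u_0)$. But strict contraction fails to be strict only in degenerate situations; here one shows that if $\|S_t w - v(\cdot,p)\|_{L^1}$ is constant in $t$ then $w$ must itself be a stationary solution — this uses that $w$ and $v(\cdot,p)$ are ordered-comparable on a set of positive measure and the strong maximum principle forces the difference to not change sign, combined with the entropy dissipation. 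So $\omega(u_0)$ consists only of stationary solutions of \eqref{cell}; since they all have mean $\mean{u_0}$, uniqueness gives $\omega(u_0)=\{v(\cdot,\mean{u_0})\}$, and compactness of the trajectory upgrades this to convergence $S_t u_0\to v(\cdot,\mean{u_0})$ in $L^\infty(\T^N)$.

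The main obstacle is the rigidity step: showing that constancy of $t\mapsto\|S_t w - v(\cdot,p)\|_{L^1(\T^N)}$ along an eternal trajectory forces $w$ to be stationary. In the homogeneous case this is classical, but here the flux depends on $y$ and one must argue carefully — presumably by writing the $L^1$ norm as $\int \sgn(w - v(\cdot,p))(w-v(\cdot,p))$, differentiating in time, and identifying the dissipation term, which vanishes only if $\nabla(w - v(\cdot,p))=0$ a.e. on suitable sets, and then bootstrapping. Since this is precisely the content of \cite{initiallayer}, I would simply cite that argument rather than reproduce it, and concentrate the exposition on checking that all its hypotheses are met under the assumptions of Proposition \ref{prop:cellpb}.
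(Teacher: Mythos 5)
Your overall scheme (trapping between $v(\cdot,\beta_1)$ and $v(\cdot,\beta_2)$ by comparison, compactness of the trajectory by parabolic regularity, $\omega$-limit set, $L^1$-contraction Lyapunov functionals, conservation of mass to identify the limit) is viable on the torus, but it is not the argument behind Proposition \ref{prop:initiallayer}: the paper simply quotes this result from \cite{initiallayer}, and the version of that proof it reproduces (proof of Proposition \ref{prop:conv_sup}, Section \ref{sec:per-stab}) is a sub-/super-solution argument, not an Osher--Ralston one. There one sets $U(t,y)=\sup_{t'\geq t}u(t',y)$, introduces $p^*(t)=\inf\{p:\ v(\cdot,p)\geq U(t,\cdot)\}$, shows $p^*$ is nonincreasing and bounded, hence convergent, applies Harnack's inequality to the nonnegative solution $v(\cdot,p^*(t_0))-u(t_1+\cdot,\cdot)$ to obtain convergence along a time sequence, and concludes with the $L^1$ contraction, parabolic regularity and conservation of mass. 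Your route is instead the dynamical-systems scheme the paper uses for shock stability in Section \ref{sec:shock_stab:1}, so it is a genuinely different (and in principle workable) path.

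However, as written your crux step has a real gap. Constancy of $t\mapsto\|S_t w-v(\cdot,p)\|_{L^1(\T^N)}$ does not force $w$ to be stationary: if, say, $w\geq v(\cdot,p)$ pointwise, then by the Conservation property this quantity equals $\mean{w}-p$ for all $t$, for an arbitrary trajectory. Moreover the dissipation responsible for strict decay of the $L^1$ distance is not a gradient term: in the Kruzhkov computation the decay is produced where $S_t w-v(\cdot,p)$ changes sign, so the criterion ``$\nabla(w-v(\cdot,p))=0$ a.e.\ on suitable sets'' is the wrong mechanism. The step can be repaired exactly as in Section \ref{sec:shock_stab:1}: take $p=\mean{u_0}$, so that by conservation every $W_0\in\omega(u_0)$ satisfies $W_0-v(\cdot,\mean{u_0})\in L^1_0(\T^N)$; constancy of the Lyapunov functional on the $\omega$-limit set makes the entropy defect vanish (cf.\ Lemma \ref{lem:tech}), so $|S_tW_0-v(\cdot,\mean{u_0})|$ is a nonnegative solution of a linear parabolic equation; since it is continuous with zero mean it vanishes somewhere on the compact torus, and Harnack's inequality then forces it to vanish identically, yielding $\omega(u_0)=\{v(\cdot,\mean{u_0})\}$ directly, with no intermediate ``stationarity'' claim. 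Since you both misstate this key mechanism and defer it to the citation, the proposal does not stand on its own as a proof.
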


It was also proved in \cite{initiallayer} that under additional regularity assumptions on the flux $A$, the speed of convergence is exponential, due to a spectral gap result.

We now remove assumption \eqref{hyp:initiallayer} thanks to Proposition \ref{prop:bounds}:

\begin{thm}
Assume that the flux $A$ satisfies the assumptions of Proposition \ref{prop:cellpb}, together with \eqref{hyp:A1}. Then for all $u_0\in L^\infty(\T^N)$, as $t\to\infty$,
$$
S_t u_0 \to v(\cdot, \mean{u_0})\quad\text{in } L^\infty(\T^N).
$$

\label{thm:lgtime}

\end{thm}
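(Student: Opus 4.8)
The plan is to bootstrap from Proposition \ref{prop:initiallayer} using the uniform $L^\infty$ bound provided by Proposition \ref{prop:bounds}. First I would check that the hypotheses of Proposition \ref{prop:bounds} are met: since $Q=\T^N$ is compact, $u_0\in L^\infty(\T^N)$ automatically satisfies $u_0 - v(\cdot,0)\in L^1(\T^N)$ with $U_0 := v(\cdot,0)$ a stationary solution, and \eqref{hyp:A1} is assumed. Hence there is a constant $M>0$ with $\sup_{t\geq 0}\|S_t u_0\|_{L^\infty(\T^N)}\leq M$. The key consequence is that for any fixed $s>0$, the function $u_1 := S_s u_0$ satisfies $\|u_1\|_{L^\infty}\leq M$, so by the behaviour-at-infinity statement of Proposition \ref{prop:cellpb}(iii) — or more elementarily since $v(\cdot,p)\to\pm\infty$ uniformly as $p\to\pm\infty$ — one can pick $\beta_1,\beta_2\in\R$ with $v(\cdot,\beta_1)\leq -M\leq u_1\leq M\leq v(\cdot,\beta_2)$.

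Thus $u_1$ satisfies the bracketing hypothesis \eqref{hyp:initiallayer}, and Proposition \ref{prop:initiallayer} applies to $u_1$: as $t\to\infty$, $S_t u_1\to v(\cdot,\mean{u_1})$ in $L^\infty(\T^N)$. By the semi-group property $S_t u_1 = S_{t+s}u_0$, and by the Conservation property applied to the pair $u_0$ and $v(\cdot,0)$ (both in $L^\infty(\T^N)$, differing by an $L^1$ function) we get $\mean{S_s u_0} = \mean{u_0}$ for every $s$, so $\mean{u_1}=\mean{u_0}$. Therefore $S_{t}u_0 \to v(\cdot,\mean{u_0})$ in $L^\infty(\T^N)$ as $t\to\infty$, which is exactly the claim. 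Strictly speaking one needs the regularizing effect of the parabolic equation to know $S_s u_0$ is not merely $L^\infty$ but continuous (so that it is a genuine admissible initial datum for Proposition \ref{prop:initiallayer}); this follows from standard parabolic regularity under the growth assumptions on $A$, via the $H^{\alpha/2,\alpha}$ estimates recalled in the introduction, and I would invoke it as known.

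There is essentially no serious obstacle here once Proposition \ref{prop:bounds} is in hand — the argument is a short reduction. The one point requiring a little care is making sure the time-shifted datum $S_s u_0$ legitimately feeds into Proposition \ref{prop:initiallayer}, i.e. that $v(\cdot,\beta_1)\le S_s u_0\le v(\cdot,\beta_2)$ holds for suitable constants; this is where the uniform-in-time bound $M$ (rather than a merely finite bound on each slice) is used, together with monotonicity and the range of $p\mapsto v(\cdot,p)$ from Proposition \ref{prop:cellpb}. Everything else is the semi-group identity $S_{t+s}=S_t S_s$ and conservation of mass. The real content of the theorem is therefore concentrated entirely in Proposition \ref{prop:bounds}, whose proof is where the work with assumption \eqref{hyp:A1} and the subcritical exponent $n<(N+2)/N$ genuinely enters.
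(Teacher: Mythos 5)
There is a genuine gap at the bracketing step. You choose $\beta_1,\beta_2$ with $v(\cdot,\beta_1)\leq -M$ and $M\leq v(\cdot,\beta_2)$ by invoking the behaviour-at-infinity statement, but that is part \textbf{(iii)} of Proposition \ref{prop:cellpb}, which is proved only under the \emph{additional} hypothesis \eqref{hyp:unif_lip} ($\p_v A$ uniformly bounded); this hypothesis is not among the assumptions of Theorem \ref{thm:lgtime}, and it does not follow from \eqref{hyp:A1}, which allows $\p_v A$ to grow like $|w|^n$. Without \eqref{hyp:unif_lip}, all one knows is $\mean{v(\cdot,p)}=p$ and monotonicity in $p$: the oscillation of $v(\cdot,p)$ around its mean may grow with $|p|$, so $\sup_y v(y,p)$ need not tend to $-\infty$ as $p\to-\infty$, and the constants $\beta_1,\beta_2$ you need may simply not exist. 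Note also that if this bracketing were available under the stated hypotheses, the theorem would follow from Proposition \ref{prop:initiallayer} applied directly at $t=0$ with $M=\|u_0\|_\infty$, so Proposition \ref{prop:bounds} would be superfluous; your use of the uniform-in-time bound on a fixed time slice is not a genuine use of it. The fact that your argument bypasses the \textit{a priori} bound is a symptom that it is proving the theorem under a stronger, unstated assumption.

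The paper's route is designed exactly to avoid this obstacle: it first proves a one-sided statement (Proposition \ref{prop:conv_sup} and Corollary \ref{cor:conv_inf}), namely convergence for data bounded above (or below) by a single stationary solution $v(\cdot,p_0)$; this is where Proposition \ref{prop:bounds} is genuinely used, since the missing side of the bracket is replaced by the uniform-in-time $L^\infty$ bound, via the monotone envelope $U(t,y)=\sup_{t'\geq t}u(t',y)$, the associated nonincreasing level $p^*(t)$, and Harnack's inequality. For arbitrary $u_0\in L^\infty(\T^N)$ one then sets $\tilde u_0=\inf(u_0,v(\cdot,p))$, which is automatically dominated by $v(\cdot,p)$; Proposition \ref{prop:conv_sup} gives $S_t\tilde u_0\to v(\cdot,\mean{\tilde u_0})$, the comparison principle yields $S_t u_0\geq v(\cdot,\mean{\tilde u_0}-1)$ for $t$ large, and Corollary \ref{cor:conv_inf} together with conservation of mass and the semi-group property concludes. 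If you want to salvage your argument as written, you must either add \eqref{hyp:unif_lip} to the hypotheses (proving a weaker theorem) or replace the two-sided bracketing by a one-sided reduction of the above type.
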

The proof of this result relies mainly on Proposition  \ref{prop:bounds} and on sub- and super-solution methods based on the Comparison principle. Once again, it can be proved that the speed of convergence is exponential, provided the flux $A$ is sufficiently smooth. For details regarding that point, we refer to \cite{initiallayer}.

\subsection{Existence of viscous shocks}

We now consider equation \eqref{eq:evol_per} set in $Q=\R$. Our goal here is to prove the stability of a special class  of stationary solutions, called ``standing shocks''. By analogy with the definition in \cite{SerreHandbook} of shocks in homogeneous conservation laws, a \textit{standing shock} is a stationary solution $U$ of equation \eqref{eq:evol_per} which is asymptotic to solutions of equation \eqref{cell} at infinity, namely
$$
\exists (p_l,p_r)\in\R^2,\quad \lim_{y\to-\infty}(U(y) - v(y,p_l))=0,\ \lim_{y\to+\infty}(U(y) - v(y,p_r))=0.
$$

Because of the spatial dependence of the flux $A$, it does not seem to be possible to restrict the study of general shocks to standing shocks. For that matter, we wish to emphasize that the definition of a viscous shock with non-zero speed should not be exactly the same as in \cite{SerreHandbook}; indeed, it can be easily checked that if 
$$
u(t,x)=U(x-st)
$$
is a solution of \eqref{eq:evol_per}, then $s=0$ necessarily. Thus, for $s\neq 0$, a shock profile is a solution of \eqref{eq:evol_per} of the form $$
u(t,x)=U(t,x-st),
$$
where for all $t$, $U(t)$ is asymptotic to solutions of equation \eqref{cell} at infinity. This is related (although not equivalent to) the definition of traveling pulsating fronts, see for instance the paper of Xue Xin \cite{XXin} The existence of non-stationary shock profiles and their stability is beyond the scope of this paper, and thus, we will focus on standing shocks from now on, sometimes omitting the word ``standing''.

Our first result is concerned with the existence of viscous shocks.

\begin{prop}[Existence of stationary shock profiles]
Assume that there exists $p^-, p^+\in\R$  such that $p^-<p^+$ and
\be
\bar A(p^+)=\bar A(p^-):=\alpha,
\label{hyp:plpr}
\ee
and define $v_\pm:=v (\cdot, p^\pm).$

Let $u_0\in \R$ such that
$$
v_-(0)<u_0< v_+(0),
$$
and let $u:I\to \R$ be the maximal solution of the differential equation
\begin{eqnarray}
 \label{eq:visc-shock}&&u' (x)= A(x,u(x)) - \alpha,\\
&&u_{|x=0}=u_0. \label{CI:visc-shock}
\end{eqnarray}
Then $u$ satisfies the following properties:
\begin{enumerate}[\bf (i)]
\item The function $u$ is a global solution of \eqref{eq:visc-shock}; in other words, $I=\R$.
\item For all $x\in \R$,
$$
v_-(x) \leq u(x) \leq v_+(x);
$$

\item There exist $q_l, q_r\in \R$ such that $q_l\neq q_r$, $\bar A(q_l)=\bar A(q_r)=\alpha$, and
$$
\lim_{x\to-\infty}(u(x) - v(x, q_l))=0,\quad\lim_{x\to+\infty}(u(x) - v(x, q_r))=0.
$$

\end{enumerate}
As a consequence, the solution $u$ of \eqref{eq:visc-shock}-\eqref{CI:visc-shock} is a stationary viscous shock profile of equation \eqref{eq:evol_per}.

\label{prop:ex_shock}

\end{prop}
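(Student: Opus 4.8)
The plan is to treat the ODE \eqref{eq:visc-shock} as a scalar autonomous-in-disguise problem and to exploit the fact that $v_-$ and $v_+$ are themselves solutions of the \emph{same} ODE. Indeed, since $v_\pm = v(\cdot,p^\pm)$ solves $-\Delta_y v + \dv_y A(y,v)=0$, which in dimension one reads $-v'' + (A(y,v))'=0$, one integrates once to get $v_\pm'(x) = A(x,v_\pm(x)) - c_\pm$ for some constant $c_\pm$; averaging over a period and using that $v_\pm$ is periodic forces $c_\pm = \bar A(p^\pm) = \alpha$. Hence $v_-$ and $v_+$ are two distinct solutions of the \emph{very same} scalar ODE $u'(x) = A(x,u(x)) - \alpha$, and the hypothesis $v_-(0) < u_0 < v_+(0)$ says the initial datum is trapped strictly between them. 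The first step is therefore a comparison/uniqueness argument for this scalar ODE: because $A(x,\cdot)$ is locally Lipschitz (from $A\in W^{1,\infty}_{\mathrm{loc}}$), solutions cannot cross, so $v_-(x) < u(x) < v_+(x)$ for every $x$ in the interval of existence. This immediately gives (ii) on $I$, and combined with the local boundedness it forces $I=\R$: the solution stays in the compact tube between $v_-$ and $v_+$ (which are globally bounded, being continuous and periodic), so it cannot blow up in finite time, giving (i).

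The heart of the matter is (iii), the convergence at $\pm\infty$. Here the idea is to consider, for each $h\in\Z$, the shifted function $u(\cdot + h)$; since $A$ is $1$-periodic in its first variable, $u(\cdot+h)$ solves the same ODE with a different (still admissible, i.e.\ strictly between $v_-$ and $v_+$) initial condition. I would introduce the Poincaré-type map $T\colon (v_-(0),v_+(0)) \to (v_-(0),v_+(0))$, $T(a) = w(1)$ where $w$ solves the ODE with $w(0)=a$; this is well-defined and, by the non-crossing property, strictly increasing, continuous, and maps the open interval into itself with no fixed point escaping to the endpoints. A bounded monotone orbit $a_k := u(k)$ of a continuous increasing interval map converges to a fixed point $a_\infty$ of $T$, i.e.\ to the initial value of a genuine \emph{periodic} solution $V$ of the ODE. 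But a $1$-periodic solution of $u' = A(x,u)-\alpha$ with $\langle V\rangle =: q$ solves \eqref{cell}, so $V = v(\cdot,q)$ and necessarily $\bar A(q) = \alpha$; and $u(x) - v(x,q) \to 0$ as $x\to+\infty$ by continuous dependence on initial data over one period together with $a_k \to a_\infty$. The same argument run backwards ($k\to-\infty$) produces $q_l$ with $\bar A(q_l)=\alpha$ and convergence at $-\infty$. Finally $q_l\neq q_r$: if they were equal then $u \equiv v(\cdot,q_l)$ by ODE uniqueness, contradicting, say, $u_0 < v_+(0) = v(0,p^+)$ when $q_l = p^+$ — more carefully, one shows the monotone orbit forward and backward cannot have the same limit because $u$ is not itself periodic (its graph is not invariant under integer shifts, as $u(0)=u_0$ is strictly interior while a periodic solution's value at $0$ is a fixed point of $T$; if $u_0$ were already a fixed point we would be in a degenerate sub-case handled directly).

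I expect the main obstacle to be making the "bounded monotone orbit of the interval map $T$ converges, and its limit is a fixed point interior to $(v_-(0),v_+(0))$" argument fully rigorous, in particular \emph{identifying} which fixed points can occur and ruling out $q_l = q_r$. The subtlety is that $T$ may have several fixed points in the interval (corresponding to several $v(\cdot,q)$ with $\bar A(q)=\alpha$ squeezed between $v_-$ and $v_+$), and one must argue that the forward limit $a_\infty^+$ and backward limit $a_\infty^-$ are \emph{distinct} fixed points, which is exactly where the strict monotonicity of $x\mapsto u(x)-v(x,a_\infty^{\pm})$ near the fixed points (a linearization / hyperbolicity-flavoured observation, or simply the fact that between consecutive fixed points $T$ moves points strictly one way) must be invoked. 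This is the dynamical-systems input the author alludes to in the introduction (the Osher–Ralston idea), and it is where I would spend most of the effort; the monotonicity, global existence, and the reduction of periodic solutions to \eqref{cell} are comparatively routine. A clean way to organize it is: (1) reduce to the interval map $T$; (2) observe $a_k = u(k)$ is monotone (because $u(\cdot+1)$ vs $u$ don't cross, so $u(1)-u(0)$ has a fixed sign, propagated by $T$ increasing); (3) pass to the limit to get periodic solutions at both ends; (4) translate back to \eqref{cell} and to the $\bar A(q)=\alpha$ condition via the one-period integration; (5) get $q_l\neq q_r$ from the fact that a non-periodic monotone orbit cannot be asymptotic to the same fixed point on both sides.
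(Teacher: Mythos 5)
Your plan is essentially the paper's own argument: non-crossing via Cauchy--Lipschitz gives (ii) and hence global existence, and the $\T$-periodicity of $A$ makes $u(\cdot+1)$ a solution, so the integer-shifted sequence is monotone and bounded and converges to a periodic solution of \eqref{eq:visc-shock}, which is then identified with some $v(\cdot,q)$ satisfying $\bar A(q)=\alpha$. Your packaging through the period (return) map $T$ and continuous dependence on initial data is only a cosmetic variant of the paper's Dini's-theorem step (and your remark that distinctness of $q_l,q_r$ follows from the strict sign of $u(\cdot+1)-u$ when $u$ is not periodic is in fact slightly more careful than the paper, which does not address this point), so the proposal is correct and follows the same route.
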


\begin{rem}
\begin{enumerate}[\bf (i)]
 \item Assumption \eqref{hyp:plpr} is the analogue of the Rankine-Hugoniot condition for homogeneous conservation laws. It is in fact a necessary condition, as demonstrated in Lemma \ref{lem:CN} below.

\item In general, the asymptotic states $v(q_l), v(q_r)$ are different from $v(p^+), v(p^-)$. Proposition \ref{prop:ex_shock} only ensures that
$$
\bar A(q_l)=\bar A(q_r)=\bar A(p^\pm).
$$
However, under an additional condition of Oleinik type, the asymptotic states can be identified:

\end{enumerate}

\end{rem}

\begin{corol}
Assume that the hypotheses of Proposition \ref{prop:ex_shock} are satisfied, and that the flux $\bar A$ is such that
\be\label{hyp:Oleinik}
\forall p \in (p^-,p^+),\ \bar A(p)\neq \alpha.
\ee
Then $$\{ q_l, q_r\}=\{ p^+,p^-\}. $$
\label{cor:oleinik}
\end{corol}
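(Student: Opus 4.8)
The plan is to combine the two-sided bound $v_-\le u\le v_+$ and the asymptotics from Proposition~\ref{prop:ex_shock}, (ii)--(iii), with the strict monotonicity of the family $v(\cdot,p)$ (Proposition~\ref{prop:cellpb}(ii)). First one locates the limit exponents $q_l,q_r$ inside the closed interval $[p^-,p^+]$; then the Oleinik-type hypothesis \eqref{hyp:Oleinik} forces them to lie at the two endpoints, and since they are distinct they must fill up $\{p^-,p^+\}$.

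The first step is to upgrade the one-sided limit $u(x)-v(x,q_l)\to 0$ (as $x\to-\infty$) into the pointwise inequalities $v(\cdot,p^-)\le v(\cdot,q_l)\le v(\cdot,p^+)$ on all of $\R$. The key elementary fact is that a continuous $1$-periodic function $g$ on $\R$ satisfies $\sup_{\R} g=\limsup_{x\to-\infty}g(x)$, because $g$ attains its maximum on a period and hence attains it at points tending to $-\infty$. Applying this to $g:=v(\cdot,p^-)-v(\cdot,q_l)$ and using Proposition~\ref{prop:ex_shock}, (ii)--(iii),
$$
g(x)=\bigl(v_-(x)-u(x)\bigr)+\bigl(u(x)-v(x,q_l)\bigr),\qquad \limsup_{x\to-\infty}g(x)\le 0,
$$
so $v_-\le v(\cdot,q_l)$ everywhere; symmetrically, writing $v(\cdot,q_l)-v_+=(v(\cdot,q_l)-u)+(u-v_+)$ and using $u\le v_+$ gives $v(\cdot,q_l)\le v_+$ everywhere. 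The very same argument run at $+\infty$ yields $v_-\le v(\cdot,q_r)\le v_+$ on $\R$.

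Next, the strict growth property (Proposition~\ref{prop:cellpb}(ii)) says that $q<q'$ implies $v(\cdot,q)<v(\cdot,q')$ pointwise; contrapositively, $v(\cdot,p^-)\le v(\cdot,q_l)\le v(\cdot,p^+)$ forces $p^-\le q_l\le p^+$, and likewise $q_r\in[p^-,p^+]$. Since Proposition~\ref{prop:ex_shock}(iii) gives $\bar A(q_l)=\bar A(q_r)=\alpha$, hypothesis \eqref{hyp:Oleinik} rules out the open interval $(p^-,p^+)$, so $q_l,q_r\in\{p^-,p^+\}$; as $q_l\ne q_r$ by Proposition~\ref{prop:ex_shock}(iii), we conclude $\{q_l,q_r\}=\{p^-,p^+\}$. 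There is no serious obstacle here: everything of substance is already contained in Proposition~\ref{prop:ex_shock}, and the only slightly delicate point is the periodicity argument in the first step, which converts the asymptotic decay into a genuine comparison between the periodic profiles $v(\cdot,q_l)$, $v(\cdot,q_r)$ and $v(\cdot,p^\pm)$.
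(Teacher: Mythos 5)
Your proof is correct and follows essentially the route the paper intends: the paper leaves the corollary implicit because its proof of Proposition \ref{prop:ex_shock} already records that the asymptotic mean lies in $[p^-,p^+]$ (it passes to the limit in the bound $v_-\le u_k\le v_+$), after which Oleinik's condition \eqref{hyp:Oleinik} together with $\bar A(q_l)=\bar A(q_r)=\alpha$ and $q_l\neq q_r$ gives $\{q_l,q_r\}=\{p^-,p^+\}$. Your only deviation is to re-derive $q_l,q_r\in[p^-,p^+]$ from the statement of the proposition alone, via the observation that for a continuous periodic function the supremum equals the $\limsup$ at $-\infty$ (one could equally just take averages over a period); this is a harmless variant of the same argument.
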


\subsection{Stability of stationary shocks in the whole space case}

We are now ready to state results on shock stability for equation \eqref{eq:evol_per}. Our first result is the analogue of Proposition \ref{prop:initiallayer}: indeed, Theorem \ref{thm:stab_shock1} below states that $S_t u_0$ converges towards a viscous shock, provided $u_0$ is bounded from above and below by the asymptotic states of the shock. In view of Theorem \ref{thm:lgtime}, it is natural to expect that this result remains true for arbitrary initial data. Unfortunately, we have not been able to prove this result in complete generality: we prove that stationary shocks are stable in $L^1$ provided stability holds (in $L^1(\R)$) for solutions of equation \eqref{cell}. We also give explicit examples of fluxes for which the stability of shocks and periodic stationary solutions can be established.

\begin{thm}Assume that the flux $A$ satisfies the assumptions of Proposition \ref{prop:cellpb}. Let
$p_l,p_r\in \R$ such that $p_l\neq p_r$ and $
                                 \bar A(p_r)=\bar A(p_l)=:\alpha$, and assume that  $\bar A, p_l, p_r$ satisfy Oleinik's condition \eqref{hyp:Oleinik}.

Let $U$ be a shock profile connecting $v(p_l)$ to $v(p_r)$. Let $u_0\in U + L^1(\R)$ such that 
for almost every $x\in\R$,
\be
v(x,\min(p_l,p_r))\leq u_0(x)\leq v(x,\max(p_l,p_r)).\label{hyp:u_0}
\ee
Then there exists a shock profile $V$ connecting $v(p_l)$ to $v(p_r)$
and such that $u\in V + L^1_0(\R).$ Moreover,
$$
\lim_{t\to\infty}\left\| S_t u_0- V \right\|_{L^1(\R)}=0.
$$
\label{thm:stab_shock1}
\end{thm}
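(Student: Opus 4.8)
The plan is to follow the Osher--Ralston strategy, exactly as in the proof of Proposition~\ref{prop:initiallayer}, but now in $L^1(\R)$ rather than $L^\infty(\T^N)$. The first step is to set up the dynamical-systems framework. Consider the orbit $\{S_t u_0\}_{t\geq 0}$. By the Contraction property, $t\mapsto \|S_t u_0 - U\|_{L^1(\R)}$ is nonincreasing, and by the Conservation property $S_t u_0 - U \in L^1_0(\R)$ for all $t$ (using that $u_0 - U$ has zero mass — this needs to be checked, or rather it forces the choice of $V$: a priori $u_0 \in U + L^1(\R)$ with some mass $c = \int(u_0-U)$, and the target $V$ must be the shock profile connecting $v(p_l)$ to $v(p_r)$ with $\int(V-U)=c$, whose existence comes from Proposition~\ref{prop:ex_shock} together with a continuity/monotonicity argument on the one-parameter family of shocks produced by varying $u_0(0)$ in that proposition). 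Assumption~\eqref{hyp:u_0} together with the comparison principle gives $v(\cdot,\min(p_l,p_r)) \leq S_t u_0 \leq v(\cdot,\max(p_l,p_r))$ for all $t\geq 0$, which is the crucial uniform bound: it plays here the role that Proposition~\ref{prop:bounds} plays in the periodic case, and it both provides compactness and confines the orbit to a set on which the asymptotic states are pinned down.

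Next I would extract an $\omega$-limit point. From the uniform $L^\infty$ bound and parabolic regularity (the $H^{\alpha/2,\alpha}$ estimates recalled in the introduction, applied on $[t,t+1]\times[-R,R]$), the family $\{S_t u_0\}_{t\geq 1}$ is relatively compact in $C_{\mathrm{loc}}(\R)$; combined with the uniform $L^1$ control of the tails (a consequence of $S_t u_0 - U \in L^1(\R)$ with nonincreasing norm, plus an equi-integrability-at-infinity argument using the comparison with the two translates $v(\cdot,p_\pm)$ and the known decay of $U - v(\cdot,p_{l,r})$ at $\pm\infty$ from Proposition~\ref{prop:ex_shock}(iii)) one upgrades this to relative compactness in $L^1(\R)$. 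So along some sequence $t_k\to\infty$, $S_{t_k}u_0 \to w$ in $L^1(\R)$, with $w \in V + L^1_0(\R)$ and $v(\cdot,\min) \leq w \leq v(\cdot,\max)$. The dynamical-systems lemma (LaSalle-type, as in \cite{OR,SerreHandbook}) then says that $w$ generates an entire bounded trajectory $S_t w$ on which $\|S_t w - V\|_{L^1}$ is \emph{constant}, and one wants to conclude $w = V$.

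The identification $w=V$ is where the real work lies, and it is the step I expect to be the main obstacle — this is precisely the point the author flags as open for general fluxes, so the theorem must be invoking the hypothesis ``stability holds in $L^1(\R)$ for solutions of equation~\eqref{cell}'' right here. The argument should be: because $S_t w$ is a complete trajectory confined between $v(\cdot,\min(p_l,p_r))$ and $v(\cdot,\max(p_l,p_r))$ with constant $L^1$-distance to $V$, and because Oleinik's condition~\eqref{hyp:Oleinik} together with Corollary~\ref{cor:oleinik} forces the asymptotic states of $w$ (and of $S_tw$) to be exactly $v(p_l)$ and $v(p_r)$, the difference $S_t w - V$ has zero mass and lives in $L^1(\R)$, and its behaviour at $\pm\infty$ is governed by the linearization around the periodic profiles $v(\cdot,p_l)$, $v(\cdot,p_r)$; invoking the assumed $L^1(\R)$-stability of those periodic stationary solutions (applied near each end, after cutting off) shows the $L^1$ mass cannot stay constant and positive — hence $\|w-V\|_{L^1}=0$. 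Finally, since the limit $V$ is independent of the subsequence $(t_k)$ (it is determined by the conserved mass), and $t\mapsto \|S_tu_0 - V\|_{L^1}$ is monotone with a subsequence tending to $0$, the whole trajectory converges: $\|S_t u_0 - V\|_{L^1(\R)} \to 0$. I would expect the write-up to spend most of its length on the compactness-in-$L^1$ step (controlling tails uniformly in $t$) and on making precise how the conditional $L^1$-stability of~\eqref{cell} is fed into the identification of $\omega$-limit points.
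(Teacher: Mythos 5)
Your overall frame (Osher--Ralston $\omega$-limit set argument with the $L^1$ norm as Lyapunov function, identification of the limiting shock $V$ by the conserved mass) is the paper's frame, but the two steps where you locate the work are handled incorrectly, and the second one is a genuine gap. First, the compactness step: the bounds $v(\cdot,\min(p_l,p_r))\leq S_t u_0\leq v(\cdot,\max(p_l,p_r))$ do \emph{not} give equi-integrability of $S_t u_0-U$, because at each end one of the two periodic envelopes differs from $U$ by a quantity bounded away from zero (e.g.\ at $+\infty$, $v(\cdot,\max)-U\to v(\max)-v(\min)$), so a bump of mass could escape to infinity without violating either the comparison bounds or the monotone $L^1$ norm; the ``decay of $U-v(\cdot,p_{l,r})$'' only controls one sign of the tail at each end. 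The paper resolves this with Lemma \ref{lem:up/lowshocks}: up to an $\e$-error in $L^1$ (absorbed by the Contraction principle), the initial data is squeezed between two \emph{shock profiles} $U^\e_-\leq u_0^\e\leq U^\e_+$, and since differences of shock profiles are integrable (Lemma \ref{lem:U-V_L1}) this yields uniform integrable envelopes for the whole trajectory, hence equi-integrability and Riesz--Fr\'echet--Kolmogorov compactness. Your sketch omits this reduction and the claimed tail control does not follow from the ingredients you list.

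Second, and more seriously, your identification of the $\omega$-limit point with $V$ invokes ``the assumed $L^1(\R)$-stability of the periodic solutions of \eqref{cell}.'' That hypothesis (\textbf{(H)}) is \emph{not} among the hypotheses of Theorem \ref{thm:stab_shock1}; it is only assumed in Proposition \ref{prop:stab_shock2}, which treats initial data violating \eqref{hyp:u_0}, and it is precisely the point the paper leaves open in general. So as written your argument proves a conditional statement, not the theorem. The paper's identification step is unconditional and different in nature: on the $\omega$-limit set the Lyapunov function $\|W(t)-V\|_{L^1}$ is constant, and this constancy (via the technical lemma of Appendix B) upgrades the Kru\v{z}kov-type inequality to the identity $\sgn(W-V)\,\p_{yy}(W-V)=\p_{yy}|W-V|$, so that $|W-V|$ is a non-negative solution of a linear parabolic equation; since $W(1)-V$ has zero mass and is continuous it vanishes at some point, and Harnack's inequality then forces $W\equiv V$. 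Without this (or some substitute not relying on \textbf{(H)}), your proposal does not close.
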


As outlined before, hypothesis \eqref{hyp:u_0} should be compared with assumption \eqref{hyp:initiallayer}. Thus, the next step would be to prove that stability holds even when \eqref{hyp:u_0} is false. In fact, we are only able to prove the following:

\begin{prop}
 Assume that the flux $A$ satisfies the assumptions of Proposition \ref{prop:bounds}. Let $p_l,p_r\in \R$ such that $p_l\neq p_r$, $\bar A(p_r)=\bar A(p_l)$, and such that  \eqref{hyp:Oleinik} is satisfied.
Assume that the following assertion is true:

\begin{center}
 {\bf(H)}                     For  $p\in\{p_l,p_r\}$, for all $u_0 \in v(\cdot, p) + L^1_0(\R)$, $
\ds\lim_{t\to\infty}\| S_t u_0 - v(\cdot,p)\|_{L^1(\R)}=0.
$
\end{center}

Let $U$ be a shock profile connecting $v(p_l)$ to $v(p_r)$, and let $u_0\in U + L^1(\R)$. 
Then there exists a shock profile $V$ connecting $v(p_l)$ to $v(p_r)$
and such that $u\in V + L^1_0(\R).$ Moreover,
$$
\lim_{t\to\infty}\left\| S_t u_0- V \right\|_{L^1(\R)}=0.
$$

\label{prop:stab_shock2}

\end{prop}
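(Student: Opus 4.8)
The plan is to adapt the $L^1$-stability argument of Freist\"uhler and Serre (\cite{FS,SerreHandbook}) to the present $x$-dependent setting, with the one-parameter family of shock profiles connecting $v(p_l)$ to $v(p_r)$ playing the role of the translates of a single profile, and with hypothesis \textbf{(H)} playing the role of the (trivial, in the homogeneous case) $L^1$-stability of the constant rest states. \textbf{Step 1 (set-up; choice of $V$).} Since $U\in W^{1,\infty}(\R)$ is stationary and $u_0\in U+L^1(\R)$, Proposition \ref{prop:bounds} gives $\sup_{t\ge0}\|S_tu_0\|_{L^\infty(\R)}<\infty$. By Contraction and Conservation, $S_tu_0-U\in L^1(\R)$ for all $t$, $t\mapsto\|S_tu_0-U\|_{L^1}$ is non-increasing, and $\int_\R(S_tu_0-U)=\mu:=\int_\R(u_0-U)$. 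Analysing the ODE \eqref{eq:visc-shock} as in Proposition \ref{prop:ex_shock} and invoking Corollary \ref{cor:oleinik}, the shock profiles connecting $v(p_l)$ to $v(p_r)$ form a family $\{W_c\}$ that is totally ordered pointwise, along which the convergence to the periodic asymptotic states is exponential (so $W_c-W_{c'}\in L^1(\R)$), and for which $c\mapsto\int_\R(W_c-U)$ is a continuous strictly monotone bijection onto $\R$. I take $V:=W_{c^*}$, where $c^*$ is the unique value with $\int_\R(V-U)=\mu$; then $u_0-V\in L^1_0(\R)$, and in fact $S_tu_0\in V+L^1_0(\R)$ for every $t\ge0$, while $\ell:=\lim_{t\to\infty}\|S_tu_0-V\|_{L^1}$ exists. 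The goal is $\ell=0$.

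\textbf{Step 2 (compactness and the zero-number).} The uniform $L^\infty$ bound together with standard parabolic regularity estimates shows that $\{S_tu_0:\ t\ge1\}$ is relatively compact in $\mathcal C_{\mathrm{loc}}(\R)$; hence the $\omega$-limit set $\omega(u_0)$, taken in $\mathcal C_{\mathrm{loc}}$, is non-empty, compact, connected and $(S_t)$-invariant, and a standard argument based on the Contraction property (see \cite{initiallayer}) shows that every $w\in\omega(u_0)$ is a bounded stationary solution of \eqref{eq:evol_per}, i.e. $w'=A(\cdot,w)-\beta_w$ for some constant $\beta_w$. Moreover, writing the equation for the difference $z=S_tu_0-W_c$ of two solutions, the $W^{1,\infty}$-bounds on $A$ and the $L^\infty$-bound on the solutions put it in the form $z_t=z_{xx}+b_1(x,t)z+b_2(x,t)z_x$ with $b_1,b_2$ bounded, so that (as in \cite{FS}) the number of sign changes of $S_tu_0-W_c$ is finite for $t>0$ and non-increasing; since any two members of $\{W_c\}$ differ by a function of constant sign, this is the analogue of the lap-number monotonicity used there.

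\textbf{Step 3 (role of \textbf{(H)}; conclusion).} Since $u_0-v(\cdot,p_l)\in L^1((-\infty,0))$ and $u_0-v(\cdot,p_r)\in L^1((0,\infty))$ (using that $U$ is asymptotic, exponentially, to $v(\cdot,p_l)$ and $v(\cdot,p_r)$), I localise $u_0$ near $-\infty$ (resp. near $+\infty$), glue $v(\cdot,p_l)$ (resp. $v(\cdot,p_r)$) on the complementary half-line, correct the arbitrarily small mass defect by a thin bump placed far away, and apply \textbf{(H)} together with the comparison principle. This shows that no mass of $S_tu_0-U$ lingers near $\pm\infty$, uniformly in $t$; hence, by $\mathcal C_{\mathrm{loc}}$-convergence and Fatou, every $w\in\omega(u_0)$ satisfies $w-U\in L^1(\R)$, $w-v(\cdot,p_l)\in L^1((-\infty,0))$ and $w-v(\cdot,p_r)\in L^1((0,\infty))$. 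Feeding this into $w'=A(\cdot,w)-\beta_w$ and comparing with $v'=A(\cdot,v)-\bar A(p)$ forces $\beta_w=\alpha$ and $w(x)-v(x,p_l)\to0$ at $-\infty$, $w(x)-v(x,p_r)\to0$ at $+\infty$; Oleinik's condition \eqref{hyp:Oleinik} excludes any periodic profile of mean flux $\alpha$ strictly between the two, so $w$ is a shock profile connecting $v(p_l)$ to $v(p_r)$, i.e. $\omega(u_0)\subseteq\{W_c\}$. As $c\mapsto W_c$ is a homeomorphism onto its image and $\omega(u_0)$ is connected, $\omega(u_0)=\{W_c:\ c\in J\}$ for an interval $J$; but whenever $W_c\in\omega(u_0)$, choosing $t_n\to\infty$ with $S_{t_n}u_0\to W_c$ in $\mathcal C_{\mathrm{loc}}$ and using the uniform tail control upgrades this to $\|S_{t_n}u_0-W_c\|_{L^1}\to0$, whence $\int_\R(W_c-U)=\lim_n\int_\R(S_{t_n}u_0-U)=\mu$ and $c=c^*$. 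Thus $\omega(u_0)=\{V\}$, i.e. $S_tu_0\to V$ in $\mathcal C_{\mathrm{loc}}(\R)$, and a last application of the uniform tail control turns this into $\|S_tu_0-V\|_{L^1(\R)}\to0$; combined with Step 1 this also gives $S_tu_0\in V+L^1_0(\R)$ for all $t\ge0$. (Alternatively, the zero-number monotonicity of Step 2 can be used in place of the tail estimate to rule out a non-degenerate interval $J$, following \cite{FS}.)

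\textbf{Main obstacle.} The delicate point is the use of \textbf{(H)} in Step 3: $v(\cdot,p_l)$ is only a good reference near $-\infty$ (globally $u_0$ looks like $v(\cdot,p_r)$ near $+\infty$), so \textbf{(H)} cannot be applied to $u_0$ directly but only after a careful spatial localisation, and the small mass defect created by this localisation must be absorbed compatibly with the comparison principle (its sign dictating whether one squeezes $S_tu_0$ from above or from below by a zero-mass perturbation of $v(\cdot,p_l)$, the missing one-sided bound being recovered from mass conservation). Passing from the one-sided, $L^1_{\mathrm{loc}}$ information provided by \textbf{(H)} to the uniform-in-time statement that no mass escapes to infinity is where the real work lies.
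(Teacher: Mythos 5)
Your Step 1 is sound (it reproduces Lemma \ref{lem:L10}: the shocks form an ordered family, differences of shocks are integrable by Lemma \ref{lem:U-V_L1} --- note this does not require the exponential convergence you invoke, which Lemma \ref{prop:sens} only gives when the averaged linearization at the end state has a strict sign --- and the mass map is a continuous bijection onto $\R$, so $V$ exists). But the core of the argument, namely how \textbf{(H)} is converted into convergence, has a genuine gap, and you point at it yourself: everything in Steps 2--3 hinges on (a) the claim that every $\mathcal C_{\mathrm{loc}}$ $\omega$-limit point is a stationary solution, and (b) a uniform-in-time tail estimate (``no mass lingers near $\pm\infty$'') for $S_t u_0 - U$. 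Neither is established. For (a), the usual LaSalle/contraction argument needs $L^1$-compactness of the trajectory, i.e. exactly the equi-integrability (b) that you are trying to prove; on the whole line, $\mathcal C_{\mathrm{loc}}$-invariance alone does not force limit points to solve the stationary ODE. For (b), your localisation of $u_0$ near $-\infty$ glued to $v(\cdot,p_l)$ does not produce a function comparable to $u_0$: near $+\infty$ the data $u_0$ sits near $v(\cdot,p_r)$, and without assumption \eqref{hyp:u_0} it need not lie between $v(p^-)$ and $v(p^+)$ at all, so the glued function is neither above nor below $u_0$; the comparison principle requires a pointwise ordering, and ``recovering the missing one-sided bound from mass conservation'' is not an argument --- conservation gives only integral, not pointwise, information. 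Moreover, even if \textbf{(H)} could be applied to such localized data, it yields $L^1$-convergence at $t\to\infty$, not the uniform-in-time control of tails you need to upgrade $\mathcal C_{\mathrm{loc}}$ convergence to $L^1$ convergence.

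The paper's proof avoids this difficulty rather than solving it, and this is the idea your proposal is missing. Following \cite{SerreHandbook}, define $\ell_0(u_0)=\lim_{t\to\infty} d(S_t u_0,\cG)$ and $\ell_1(u_0)=\lim_{t\to\infty} d(S_t u_0,\cA_0)$, where $\cA_0$ is the set of data squeezed between $v(p^-)$ and $v(p^+)$; both limits exist and define contractions invariant under $S_t$, and Theorem \ref{thm:stab_shock1} says $\ell_0=0$ on $\cA_0$, whence $\ell_0(u_0)\leq \ell_1(u_0)$ for every $u_0\in\cA$. So it suffices to show $d(S_t u_0,\cA_0)\to 0$, and here \textbf{(H)} enters through a \emph{global} sandwich: one builds $a^\pm$ with $a^-\leq u_0\leq a^+$ pointwise and $a^\pm\in v(p^\pm)+L^1_0(\R)$ --- possible because $\int_\R\bigl(v(\cdot,p^+)-u_0\bigr)_+=+\infty$, so there is infinite room to place the compensating mass below $v(p^+)$ --- and then $d(S_t u_0,\cA_0)\leq \|S_t a^+-v(p^+)\|_1+\|S_t a^--v(p^-)\|_1\to 0$ by \textbf{(H)} and comparison. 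No equi-integrability of the trajectory, no classification of $\omega$-limit points, and no zero-number argument is needed; the identification of the limit as the specific $V$ of Step 1 then follows from conservation of mass and the ordering of shock profiles. As it stands, your proposal does not contain a workable substitute for this construction.
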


\begin{rem}

$\bullet$ As we will see in Section \ref{sec:shock_stab:2}, hypothesis  \textbf{(H)} can be relaxed into 
\vskip1mm

 \quad\textbf{(H')} For  $p\in\{p_l,p_r\}$, there exists $\delta>0$ such that for all $u_0\in v(\cdot, p) + L^1_0(\R) $ ,
$$
\|u_0 - v(p) \|_1\leq \delta\Rightarrow\lim_{t\to\infty}\| S_t u_0 - v(\cdot,p)\|_{L^1(\R)}=0.
$$

\noindent$\bullet$ In Section \ref{sec:shock_stab:2}, we will prove the following result: for all $p\in\R$, there exists $\delta>0$ such that if $u_0\in v(\cdot, p) + L^1_0(\R) $ satisfies $\|u_0 - v(p) \|_1\leq \delta$, then 
$$
\lim_{t\to\infty}\| S_t u_0 - v(\cdot,p)\|_{L^\infty(\R)}=0.
$$
Hence, in this case, for all $\e>0$ there exists $t_\e\geq 0$ such that $S_{t_\e} u_0\in v(\cdot, p) + L^1_0$ and
$$
v(\cdot, p-\e)\leq u_0\leq v(\cdot, p+\e).
$$
Consequently, hypothesis \textbf{(H)} can also be relaxed into
\vskip1mm

\quad
 \textbf{(H'')} 
For all $p\in\R$, there exists $\delta>0$ such that for all $u_0\in v(\cdot, p) + L^1_0(\R) $ 
$$
v(\cdot, p-\delta)\leq u_0\leq v(\cdot, p+\delta)\Rightarrow\lim_{t\to\infty}\| S_t u_0 - v(\cdot,p)\|_{L^1(\R)}=0.
$$

To sum up, denoting by \textbf{(C)} the conclusion of Proposition \ref{prop:stab_shock2} (that is, shock stability), we have roughly
$$
\textbf{(H)} \Rightarrow \textbf{(H'')} \Rightarrow \textbf{(H')}\Rightarrow  \textbf{(C)} .
$$

\end{rem}
\vskip2mm

We now give an example when it is known that \textbf{(H)} is true. This example relies on the analysis performed in the linear case by A. Blanchet, J. Dolbeault and M. Kowalczyk (see \cite{BDK}).
\begin{prop}
Assume that the flux $A$ satisfies the hypotheses of Proposition \ref{prop:bounds}, and let $p\in\R$. Assume that $A$ is linear in a neighbourhood of $v(\cdot, p)$, i.e.
$$
\exists b\in \mathcal C^1(\T^N),\ \exists \eta>0, \ \forall \xi\in(-\eta,\eta),\ A(y,v(y,p)+ \xi)= A(y,v(y,p)) + b(y)\xi.
$$ 
Then, provided a technical assumption on the moments of $S_t u_0$ is satisfied (see \eqref{moments}), there exists $\delta>0$ such that for all $u_0\in v(\cdot, p )+ L^1_0,$
$$
\| u_0-v(\cdot,p)\|_1\leq \delta\Rightarrow \lim_{t\to\infty}\|S_t u_0 - v(p) \|_1=0.
$$
\label{prop:linear}
\end{prop}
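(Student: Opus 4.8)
The plan is to show that, after a finite time, the solution enters the region where $A$ is affine, so that the perturbation solves an explicit linear drift–diffusion equation, and then to read off its decay from the analysis of \cite{BDK}. Write $w(t):=S_t u_0 - v(\cdot,p)$. Since $v(\cdot,p)$ is a stationary solution, $S_t v(\cdot,p)=v(\cdot,p)$, and applying the Contraction and Conservation properties to the pair $u_0,\,v(\cdot,p)$ gives, for every $t\geq 0$,
\[
\|w(t)\|_{L^1(\R)}\leq \|u_0-v(\cdot,p)\|_{L^1(\R)}\leq \delta ,\qquad \int_\R w(t)=\int_\R\big(u_0-v(\cdot,p)\big)=0 ,
\]
so that $w(t)\in L^1_0(\R)$ for all $t$. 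It therefore suffices to prove $\|w(t)\|_{L^1(\R)}\to0$.

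First I would bring the trajectory into the linearity neighbourhood. For this I use the $L^\infty$--stability statement recorded in the remark following Proposition~\ref{prop:stab_shock2}: since the flux satisfies the hypotheses of Proposition~\ref{prop:bounds}, there is $\delta>0$, depending only on $p$ and $A$, such that $\|u_0-v(\cdot,p)\|_{L^1(\R)}\leq\delta$ forces $\|w(t)\|_{L^\infty(\R)}\to0$ as $t\to\infty$; this relies on the uniform bound of Proposition~\ref{prop:bounds}, on parabolic regularity and on a compactness/invariance argument, and not on the affine structure of $A$. Fix this $\delta$. Then there is $T_0=T_0(u_0)\geq0$ with $\|w(t)\|_{L^\infty(\R)}<\eta$ for all $t\geq T_0$, where $\eta$ is the size of the linearity window. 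For $t\geq T_0$ and every $x$ one has $|w(t,x)|<\eta$, hence $A\big(x,u(t,x)\big)=A\big(x,v(x,p)\big)+b(x)\,w(t,x)$ with $u(t,\cdot)=v(\cdot,p)+w(t,\cdot)$; differentiating in $x$ and using that $v(\cdot,p)$ solves \eqref{cell}, i.e.\ $\partial_{xx}v(\cdot,p)=\partial_x A(\cdot,v(\cdot,p))$ in dimension one, equation \eqref{eq:evol_per} collapses to the linear Fokker--Planck equation
\[
\partial_t w + \partial_x\big(b(x)\,w\big)-\partial_{xx}w=0,\qquad t>T_0,\ x\in\R ,
\]
with periodic coefficient $b\in\mathcal C^1(\T)$, of which, by parabolic regularity, $w$ is a bounded classical solution for $t>T_0$.

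I would then apply the large-time analysis of Blanchet, Dolbeault and Kowalczyk \cite{BDK} to this equation, started from $w(T_0)\in L^1_0(\R)$: their result identifies the asymptotics of $w(t)$, in self-similar variables, with a multiple of a fixed (homogenized) Gaussian-type profile, the multiple being the conserved mass $\int_\R w$; since $\int_\R w(T_0)=0$ this leading term vanishes, and one is left with $\|w(t)\|_{L^1(\R)}\to0$, which is the decay of the component orthogonal to the zero mode. The role of the technical moment assumption \eqref{moments} on $S_t u_0$ is precisely to place $w(T_0)=S_{T_0}u_0-v(\cdot,p)$ in the weighted functional framework in which \cite{BDK} is proved. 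Combined with the first paragraph, this gives $\|S_t u_0-v(\cdot,p)\|_{L^1(\R)}\to0$.

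The main obstacle is this last step: one must verify that the zero-mass condition genuinely upgrades to $L^1$ decay in the setting of \cite{BDK} (and not merely to the vanishing of the leading Gaussian term), that the moment condition \eqref{moments} transfers from $u_0$ to $S_{T_0}u_0-v(\cdot,p)$ and is exactly the one required there, and that no structural assumption on $b$ beyond periodicity is needed. Two secondary and essentially routine points are the identification, for $t>T_0$, of $S_t u_0$ with the classical solution of the linear equation started at $w(T_0)$ (uniqueness among bounded solutions), and --- if one prefers not to invoke the $L^\infty$--stability statement --- establishing directly an $L^1$--$L^\infty$ smoothing estimate guaranteeing $\|w(t)\|_{L^\infty(\R)}<\eta$ for large $t$ once $\|u_0-v(\cdot,p)\|_{L^1(\R)}$ is small.
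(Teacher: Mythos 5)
The weak point is your first step, and it is not a side issue: the $L^\infty$ decay $\|S_t u_0 - v(\cdot,p)\|_{L^\infty}\to 0$ for $\|u_0-v(\cdot,p)\|_{L^1}\leq\delta$ is precisely the main content of the paper's proof of this proposition, not an available black box. The ``$L^\infty$--stability statement recorded in the remark following Proposition \ref{prop:stab_shock2}'' that you invoke is only a forward announcement of the result \eqref{conv_infty}, which is established in Section \ref{sec:shock_stab:2} \emph{inside} the proof of Proposition \ref{prop:linear}; citing it here is circular. Moreover, your indication of how it would be proved (``parabolic regularity and a compactness/invariance argument'') does not match any argument available in the paper for this situation: the LaSalle/$\omega$-limit machinery of Section \ref{sec:shock_stab:1} relies on squeezing the data between two shock profiles to get equi-integrability of the trajectory in $L^1(\R)$, and a perturbation of a periodic wave in $L^1_0$ has no such structure, so relative compactness in $L^1(\R)$ is not clear. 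Likewise, the alternative you call ``essentially routine'' --- a direct $L^1$--$L^\infty$ smoothing estimate --- cannot be taken from the linear theory, because $w$ does not solve the linear Fokker--Planck equation until it has already entered the window $|w|<\eta$; one must control the genuinely nonlinear equation $\p_t w+\p_y(b w)-\p_{yy}w=-\p_y\tilde B(y,w)$, with $\tilde B$ quadratic in $w$.

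What the paper actually does at this point is the real work, and it is also where the smallness of $\delta$ enters (in your write-up $\delta$ plays no identifiable role beyond being inherited from the cited remark): using the invariant measure $m$ of the linearized operator and the Michel--Mischler--Perthame identity for $m|w/m|^2$, one gets an energy inequality in which the quadratic flux term is bounded by $C\|w_0\|_{L^1}\int m|\p_y(w/m)|^2$, so that for $\|w_0\|_{L^1}$ small it is absorbed by the dissipation; Nash's inequality combined with the nonincreasing $L^1$ norm then yields the algebraic decay $\|w(t)\|_{L^2}\leq C\|w_0\|_{L^1}t^{-1/4}$, and a Harnack/parabolic regularity step upgrades this to $\|w(t)\|_{L^\infty}\leq C\|w(t-1)\|_{L^2}\to 0$. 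Only after this does one land in the linearity window, and from there your second step (the equation becomes $\p_t w+\p_x(bw)-\p_{xx}w=0$, and the Blanchet--Dolbeault--Kowalczyk asymptotics under the moment condition \eqref{moments}, applied to the positive and negative parts of $w$ which share the same leading Gaussian profile because the mass is zero, give $\|w(t)\|_{L^1}\to 0$) coincides with the paper's argument and is fine. As it stands, then, the proposal assumes the decisive estimate rather than proving it, and the routes you sketch for filling it would not go through without essentially reconstructing the paper's nonlinear energy argument.
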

The assumption \eqref{moments} is a little technical to state at this stage, and is inherited from the analysis in \cite{BDK}. However, as explained in \cite{BDK}, this hypothesis is expected to be satisfied for a large class of initial data, so that in fact \eqref{moments} is not a restriction.

This allows us to give an explicit case of flux for which shock stability holds.
\begin{corol}
Assume that the flux $A$ is given by
$$
A(y,v)=V(y) + f(v),
$$
 where $V\in\mathcal C^2(\T^N)$ and $f\in\mathcal C^2(\R)$ is a convex function which is linear at infinity, i.e.
$$
\exists (a_-,a_+)\in(0,\infty)^2,\ \exists A>0\quad \left\{\begin{array}{ll}
f(v)=a_+ v& \text{ if }v>A,\\
f(v)=-a_- v& \text{ if }v<-A.
\end{array}
 \right.
$$
Then the following properties hold:
\begin{enumerate}[\bf (i)]
 \item For all $\alpha>0$ large enough, there exist $(p_l,p_r)\in\R^2$ such that $p_l\neq p_r$ and $\bar A(p_l)=\bar A(p_r)=\alpha$, and there exists a shock profile $U$ connecting $v(p_l)$ to $v(p_r)$.

\item Let $u_0\in U + L^1(\R)$. Then there exists a shock profile $V$ such that $u\in V + L^1_0$. Moreover, if \eqref{moments} holds for any $v_0\in v(p^\pm) + L^1_0(\R)$, then $\lim_{t\to\infty}\| S_t u_0 - V\|_1=0.$

\end{enumerate}

\label{cor:convex}

\end{corol}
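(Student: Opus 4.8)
# Proof Proposal for Corollary \ref{cor:convex}

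\textbf{Overall strategy.} The plan is to verify that the flux $A(y,v) = V(y) + f(v)$ satisfies all the hypotheses needed to invoke the machinery already assembled in the paper — namely Proposition \ref{prop:bounds} (to get uniform $L^\infty$ bounds), Proposition \ref{prop:ex_shock} together with Corollary \ref{cor:oleinik} (to produce the shock profile $U$), and then Proposition \ref{prop:linear} fed into Proposition \ref{prop:stab_shock2} (to get stability). So the corollary is essentially a matter of checking that this particular flux meets every structural requirement, and then applying the theorems in sequence. The only genuinely new computation is the identification of the effective flux $\bar A(p)$ and the verification of Oleinik's condition \eqref{hyp:Oleinik} for large $\alpha$.

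\textbf{Step 1: structural hypotheses.} Since $f$ is $\mathcal C^2$ and linear at infinity, $f'$ is bounded, so $\partial_v A(y,v) = f'(v)$ is globally bounded; this immediately gives \eqref{hyp:unif_lip} and the growth bound \eqref{polygrowthai} with $m=0$. Since $V\in\mathcal C^2(\T^N)$, $\dv_y A(y,v) = \dv_y V(y)$ is independent of $v$ and bounded, so \eqref{polygrowthaN} holds with $n=0$, and \eqref{hyp:ex_cell} holds trivially ($n=0<1$). Moreover \eqref{hyp:A1} holds: $\dv_y A(y,v+w) - \dv_y A(y,v) = 0$, and $\partial_v A(y,v+w) - \partial_v A(y,v) = f'(v+w)-f'(v)$, which is Lipschitz in $w$ uniformly on bounded sets of $v$ because $f\in\mathcal C^2$ with $f''$ compactly supported. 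Hence the hypotheses of both Proposition \ref{prop:cellpb} and Proposition \ref{prop:bounds} are met, so the family $v(\cdot,p)$ exists with all its monotonicity and asymptotic properties, and $\sup_{t\ge 0}\|S_t u_0\|_{L^\infty}<\infty$ for any $u_0 \in U + L^1(\R)$ once we know $U$ is a stationary solution with $U\in W^{1,\infty}$.

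\textbf{Step 2: existence of the shock (item (i)).} Here I would compute $\bar A(p)$. Averaging the cell equation $-v'' + (A(x,v))' = 0$ over one period shows $A(x,v(x,p)) - \langle A(\cdot,v(\cdot,p))\rangle$ is the derivative of the periodic function $v(\cdot,p)$, hence $\bar A(p) = \langle V\rangle + \langle f(v(\cdot,p))\rangle$. Using the growth property (ii) of Proposition \ref{prop:cellpb} and the behaviour at infinity, as $p\to+\infty$ we have $v(y,p)\to+\infty$ uniformly, so eventually $f(v(y,p)) = a_+ v(y,p)$ for all $y$, giving $\bar A(p) = \langle V\rangle + a_+ p$ for $p$ large; similarly $\bar A(p) = \langle V\rangle - a_- p$ for $p$ very negative. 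In particular $\bar A$ is continuous, tends to $+\infty$ at both ends (since $a_\pm>0$), and attains a minimum; so for every $\alpha$ larger than $\sup_{[p_0^-,p_0^+]}\bar A$ (where $[p_0^-,p_0^+]$ is any interval containing the minimum) the equation $\bar A(p)=\alpha$ has exactly one solution $p_r$ on the increasing linear branch and exactly one solution $p_l$ on the decreasing linear branch, with $p_l \ne p_r$. For $\alpha$ large enough these two points lie on the affine pieces where $\bar A$ is strictly monotone, hence $\bar A(p)\ne\alpha$ for all $p$ strictly between them if we choose $\alpha$ beyond $\max_{[p_l,p_r]}\bar A$; this is Oleinik's condition \eqref{hyp:Oleinik}. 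One subtlety: I must check that $\bar A$ restricted to the intermediate range does not re-touch the value $\alpha$ — this is where "large enough" is used, picking $\alpha$ strictly above the maximum of $\bar A$ over the compact set where it is not affine. Then Proposition \ref{prop:ex_shock}, together with the strict inequality $v_-(0) < u_0 < v_+(0)$ for a suitable choice of $u_0$, produces a global solution $u$ of \eqref{eq:visc-shock}, which by Corollary \ref{cor:oleinik} connects $v(p^-)$ to $v(p^+)$; relabel $\{p^-,p^+\}=\{p_l,p_r\}$ to get the shock profile $U$. This proves (i).

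\textbf{Step 3: stability (item (ii)).} Given $u_0\in U + L^1(\R)$, I first want to apply Proposition \ref{prop:stab_shock2}, whose hypotheses are those of Proposition \ref{prop:bounds} (checked in Step 1), the Rankine–Hugoniot and Oleinik conditions on $p_l,p_r$ (checked in Step 2), plus hypothesis \textbf{(H)} for $p\in\{p_l,p_r\}$. By the remark following Proposition \ref{prop:stab_shock2} it suffices to verify \textbf{(H')}, the local-in-$L^1$ version. Now observe that near each of $v(\cdot,p_l)$ and $v(\cdot,p_r)$ the flux \emph{is} linear: for $p$ equal to $p_l$ or $p_r$ chosen large (in absolute value), $v(y,p)$ lies in the region $\{v>A\}$ or $\{v<-A\}$ uniformly in $y$, so on a uniform neighbourhood $A(y,v(y,p)+\xi) = V(y) + a_\pm(v(y,p)+\xi) = A(y,v(y,p)) + a_\pm \xi$, which is exactly the linearity hypothesis of Proposition \ref{prop:linear} with $b(y) \equiv a_\pm$. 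Applying Proposition \ref{prop:linear} at $p = p_l$ and $p = p_r$ — under the stated moment assumption \eqref{moments} for any $v_0 \in v(p^\pm) + L^1_0(\R)$ — gives precisely \textbf{(H')}. Plugging this into Proposition \ref{prop:stab_shock2} yields a shock profile $V$ connecting $v(p_l)$ to $v(p_r)$ with $u \in V + L^1_0(\R)$ and $\lim_{t\to\infty}\|S_t u_0 - V\|_{L^1(\R)} = 0$, which is item (ii).

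\textbf{Main obstacle.} The computational heart is Step 2: showing that for $\alpha$ large the solutions $p_l, p_r$ of $\bar A(p) = \alpha$ fall on the two affine branches of $\bar A$ and that no intermediate $p$ achieves the value $\alpha$. This requires knowing that $v(y,p) \to \pm\infty$ uniformly in $y$ as $p\to\pm\infty$ (available from Proposition \ref{prop:cellpb}(iii) once \eqref{hyp:unif_lip} is checked) so that $\bar A$ is genuinely affine — not merely asymptotically linear — outside a compact $p$-interval; the strict convexity of $f$ in between then guarantees $\bar A$ is increasing on that compact interval only up to its (unique, since $f$ convex) minimum, but one still has to rule out oscillation. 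In fact, differentiating $\bar A(p) = \langle V\rangle + \langle f(v(\cdot,p))\rangle$ and using the growth property $\partial_p v(\cdot,p) > 0$ together with convexity $f'' \ge 0$ shows $\bar A' (p) = \langle f'(v(\cdot,p))\partial_p v(\cdot,p)\rangle$ changes sign at most once as $p$ increases — it is negative where $f'(v(\cdot,p))<0$ throughout, positive where $f'(v(\cdot,p))>0$ throughout — so $\bar A$ is unimodal. This unimodality is the key structural fact that makes the choice of large $\alpha$ work cleanly, and establishing it rigorously (rather than just heuristically) is the one place where a little care is genuinely needed.
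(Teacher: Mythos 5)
Your proposal is correct and follows essentially the same route as the paper: compute $\bar A(p)=\mean{V}\pm a_\pm p$ on the affine branches using Proposition \ref{prop:cellpb}(iii), obtain $(p_l,p_r)$ and the shock from Proposition \ref{prop:ex_shock} and Corollary \ref{cor:oleinik}, then note that $A$ is exactly linear near $v(\cdot,p^\pm)$ for $|p^\pm|$ large, so Proposition \ref{prop:linear} yields \textbf{(H')} and Proposition \ref{prop:stab_shock2} together with the remark following it concludes. The only divergence is minor: the paper gets Oleinik's condition from the convexity of $\bar A$ (Lemma \ref{lem:convex}), whereas you take $\alpha$ above $\sup\bar A$ on the compact region where $\bar A$ is not affine — both work, and your closing appeal to strict convexity/unimodality of $\bar A$ is unnecessary (note $f$ is only assumed convex, and differentiability of $p\mapsto v(\cdot,p)$ is not established), since your choice of $\alpha$ already settles the point.
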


The plan of the paper is the following: given the similarity between the statements for periodic solutions when $Q=\T^N$, and stationary shocks when $Q=\R$, we first prove the existence of standing shocks (i.e. Proposition \ref{prop:ex_shock}) and the shock stability result under boundedness conditions on the initial data (i.e. Theorem \ref{thm:stab_shock1}) in sections \ref{sec:shocks_ex} and \ref{sec:shock_stab:1} respectively. At this stage, we are able to treat both models simultaneously, and thus we prove Proposition \ref{prop:bounds} in Section \ref{sec:bounds}. Section \ref{sec:per-stab} is devoted to the proof of Theorem \ref{thm:lgtime}, and at last, we give in Section \ref{sec:shock_stab:2} further results on shock stability, including the proofs of Propositions \ref{prop:stab_shock2} and \ref{prop:linear}.

\section{Existence of one dimensional stationary viscous shocks}

\label{sec:shocks_ex}

This section is devoted to the proof of Proposition \ref{prop:ex_shock}, together with a number of results related to viscous shocks which will be useful in the proof of Theorem \ref{thm:stab_shock1}. These auxiliary results (monotonicity of shock profiles, integrability of the difference between two shock profiles, etc.) can be found in paragraph \ref{ssec:aux_shock}. We also give in paragraph \ref{ssec:convex} a few explicit examples in the case when the flux $A$ is convex. 

We begin with some comments on assumption \eqref{hyp:plpr}.

\subsection{Analysis of necessary conditions}

\begin{lem}
Let $q_l,q_r\in \R$, and let $u\in W^{1, \infty}(\R)$ be such that
$$
\begin{array}{l}
u(x) - v(x, q_l)\to 0 \quad \text{as }x\to - \infty,\\
u(x) - v(x, q_r)\to 0 \quad \text{as }x\to + \infty,\\
\ds - u''+ \frac{d}{dx}(A(x,u(x)))=0.
\end{array}
$$
Then $\bar A(q_r)=\bar A(q_l)=:\alpha,$ and $u$ satisfies
$$
- u' + A(x,u(x))=\alpha.
$$
\label{lem:CN}
\end{lem}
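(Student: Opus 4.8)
The plan is to integrate the conservation-law equation once and then extract the value of the constant of integration by examining the behaviour at each end of the real line. Since $-u'' + \frac{d}{dx}(A(x,u(x))) = 0$, the function $x \mapsto -u'(x) + A(x,u(x))$ has zero derivative in the distributional sense, hence equals a constant; call it $\alpha$. So the first step is simply: from the third displayed equation we get
$$
-u'(x) + A(x,u(x)) = \alpha \qquad \text{for all } x \in \R,
$$
for some $\alpha \in \R$. It remains to identify $\alpha$ as $\bar A(q_l)$ and as $\bar A(q_r)$.

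To compute $\alpha$ from the left end, I would average the relation over an interval. Fix $L > 0$ and integrate the identity over $[-L-1, -L]$ (an interval of length one, matching the period): this gives
$$
\alpha = -\big(u(-L) - u(-L-1)\big) + \int_{-L-1}^{-L} A(x,u(x))\,dx.
$$
Now I use the hypothesis $u(x) - v(x,q_l) \to 0$ as $x \to -\infty$. Since $u \in W^{1,\infty}(\R)$ and $v(\cdot,q_l)$ is (by Proposition \ref{prop:cellpb}(i), after noting $v(\cdot,q_l) \in W^{2,q}_{\mathrm{per}}$, hence $C^1$) periodic and bounded, the difference $u - v(\cdot,q_l)$ together with its derivative... — here one must be slightly careful: $u - v(\cdot,q_l) \to 0$ does not immediately give $u' - v'(\cdot,q_l) \to 0$. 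However, from the equation $u' = A(x,u) - \alpha$ and the corresponding identity $v'(x,q_l) = A(x,v(x,q_l)) - \bar A(q_l)$ (which follows by the same integration argument applied to the cell equation, or is part of its structure), one sees $u'(x) - v'(x,q_l) = A(x,u(x)) - A(x,v(x,q_l)) - (\alpha - \bar A(q_l))$; using local Lipschitz continuity of $A$ in its second variable (valid since $A \in W^{1,\infty}_{\mathrm{loc}}$ and $u$, $v(\cdot,q_l)$ are bounded) and $u(x) - v(x,q_l)\to 0$, the term $A(x,u(x)) - A(x,v(x,q_l)) \to 0$. Therefore $u'(x) - v'(x,q_l) \to -(\alpha - \bar A(q_l))$; but since $u - v(\cdot,q_l) \to 0$ and $v'(\cdot,q_l)$ is periodic with mean zero over each period, integrating $u' - v'(\cdot,q_l)$ over $[-L-1,-L]$ forces the limit of $u'-v'(\cdot,q_l)$ to be $0$, whence $\alpha = \bar A(q_l)$. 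Concretely, I would write: the left side of the integrated identity above, $-(u(-L)-u(-L-1)) + \int_{-L-1}^{-L} A(x,u(x))\,dx$, converges as $L\to\infty$ to $-(\text{something} \to 0) + \int_{\T} A(x,v(x,q_l))\,dx$ by the $W^{1,\infty}$ bound, dominated convergence, and periodicity, and one recognizes the limit as $\bar A(q_l)$ after identifying $\int_{\T} A(x,v(x,q_l))\,dx$ with $\bar A(q_l)$ via the cell equation (integrate $-\Delta_y v + \mathrm{div}_y A(y,v) = 0$ against $1$... in one dimension this directly gives $A(x,v(x,q_l)) - \bar A(q_l) = v'(x,q_l)$, so the mean of $A(\cdot,v(\cdot,q_l))$ is $\bar A(q_l)$). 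The same computation over $[L, L+1]$ with $L \to +\infty$ gives $\alpha = \bar A(q_r)$, so $\bar A(q_l) = \bar A(q_r) = \alpha$, and the second displayed conclusion $-u' + A(x,u) = \alpha$ is exactly the integrated identity.

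The main obstacle is the bookkeeping around the asymptotics: turning the $C^0$ convergence $u - v(\cdot,q_l) \to 0$ into control of the boundary terms $u(-L) - u(-L-1)$ and of the integral $\int A(x,u(x))\,dx$, and in particular making sure the mean of the periodic reference solution is correctly identified with $\bar A(q_l)$. This requires knowing the precise definition of $\bar A$ (the effective flux) from \cite{homogpara}, which presumably is $\bar A(p) = \langle A(\cdot, v(\cdot,p))\rangle$ — in which case the identification is immediate and the argument is clean; one only needs the $W^{1,\infty}$ bound on $u$ plus dominated convergence to pass to the limit in the integral, and the periodicity of $v(\cdot,q_l)$ to see that $u(-L)-u(-L-1) \to v(-L,q_l) - v(-L-1,q_l) = 0$ (equal values at points one period apart, up to a term tending to $0$). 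No regularity beyond $u \in W^{1,\infty}$ is needed since the equation is only used in integrated form.
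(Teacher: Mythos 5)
Your proposal is correct and follows essentially the same route as the paper: integrate the equation once to get $-u'+A(x,u)=\alpha$, use that $v(\cdot,q_l)$, $v(\cdot,q_r)$ satisfy $-v'+A(x,v(x,p))=\bar A(p)$ with $\bar A(p)=\mean{A(\cdot,v(\cdot,p))}$, and identify $\alpha$ by averaging over a unit interval sent to $\pm\infty$, using the local Lipschitz continuity of $A$ in its second variable and the periodicity to kill the boundary and flux-difference terms. The paper merely organizes the bookkeeping slightly differently (it integrates the difference of the two first-order identities over $[x_r,x_r+1]$ and bounds the result by $3\delta$), which is equivalent to your limiting argument.
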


\begin{proof}
 We deduce from the differential equation that there exists a constant $C$ such that
$$
-u' + A(x,u)=C,
$$
and the goal is to prove that $\bar A (q_r) = C =\bar A (q_l)$. We recall first that for all $p\in \R$, $v(\cdot,p)$ is a solution of
$$
- v' (x, p) + A(x,v(x,p ))=\bar A (p).
$$
Indeed, integrating \eqref{cell} on $\R$, we infer that for all $p\in \R$ there exists a constant $C_p$ such that
$$
\forall x\in \R,\quad- v' (x, p) + A(x,v(x,p ))=C_p.
$$
Taking the average of the above equality over a period, we deduce that $C_p=\bar A(p)$.

As a consequence, we have
\be\label{eq:u-vr}
- \frac{d}{dx}\left( u (x) - v(x, q_r)\right) +\left[ A(x,u(x)) - A(x,v(x,q_r)) \right] = C- \bar A(q_r)
\ee
Now, let $\delta>0$ arbitrary. There exists $x_r>0$ such that
$$   x\geq x_r \Rightarrow \left( \left| u(x) - v(x, q_r) \right|\leq \delta,\quad \left|A(x, u(x)) -A(x,v(x, q_r)) \right|\leq \delta\right).
$$
Integrating \eqref{eq:u-vr} on the interval $[x_r, x_r + 1]$, we deduce that
$$
|C- \bar A(q_r)|\leq 3 \delta.
$$
Since the above inequality is true for all $\delta>0$, we infer that $C=\bar A(q_r) $. The other equality is treated similarly.

\end{proof}

\begin{rem}
Notice that couples $(p_l,p_r)$ such that $p_l\neq p_r$ and $\bar A(p_l)=\bar A(p_r)$ do not always exist. Indeed, consider the case of a linear flux $A(x,v)=a(x) v$, with $a\in\mathcal C^1(\T)$. Then, for all $p\in\R$, we have $v(x,p)=pm(x)$, where $m$ is the unique probability measure on $\T$ satisfying
$$
-m''(x) + \frac{d}{dx}(a(x) m(x))=0,\quad x\in \T.
$$ 
The positivity of $m$ is a consequence of the Krein-Rutman Theorem; we refer to \cite{homogpara} for more details.

Therefore, for all $p\in\R$,
$$
\bar A(p)=\mean{a v(\cdot, p)}=p \mean{am}.
$$
Hence, as long as $\mean{am}\neq 0$, $\bar A(p)\neq \bar A(q)$ for all $p,q\in\R$ such that $p\neq q$. In particular, if $a$ is a non-zero constant, assumption \eqref{hyp:plpr} is never satisfied.
\label{rem:linear}
\end{rem}

\subsection{Proof of Proposition \ref{prop:ex_shock}}

We begin with the \textit{a priori} bound \textbf{(ii)}, from which we deduce that $u$ is a global solution.

The inequality \textbf{(ii)} follows directly from classical results in differential equations; indeed, assume that there exists $x_0\in I$ such that
$$
u(x_0)\geq v_+(x_0);
$$
since $u(0)<v_+(0)$, there exists $x_1\in [0, x_0]$ such that $u(x_1)=v_+(x_1)$. But $u$ and $v_+$ are solutions of the same differential equation, whence the Cauchy-Lipschitz Theorem implies that $u=v_+$, which is false. Thus
$$
u(x) < v_+(x)\quad  \forall x\in I.
$$
The lower bound is proved in the same way.

As a consequence, we deduce that $u$ remains bounded on its (maximal) interval of existence $I$. Using once again the Cauchy-Lipschitz Theorem, we infer that $I=\R$, and thus $u$ is a global solution.

We now tackle the core of Proposition \ref{prop:ex_shock}. First, since the flux $A$ is $\T$-periodic, the function $u(\cdot +1)$ is also a solution of equation \eqref{eq:visc-shock}. Hence the function $x\mapsto u(x+1) - u(x)$ keeps a constant sign on $\R$, which entails in particular that for all $x\in \R$, the sequences $(u(x\pm k))_{k\in \N}$ are monotonous. Consider for instance the sequence of functions
$$
u_k: x\in[0,1]\mapsto u(x+k).
$$
According to the above remarks, the sequence $(u_k)$ is monotonous and bounded in $L^\infty$; hence for all $x\in [0,1]$, $u_k(x)$ has a finite limit, which we denote by $u_\infty(x)$. Moreover, thanks to the uniform bound \textbf{(ii)} and the differential equation \eqref{eq:visc-shock}, $u$ belongs to $W^{1,\infty}(\R)$, and thus the sequence $u_k$ is uniformly bounded (with respect to $k$) in $W^{1,\infty}([0,1])$. Thus $u_\infty\in W^{1,\infty}([0,1])$, and $u_\infty$ is a continuous function. According to Dini's Theorem, we eventually deduce that $u_k$ converges  towards $u_\infty$ in $L^\infty([0,1])$. Notice that $u_\infty$ is periodic by definition, and passing to the limit in equation \eqref{eq:visc-shock}, we deduce that $u_\infty$ is a solution of \eqref{eq:visc-shock}.
Hence $u_\infty$ belongs to $W^{1,\infty}(\T)$ and satisfies
$$
- u_{\infty}'' + \frac{d}{dx} (A(x, u_{\infty}(x))) =0,
$$
which means exactly that $u_{\infty}$ is a periodic solution of equation \eqref{cell}; according to Proposition \ref{prop:cellpb}, there exists $p\in \R$ such that $u_\infty=v(\cdot,p).$ Eventually, since $u_\infty$ is a solution of \eqref{eq:visc-shock}, we infer that $\alpha=\bar A(p).$ To sum up, we have proved that there exists $p\in [p^-,p^+]$, such that $\bar A(p)=\bar A (p^\pm)$, and such that
$$
\lim_{k\to\infty} \sup_{x\in [0,1]}\left|u (x+k) - v(x,p) \right| =0.
$$
The above convergence is strictly equivalent to $u(x) - v(x,p)\to 0$ as $x\to\infty,$ and thus the third point of the Proposition is proved. The limit as $x\to-\infty$ is treated similarly.

\subsection{Further results on viscous shocks}

\label{ssec:aux_shock}

We have gathered in this paragraph some  results which will be important in the proof of Theorem \ref{thm:stab_shock1}. The first lemma gives a criterion allowing us to distinguish between the asymptotic states at $\pm\infty$.

\begin{lem}
Let $p_l,p_r\in\R$ such that $\bar A(p_l)=\bar A(p_r)$, and let $U$ be a shock profile such that
$$
\lim_{x\to-\infty}\left[ U(x)-v(x,p_l) \right]= \lim_{x\to+\infty}\left[ U(x)-v(x,p_r) \right]=0.
$$ 
Then 
$$
   \mean{ \p_v A(\cdot, v(\cdot,p_l))}\geq 0,\quad
   \mean{ \p_v A(\cdot, v(\cdot,p_r))}\leq 0.
$$
Moreover, if one of the above inequalities is strict, then  $U$ converges exponentially fast toward the corresponding solution of equation \eqref{cell}; for instance, if
$$
\bar a_r:=\int_{\T} \p_v A(y, v(y,p_r))\:dy <0,
$$
then for all $a\in(0,-\bar a_r)$, there exists a constant $C_a$ such that for all $y\in[0,\infty)$,
$$
\left| U(y) - v(y,p_r) \right|\leq C_a \exp(-ay).
$$
\label{prop:sens}

\end{lem}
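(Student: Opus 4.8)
The plan is to work with the difference $w(x) := U(x) - v(x,p_r)$ on $[0,\infty)$ and to extract an ODE that $w$ satisfies, then analyze its sign and decay. Since $U$ is a shock profile, by Lemma \ref{lem:CN} we have $-U' + A(x,U(x)) = \alpha = \bar A(p_r)$, and $v(\cdot,p_r)$ satisfies $-v'(x,p_r) + A(x,v(x,p_r)) = \bar A(p_r)$. Subtracting, $w$ solves the linear-in-$w$ (nonautonomous) ODE
\[
w'(x) = A(x, v(x,p_r) + w(x)) - A(x,v(x,p_r)) = c(x) w(x),
\]
where $c(x) = \int_0^1 \p_v A\big(x, v(x,p_r) + \theta w(x)\big)\,d\theta$ is continuous and bounded (using $U \in W^{1,\infty}$, the $W^{2,q}$ bound on $v(\cdot,p_r)$ from Proposition \ref{prop:cellpb}(i), and $\p_v A \in W^{1,\infty}_{\mathrm{loc}}$). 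Hence $w(x) = w(0)\exp\big(\int_0^x c(s)\,ds\big)$, and since $w(x)\to 0$ as $x\to+\infty$, either $w\equiv 0$ (i.e. $U = v(\cdot,p_r)$ already, in which case everything is trivial) or $\int_0^x c(s)\,ds \to -\infty$.

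The next step is to relate the average of $c$ to $\bar a_r = \mean{\p_v A(\cdot,v(\cdot,p_r))}$. Because $v(\cdot,p_r)$ is $1$-periodic and $w(x)\to 0$, one has $c(x) - \p_v A(x + \lfloor x\rfloor\cdot 0\dots)$... more cleanly: write $c(x) = \p_v A(x, v(x,p_r)) + \rho(x)$ where $|\rho(x)| \le \omega(|w(x)|)$ for a modulus of continuity $\omega$ with $\omega(0)=0$ (using local uniform continuity of $\p_v A$ and the uniform bound on $U$, $v$); since $w(x)\to 0$, $\rho(x)\to 0$. Then
\[
\frac{1}{n}\int_0^n c(s)\,ds = \frac{1}{n}\int_0^n \p_v A(s,v(s,p_r))\,ds + \frac{1}{n}\int_0^n \rho(s)\,ds \xrightarrow[n\to\infty]{} \bar a_r + 0 = \bar a_r,
\]
using periodicity of $s\mapsto \p_v A(s,v(s,p_r))$ to evaluate the Cesàro average as the mean over one period. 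If $w\not\equiv 0$, then $\int_0^n c \to -\infty$ forces $\bar a_r \le 0$, which is the claimed inequality $\mean{\p_v A(\cdot,v(\cdot,p_r))}\le 0$ (when $w\equiv 0$ the inequality is not forced by this argument, but then $U=v(\cdot,p_r)$ and the statement about $p_r$ would need $p_l=p_r$ to even have a genuine shock — so effectively $w\not\equiv 0$). The argument at $-\infty$ is symmetric, with $c(x)\to$ mean $\bar a_l := \mean{\p_v A(\cdot,v(\cdot,p_l))}$ along the average over $(-n,0)$, and the requirement that $\int_x^0 c \to -\infty$ as $x\to -\infty$ (so that $w(x) = w(0)\exp(-\int_x^0 c) \to 0$) gives $\bar a_l \ge 0$.

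For the exponential-decay claim, suppose $\bar a_r < 0$ and fix $a \in (0, -\bar a_r)$. Choose $X$ so large that for $x \ge X$ we have $|\rho(x)| \le \delta$ with $\delta$ chosen so that the running average $\frac{1}{x - X}\int_X^x c(s)\,ds \le -a$ for all $x$ large; this follows because $\frac1{x-X}\int_X^x \p_v A(s,v(s,p_r))\,ds \to \bar a_r$ and $|\rho|\le\delta$ on $[X,\infty)$, so the limsup of the running average is $\le \bar a_r + \delta < -a$ for $\delta$ small. Hence for $x \ge X'$ (some larger threshold), $\int_X^x c(s)\,ds \le -a(x - X)$, giving $|w(x)| = |w(X)| \exp\big(\int_X^x c\big) \le |w(X)| e^{aX} e^{-ax}$, which is the desired bound with $C_a := |w(X)| e^{aX}$ (enlarging $C_a$ to cover the compact interval $[0,X']$).

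The main obstacle is the bookkeeping around the running Cesàro averages: one must be careful that it is not merely $\frac1n\int_0^n c \to \bar a_r$ that is needed, but the sharper statement that $\int_X^x c(s)\,ds \le -a(x-X)$ eventually, which requires controlling the oscillation of the periodic part $\p_v A(\cdot,v(\cdot,p_r))$ over partial periods (bounded since it is continuous periodic) together with the smallness of $\rho$ on a tail. Making the modulus-of-continuity estimate $|\rho(x)| \le \omega(|w(x)|)$ precise — i.e. that $\p_v A$ is uniformly continuous on the relevant compact range of arguments, which holds because $U$ and $v(\cdot,p_r)$ are bounded and $\p_v A \in W^{1,\infty}_{\mathrm{loc}}$, hence Lipschitz on compacts — is the one place where the structural hypotheses on $A$ are genuinely used.
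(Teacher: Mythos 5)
Your argument is correct and is essentially the paper's own proof: you write $w=U-v(\cdot,p_r)$ as the solution of the linear ODE $w'=b\,w$ with $b$ the $v$-averaged $\p_v A$, observe $b-\p_v A(\cdot,v(\cdot,p_r))\to0$ at $+\infty$, use the explicit exponential formula to force $\int_0^x b\to-\infty$ (unless $w\equiv0$, a degenerate case the paper also tacitly excludes), and compare with the periodic mean $\bar a_r$ to obtain both the sign condition and the rate $-a$; your Ces\`aro-average phrasing is just a repackaging of the paper's floor-function estimate. The only blemishes are cosmetic: at $-\infty$ the coefficient must be built from $v(\cdot,p_l)$ and the correct requirement is $\int_x^0 c\to+\infty$ (not $-\infty$), and your modulus-of-continuity step needs continuity of $\p_v A$ in $v$ on compacts, which the paper uses implicitly as well (and which holds under \eqref{hyp:A1}).
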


\begin{proof}
 
Since $U$ is a shock profile and $v(p_l)$, $v(p_r)$ are solutions of equation \eqref{cell}, we have
$$\begin{aligned}
 U'(x)= A(x,U(x)) - \alpha,\\
\p_x v(x,p_l) = A(x,v(x,p_l)) - \alpha,\\
\p_x v(x,p_r) = A(x,v(x,p_r)) - \alpha,
\end{aligned}
$$
where $\alpha$ denotes the common value of $\bar A(p_l)$ and $\bar A(p_r)$.

Consequently, the function $U-v(p_r)$, for instance, satisfies the linear equation
\be
\p_x(U(x)- v(x,p_r))= b(x) (U(x)- v(x,p_r)),\label{eq:U-v_r}
\ee
where 
$$
b(x)=\int_0^1 \p_v A(x,\tau U(x) + (1-\tau) v(x,p_r))\:d\tau.
$$
Notice that since $U$ converges towards $v(p_r)$ as $x\to+\infty$, we obtain
\be
\lim_{x\to+\infty} \left[ b(x) -\p_v A(x,v(x,p_r))  \right]=0.\label{b_asympt1}
\ee

On the other hand, equation \eqref{eq:U-v_r} implies that
$$
U(x) - v(x,p_r)= \left[U(0) - v(0,p_r)\right]\exp\left( \int_0^x b(y)\:dy \right).
$$
Once again, since $U-v(p_r)$ converges towards zero, we infer that
\be
\lim_{x\to+\infty} \int_0^x b(y)\:dy =-\infty.\label{b_asympt2}
\ee
The first statement of the proposition follows easily from \eqref{b_asympt1}, \eqref{b_asympt2}; indeed, assume that $\bar a_r>0$. Then there exists a positive number $K$ such that
$$
x\geq K \Rightarrow b(x) -  \p_v A(x,v(x,p_r))\geq -\frac{\bar a_r}{2},
$$
and consequently, using the fact that $x\mapsto\p_v A(x,v(x,p_r))$ is a periodic function, we obtain for $x\geq K$
\begin{eqnarray*}
\int_K^x b(y) \:dy &\geq & \int_K^x \p_v A(y,v(y,p_r)) \:dy - (x-K)\frac{\bar a_r}{2}\\
&\geq & \lfloor x-K\rfloor \bar a_r- x\frac{\bar a_r}{2} -C\\
&\geq & x\frac{\bar a_r}{2} -C.
\end{eqnarray*}
The above inequality is obviously in contradiction with \eqref{b_asympt2}. Hence $\bar a_r\leq 0$, which proves the first statement in the proposition.

Now, assume that $\bar a_r< 0$, and choose $a\in (0, -\bar a_r)$ arbitrary. As before, we pick $K>0$ such that
$$
x\geq K \Rightarrow b(x) -  \p_v A(x,v(x,p_r))\leq -\bar a_r-a.
$$
We then obtain an inequality of the type
\begin{eqnarray*}
\int_K^x b(y) \:dy &\leq &  (-\bar a_r-a)(x-K) +  \lfloor x-K\rfloor \bar a_r + C\\
&\leq & -a x + C.
\end{eqnarray*}
Inserting this inequality back into the formula for $U-v(p_r)$ yields the exponential convergence result.

\end{proof}

The next result is concerned with the integrability of the difference between two shock profiles. 

\begin{lem}
Let $p_l,p_r\in \R$ such that $p_l\neq p_r$ and $\bar A(p_l)=\bar A(p_r)$, and let $U,V$ be two shock profiles with asymptotic states $v(p_l)$, $v(p_r)$.

Then $U-V\in L^1(\R).$

\label{lem:U-V_L1}
\end{lem}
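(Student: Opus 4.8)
The plan is to exploit the fact that both $U-v(p_r)$ and $V-v(p_r)$ satisfy linear first-order ODEs of the form $\p_x w = b(x) w$ with coefficients $b$ that are asymptotically periodic at $+\infty$, and similarly at $-\infty$ with the state $v(p_l)$. Near $+\infty$ I would distinguish two cases according to the sign of $\bar a_r := \mean{\p_v A(\cdot, v(\cdot,p_r))}$. If $\bar a_r < 0$, Lemma \ref{prop:sens} already gives that $U - v(p_r)$ and $V - v(p_r)$ both decay exponentially, hence $U - V$ decays exponentially and in particular is integrable on $[0,\infty)$. The delicate case is $\bar a_r = 0$; there I would write, for $x\geq 0$,
$$
U(x) - V(x) = \big(U(x)-v(x,p_r)\big) - \big(V(x)-v(x,p_r)\big),
$$
and estimate each term using the integral representation $U(x)-v(x,p_r) = [U(0)-v(0,p_r)]\exp\!\big(\int_0^x b_U(y)\,dy\big)$, with an analogous formula for $V$. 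Writing $b_U(x) = \p_v A(x,v(x,p_r)) + r_U(x)$ with $r_U(x)\to 0$, and using that $\int_0^x \p_v A(y,v(y,p_r))\,dy$ stays bounded (since its integrand is periodic with zero mean, so the antiderivative is periodic), the quantitative decay must come from the remainder $r_U$, i.e.\ from a more refined analysis of how fast $U$ approaches $v(p_r)$.

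The key point in the borderline case is therefore to obtain a decay rate from a second-order expansion. I would substitute $w = U - v(p_r)$ into the two ODEs for $U'$ and $\p_x v(\cdot,p_r)$ and Taylor-expand $A(x,v(x,p_r)+w)$ to second order, getting
$$
w' = \p_v A(x,v(x,p_r))\,w + \tfrac12 \p_{vv} A(x,v(x,p_r))\,w^2 + o(w^2).
$$
Averaging the "Fredholm-type" information over a period (the linear part integrates to $\bar a_r = 0$), one extracts that $w$ is driven to leading order by the sign-definite quadratic term, which forces $w(x) \sim c/x$ as $x\to+\infty$ with $c$ of a definite sign — in particular $w(x)>0$ eventually, say, and $w(x) = O(1/x)$. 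This is not yet integrable, so I would push to the next order, or better, set $z = 1/w$ and derive $z' = -\tfrac12 \p_{vv}A(x,v(x,p_r)) + o(1)$, whose averaged version gives $z(x) = \gamma x + O(\log x)$ for an explicit $\gamma\neq 0$ (the average of $-\tfrac12\p_{vv}A$), hence $w(x) = 1/(\gamma x) + \text{lower order}$. Both $U-v(p_r)$ and $V-v(p_r)$ then have the \emph{same} leading asymptotics $1/(\gamma x)$, so their difference is $o(1/x)$; iterating this cancellation once more (the $\log x / x^2$ correction is also common to both) yields $U(x)-V(x) = O(1/x^2)$ near $+\infty$, which is integrable. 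The same argument at $-\infty$, with $v(p_l)$ and $\bar a_l := \mean{\p_v A(\cdot,v(\cdot,p_l))}$, handles the other tail; on any compact interval $U-V$ is bounded. Combining the three regions gives $U - V \in L^1(\R)$.

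The main obstacle is exactly the marginal case $\bar a_r = 0$ (equivalently $\bar a_l = 0$): here the linearization gives no decay, and one must use convexity-type information from $\p_{vv}A$ (which, under Oleinik's condition \eqref{hyp:Oleinik}, one expects to have a definite sign relevant to the dynamics) to get the $1/x$ rate, and then crucially observe that the leading $1/(\gamma x)$ and the next $\log x/x^2$ terms are \emph{universal} — independent of which shock profile one picks — so that they cancel in $U-V$ and leave an integrable remainder. If instead one only has $\bar a_r\le 0$ with equality, an alternative is to avoid explicit asymptotics: monotonicity of shock profiles (established in paragraph \ref{ssec:aux_shock}) together with the conservation/contraction structure may let one compare $U-V$ with $v(\cdot,q)-v(\cdot,q')$ for nearby $q,q'$ and conclude integrability from the integrability of differences of cell solutions. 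Either way, the borderline decay estimate is where the real work lies; the exponential case and the compact part are routine.
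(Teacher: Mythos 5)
There is a genuine gap. Your argument hinges on a fine asymptotic analysis in the borderline case $\bar a_r=0$, and that analysis is not available under the hypotheses of the lemma: the flux is only $A\in W^{1,\infty}_{\text{loc}}$, so the second-order Taylor expansion $w'=\p_v A\,w+\tfrac12\p_{vv}A\,w^2+o(w^2)$ need not make sense, and even when $A$ is smooth you additionally need the non-degeneracy $\mean{\p_{vv}A(\cdot,v(\cdot,p_r))}\neq 0$ (the lemma does not assume Oleinik's condition, and higher-order degeneracies $\mean{\p_{vv}A}=0$ would force yet more cases). Moreover, the crucial step --- that the $1/(\gamma x)$ and $\log x/x^2$ terms are \emph{universal} so that $U-V=O(1/x^2)$ --- is asserted rather than proved; with periodic, non-autonomous coefficients this cancellation requires a careful expansion you have not carried out. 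Finally, your fallback of comparing $U-V$ with $v(\cdot,q)-v(\cdot,q')$ cannot work as stated: for $q\neq q'$ that difference is periodic and bounded away from zero, hence not integrable on $\R$, so it gives no control.

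The paper's proof avoids decay rates entirely and is much more elementary. Assuming $U(0)<V(0)$ (equality gives $U=V$ by Cauchy--Lipschitz), one uses the monotonicity of the sequence $(U(k))_{k\in\Z}$ and its convergence to the asymptotic states to find an integer $k_0$ with $U(0)<V(0)<U(k_0)$, whence $U\leq V\leq \tau_{k_0}U$ by ordering of solutions of the same ODE. It then suffices to show $\tau_k U-U\in L^1(\R)$; since this difference keeps a constant sign, one only needs $\int_{-A}^{A}(\tau_k U-U)$ bounded in $A$, and a change of variables shows this integral equals $\int_A^{k+A}U-\int_{-A}^{k-A}U$, which is bounded by $2k\|U\|_{L^\infty}$. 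Squeezing $V$ between $U$ and $\tau_{k_0}U$ concludes. The lesson is that the integrability comes from a translation/telescoping identity, not from pointwise decay of $U-v(p_r)$, which is exactly what defeats the marginal case in your approach.
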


\begin{proof}
Set
$$
U_0:=U(0),\ V_0:=V(0),
$$ 
and assume for instance that $U_0\leq V_0$. If $U_0=V_0$, then $U=V$ according to the Cauchy-Lipschitz Theorem (see the proof of Proposition \ref{prop:ex_shock}), and the result is obvious. Thus we assume from now on that $U_0<V_0$. As a consequence, we have
$$
\forall y\in \R,\quad v(y,\min(p_l, p_r)) < U(y) <V(y)<v(y,\max(p_l, p_r)).
$$
We recall that the sequence $(U(k))_{k\in\Z}$ is monotonous, and converges towards $v(0,p_l)$ (resp. $v(0,p_r)$) as $k\to -\infty$ (resp. $k\to +\infty$). Hence, there exists an integer $k_0\in \Z$ such that
\be
U_0< V_0 < U(k_0),
\label{in:UV}\ee
from which we infer that $U\leq V\leq \tau_{k_0} U$.

As a consequence, it is sufficient to prove that $\tau_{k} U-U$ is integrable, for all $k\in\Z$.

First, remember that $\tau_k U -U$ has a constant sign, since $\tau_k U$ and $U$ are both shock profiles. Thus we only have to prove that the family
$$
\int_{-A}^A (\tau_k U -U)
$$
remains bounded as $A\to \infty$. A simple calculation leads to
\begin{eqnarray*}
 \int_{-A}^A (\tau_k U -U)&=&\int_{-A}^A U(y+k) \:dy - \int_{-A}^A U(y) \:dy\\
&=&\int_{k-A}^{k+A} U(y) \:dy - \int_{-A}^A U(y) \:dy\\
&=&\int_A^{k+A}U(y) \:dy  - \int_{-A}^{k-A}U(y) \:dy .
\end{eqnarray*}
Thus, recalling that $U$ is a bounded function, we obtain
$$
\sup_{A>0}\left|\int_{-A}^A (\tau_k U -U) \right|\leq 2 k \|U \|_{L^\infty(\R)}.
$$
We deduce that $\tau_k U - U \in L^1(\R)$ for all $k\in\Z$, and eventually that $U-V\in L^1(\R)$ according to \eqref{in:UV}.

\end{proof}

The  next result is in fact the first part of the statement of Theorem \ref{thm:stab_shock1}:

\begin{lem} Let $p_l,p_r\in \R$ such that the assumptions of Theorem \ref{thm:stab_shock1} are satisfied, and let 
 $U$ be a viscous shock  connecting $v(p_l)$ to $v(p_r)$.

Let $u\in U + L^1$. Then there exists a unique shock profile $V$,  with asymptotic states $v(p_l)$ and $v(p_r)$, and such that
$$
u\in V + L^1_0(\R).
$$
\label{lem:L10}
\end{lem}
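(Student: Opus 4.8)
The plan is to parametrize the family of shock profiles by a single real parameter, and then to use the conservation/contraction structure together with Lemma \ref{lem:U-V_L1} to pick out the unique member of the family whose difference with $u$ has zero mass. First I would recall, as in the proof of Proposition \ref{prop:ex_shock}, that by the Cauchy--Lipschitz theorem a shock profile $W$ is entirely determined by the single value $W(0)$, and that varying $W(0)$ in the open interval $\bigl(v(0,\min(p_l,p_r)),v(0,\max(p_l,p_r))\bigr)$ produces a one-parameter family $(U_\lambda)$ of shock profiles, all connecting $v(p_l)$ to $v(p_r)$ (here one uses Corollary \ref{cor:oleinik} and Oleinik's condition \eqref{hyp:Oleinik} to guarantee that the asymptotic states are exactly $v(p_l),v(p_r)$ for \emph{every} choice of initial value strictly between the two periodic barriers, not just some choice). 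By uniqueness of solutions to the ODE, the profiles are strictly ordered: $\lambda<\mu$ implies $U_\lambda(x)<U_\mu(x)$ for all $x$. I would take $U_0:=U$ itself as one reference profile.

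Next I would define, for each admissible profile $W$ in this family, the quantity
$$
\Phi(W):=\int_\R \bigl(u(x)-W(x)\bigr)\,dx,
$$
which makes sense: $u-U\in L^1$ by hypothesis, and $U-W\in L^1$ by Lemma \ref{lem:U-V_L1}, so $u-W\in L^1$ for every profile $W$ in the family. Reparametrizing by $s:=W(0)$, set $g(s):=\Phi(U_{s})$. The key monotonicity and continuity properties are: (a) $s\mapsto g(s)$ is strictly decreasing, because if $s<s'$ then $U_s<U_{s'}$ pointwise with $U_{s'}-U_s$ a fixed nonzero $L^1$ function of constant (positive) sign, so $g(s)-g(s')=\int_\R (U_{s'}-U_s)>0$; (b) $g$ is continuous in $s$, by continuous dependence of ODE solutions on initial data together with a dominated-convergence argument — the dominating function being provided by $U-v(\cdot,\min(p_l,p_r))$ and $v(\cdot,\max(p_l,p_r))-U$, both in $L^1$ on the relevant half-lines by the exponential (or at least integrable) convergence from Lemma \ref{prop:sens} and Lemma \ref{lem:U-V_L1}; (c) $g$ ranges over all of $\R$ as $s$ runs over the open barrier interval, since pushing $W(0)$ down to $v(0,\min(p_l,p_r))$ makes $\int_\R(U-W)$ (a sum of integrals of non-negative integrable functions over $\R$) blow up to $+\infty$, and symmetrically the other endpoint gives $-\infty$; here again one needs the ordering plus the fact that the total mass $\int_\R (U_{s'}-U_s)$ of the difference of two profiles, as computed via a telescoping argument like the one in Lemma \ref{lem:U-V_L1} with non-integer shifts, tends to infinity as the profiles separate toward opposite barriers. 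By the intermediate value theorem there is a unique $s_*$ with $g(s_*)=0$; set $V:=U_{s_*}$. Then $u-V\in L^1(\R)$ with $\int_\R(u-V)=0$, i.e. $u\in V+L^1_0(\R)$, and uniqueness of $V$ is exactly strict monotonicity of $g$.

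The main obstacle I expect is step (c) — establishing that $g$ is genuinely surjective onto $\R$, i.e. that the mass of the difference between two shock profiles in the family becomes unbounded as their initial values approach the two periodic barriers $v(0,\min(p_l,p_r))$ and $v(0,\max(p_l,p_r))$. The telescoping computation in Lemma \ref{lem:U-V_L1} only gives a crude upper bound $2k\|U\|_\infty$ for integer shifts and says nothing about lower bounds; here one instead needs that $U_s - v(\cdot,\min(p_l,p_r))$ has \emph{positive, bounded-below} mass contributions on long intervals as $s$ approaches the lower barrier, which requires a lower-bound companion to the exponential-convergence estimate of Lemma \ref{prop:sens} — essentially, control from below on how slowly a profile peels off its asymptotic periodic state, or alternatively a compactness/limiting argument showing the pointwise limit of $U_s$ as $s\downarrow v(0,\min(p_l,p_r))$ is $v(\cdot,\min(p_l,p_r))$ uniformly on compacts, combined with Fatou's lemma to force the mass to diverge. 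The continuity in step (b) is routine once the integrable domination is in place, and the strict monotonicity in (a) is immediate from the ODE comparison already used in Proposition \ref{prop:ex_shock}.
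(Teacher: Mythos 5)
Your proposal is correct and is essentially the paper's own proof: parametrize the shock profiles by their value $\xi$ at $x=0$, observe that $F(\xi)=\int_\R\bigl(u-V_\xi\bigr)$ is well defined (by Lemma \ref{lem:U-V_L1}), continuous and strictly decreasing (which also gives uniqueness, since the difference of two profiles keeps a constant sign), and conclude by the intermediate value theorem. The obstacle you flag in step (c) is resolved in the paper exactly by your ``alternative'' route and needs no lower bound on how slowly profiles peel off their asymptotic states: suitable integer translates $\tau_k U$ belong to the same one-parameter family and squeeze $V_\xi$ onto $v(\cdot,p^-)$ (resp. $v(\cdot,p^+)$) uniformly on compact sets as $\xi$ tends to the corresponding endpoint, and then monotone convergence gives $F(\xi)\to\int_\R\bigl(u-v(\cdot,p^\mp)\bigr)$, which is automatically $+\infty$ at the lower endpoint and $-\infty$ at the upper one because $u$ differs from $U$ by an $L^1$ function while $U$ connects two distinct periodic states.
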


\begin{proof}According to Lemma \ref{lem:U-V_L1}, we already know that for every shock profile $V$, we have $u-V\in L^1.$ Hence, the question is to find a shock profile $V$ such that
\be
\int_{\R} (u-V)=0.\label{u-V}
\ee
Notice that such a shock profile is necessarily unique: indeed, the Cauchy-Lipschitz uniqueness principle entails that the difference of two shock profiles is a function which keeps a constant sign. Hence, if $V_1,V_2$ are shock profiles satisfying
$
\int_{\R}(V_1-V_2)=0,
$
then $V_1=V_2$.

We now prove that there exists a shock profile $V$ such that $u-V\in L^1_0(\R)$. As before, we set $p^-=\min(p_l,p_r)$, $p^+=\max(p_l,p_r)$.
For all $\xi\in(v(0,p^-), v(0,p^+))$, we denote by $V_\xi$ the solution of 
$$
\begin{aligned}
V'(x)=A(x,V(x)) - \bar A(p_l) ,\\
V_{|x=0}=\xi.
\end{aligned}
$$
Then, according to Proposition \ref{prop:ex_shock}  and Lemma \ref{lem:U-V_L1}, for all $\xi$, $V_\xi$ is a shock profile connecting $v(p_l)$ to $v(p_r)$, and additionally $u-V_\xi\in L^1(\R).$
Moreover, if $\xi>\xi'$, then $V_\xi(x)>V_{\xi'}(x)$ for all $x$; hence the function
$$
F:\xi\in (v(0,p^-), v(0,p^+)) \mapsto \int_{\R}(u(x)- V_\xi(x))\:dx
$$
is well-defined and decreasing with respect to $\xi$; using classical results on differential equations, it can easily be proved that $F$ is continuous. We wish to find $\xi_0$ such that $F(\xi_0)=0$; thus it suffices to show that
$$
\lim_{\xi \to  v(0,p^-)_+} F(\xi)>0\text{ and } \lim_{\xi \to  v(0,p^+)_-} F(\xi)<0.
$$
The above result is a direct consequence of Lebesgue's monotone convergence Theorem and of the fact that 
\be
\forall x\in\R,\quad \lim_{\xi\to v(0,p^-)_+}V_\xi(x)= v(x,p^-).\label{conv_Uxi}
\ee
The same kind of result holds with $v(p^+)$. Indeed, let $R>0$ be arbitrary, and let $\e>0$. Without loss of generality, assume that $p_r=p^-$. Then there exists $K\in \N$ such that  
$$
x\geq K\Rightarrow v(x,p_r)\leq U(x) \leq v(x,p_r )+ \e.
$$
In particular, $\tau_{K + \lfloor R\rfloor + 1} U$ is a shock profile which satisfies
$$
\tau_{K + \lfloor R\rfloor + 1} U(x) \leq v(x,p_r )+ \e\quad\forall x\in[-R,R].
$$
Let $\bar \xi:=\tau_{K + \lfloor R\rfloor + 1} U (0)=  U (K + \lfloor R\rfloor + 1)$. The Cauchy-Lipschitz Theorem entails that $V_{\bar \xi}= \tau_{K + \lfloor R\rfloor + 1} U$. As a consequence, for all $\xi<\bar \xi,$ for all $x\in[-R,R]$, we have
$$
v(x,p_r)\leq V_\xi(x)\leq V_{\bar \xi}(x)\leq v(x,p_r) + \e.
$$
The convergence result \eqref{conv_Uxi} follows, and thus there exists a shock profile $V$ such that $u_0\in V + L^1_0(\R).$

\end{proof}

The next lemma allows us to replace inequality \eqref{hyp:u_0} by an inequality in which the upper and lower bounds are shock profiles, which will be useful in the proof of Theorem \ref{thm:stab_shock1} in Section \ref{sec:shock_stab:1}.

\begin{lem}
Let $p_l,p_r$ such that the hypotheses of Theorem \ref{thm:stab_shock1} are satisfied.
 Let $U$ be a shock profile  connecting $v(p_l)$ to $v(p_r)$.
Let $u\in L^\infty(\R)$ such that $u\in U + L^1_0(\R)$ and assume that for almost every $y\in \R$,
$$
v(y,\min(p_r, p_l))\leq u(y)\leq v(y,\max(p_l,p_r)).
$$

Let $\e>0$ be arbitrary. Then there exists a function $u^\e\in U + L^1_0(R)$, together with shock profiles $U^\e_\pm$ connecting $v(p_l)$ to $v(p_r)$, such that
$$
\|u-u^\e\|_{L^1}\leq \e,\quad
U^\e_-\leq u^\e\leq U^\e_+.
$$

\label{lem:up/lowshocks}
\end{lem}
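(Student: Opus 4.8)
The plan is to approximate $u$ by a function $u^\e$ which coincides with a translate of the shock profile $U$ outside a large compact interval, and is squeezed between two shock profiles everywhere. Fix $p^-=\min(p_l,p_r)$ and $p^+=\max(p_l,p_r)$, so that $v(\cdot,p^-)\le u\le v(\cdot,p^+)$ a.e. The starting observation is that since $U$ is a shock profile connecting $v(p_l)$ to $v(p_r)$, its integer translates $\tau_k U$ converge (locally uniformly, by the monotonicity and $W^{1,\infty}$-compactness argument used in the proof of Proposition \ref{prop:ex_shock}) to $v(\cdot,p_l)$ as $k\to\mp\infty$ and to $v(\cdot,p_r)$ as $k\to\pm\infty$, the direction depending on which of $p_l,p_r$ is larger. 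Consequently, given $\e>0$, I can choose an integer $k_1$ so large (and $k_2$ so negative) that $\tau_{k_1}U$ is within $\e$ (in sup norm on the relevant half-line) of the asymptotic state attained at $+\infty$, and similarly $\tau_{k_2}U$ near $-\infty$; more precisely I want $\tau_{k_1}U(y)\ge v(y,p^-)$ and $\tau_{k_2}U(y)\le v(y,p^+)$ to fail by at most a controlled amount, so that these translates provide barriers above $v(\cdot,p^-)$ near $+\infty$ and below $v(\cdot,p^+)$ near $-\infty$.

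The construction of $u^\e$ then goes as follows. Pick $R>0$ large. On $[-R,R]$ set $u^\e=u$; on $[R,\infty)$ set $u^\e$ equal to a suitable translate $\tau_{k}U$, and on $(-\infty,-R]$ equal to another translate $\tau_{k'}U$, choosing $k,k'$ and making a small further truncation/interpolation near $y=\pm R$ so that $u^\e$ is globally bounded between $v(\cdot,p^-)$ and $v(\cdot,p^+)$ — this uses the tail control above together with the fact that on $[-R,R]$ we already have $v(\cdot,p^-)\le u\le v(\cdot,p^+)$. Because $u-U\in L^1_0(\R)$ and $\tau_kU-U\in L^1(\R)$ for every $k$ (Lemma \ref{lem:U-V_L1}), we have $u^\e-U\in L^1(\R)$; the quantity $\|u-u^\e\|_{L^1}$ is the integral over $|y|>R$ of $|u-u^\e|$, and since $u-U$ and each $\tau_kU-U$ lie in $L^1$, this tail tends to $0$ as $R\to\infty$ provided $k,k'$ are kept bounded (they can: once the asymptotic states are approached, a fixed choice of $k_1,k_2$ suffices). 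So for $R$ large enough $\|u-u^\e\|_{L^1}\le\e/2$. To make $u^\e$ lie in $U+L^1_0$ rather than merely $U+L^1$, I correct its value of $\int(u^\e-U)$: by Lemma \ref{lem:L10} applied to $u^\e$ there is a unique shock profile $V$ with $u^\e\in V+L^1_0$, but I want the mass normalized against $U$ itself, so instead I adjust $u^\e$ on $[-R,R]$ by adding a small $L^1$ function of the appropriate sign and integral $\int_\R(u-u^\e)$ (which is $O(\e)$), keeping the pointwise bounds $v(\cdot,p^-)\le u^\e\le v(\cdot,p^+)$ intact on $[-R,R]$ since $u$ itself is strictly between the barriers on a set of positive measure; this is possible because the "room" between $u$ and the barriers has positive $L^1$ mass. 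Finally, define $U^\e_-$ and $U^\e_+$ as the shock profiles obtained from Proposition \ref{prop:ex_shock} with initial data $U^\e_\pm(0)$ chosen so that $U^\e_-\le u^\e\le U^\e_+$: e.g. take $U^\e_-=\tau_{k_2}U$ and $U^\e_+=\tau_{k_1}U$ themselves, which by construction sandwich $u^\e$ (on $[-R,R]$ because $k_2\ll 0\ll k_1$ push the translates down/up past $v(\cdot,p^-)$/$v(\cdot,p^+)$ there, and outside $[-R,R]$ because $u^\e$ is itself a translate lying between them); these are genuine shock profiles connecting $v(p_l)$ to $v(p_r)$ by Proposition \ref{prop:ex_shock}.

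The main obstacle I expect is the simultaneous bookkeeping of three requirements: keeping $u^\e$ trapped between the barriers $v(\cdot,p^-)$ and $v(\cdot,p^+)$ \emph{everywhere}, keeping $\|u-u^\e\|_{L^1}$ small, and enforcing the zero-mass normalization $u^\e\in U+L^1_0$ exactly. The tension is that the mass correction step must not destroy the pointwise bounds; this is why one needs the quantitative fact — which should be extracted from assumption \eqref{hyp:u_0} plus the strict inequalities $v(\cdot,p^-)<U<v(\cdot,p^+)$ of Proposition \ref{prop:ex_shock}(ii) — that $u$ is bounded away from the barriers on a set of positive measure with a definite amount of $L^1$ slack, so that a compactly supported correction of size $O(\e)$ can be absorbed. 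A secondary technical point is the gluing near $y=\pm R$: one should interpolate monotonically between $u$ and the chosen translate over a unit interval, using that both lie between the same barriers there, so no pointwise bound is violated and only an $O(1)$ contribution to the $L^1$ error is added, which is still $o(1)$ relative to the full tail as $R\to\infty$ — actually one must be a little careful and instead just \emph{define} $u^\e$ to equal the translate on $[R,\infty)$ with the understanding that $u^\e$ need not be continuous, since the statement only asks for an $L^1$-function, not a smooth one.
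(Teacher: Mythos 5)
There is a genuine gap, and it sits exactly at the point your lemma is supposed to resolve. You keep $u^\e=u$ on the central interval $[-R,R]$ (up to a mass correction that you only require to preserve the \emph{non-strict} bounds $v(\cdot,p^-)\le u^\e\le v(\cdot,p^+)$), and you then claim that translates $\tau_{k_1}U,\tau_{k_2}U$ sandwich $u^\e$ there because large translates are ``pushed past'' $v(\cdot,p^-)$ and $v(\cdot,p^+)$. That last claim is false: every shock profile connecting $v(p_l)$ to $v(p_r)$ solves the same first-order ODE $-W'+A(x,W)=\alpha$ as $v(\cdot,p^\pm)$, so by Cauchy--Lipschitz uniqueness it never touches them and lies \emph{strictly} between $v(\cdot,p^-)$ and $v(\cdot,p^+)$; translates of $U$ approach the barriers only from inside, never cross them. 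Since the hypothesis on $u$ is only the non-strict inequality \eqref{hyp:u_0}, $u$ may coincide with $v(\cdot,p^+)$ (or $v(\cdot,p^-)$) on a set of positive measure inside $[-R,R]$ — e.g.\ $u=v(\cdot,p^+)$ on $[-1,1]$, equal to $U$ far out, with the zero-mass normalization restored elsewhere — and then no shock profile $U^\e_+$ can satisfy $U^\e_+\ge u^\e=u$ on that set. For the same reason, the ``quantitative fact'' you hope to extract (that $u$ is bounded away from the barriers with definite $L^1$ slack) is simply not implied by the hypotheses.

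The missing idea, which is the core of the paper's proof, is to first \emph{replace} $u$ on the compact interval by a function $v^\e$ lying strictly inside the barriers with an explicit margin $\alpha^\e>0$, i.e.\ $v(\cdot,p_r)+\alpha^\e\le v^\e\le v(\cdot,p_l)-\alpha^\e$, at an $L^1$ cost at most $\e$ (possible because the set where $u$ is within $\delta$ of a barrier contributes at most $2\delta A^\e$ to the $L^1$ error), and to choose $\alpha^\e$ small compared with $\int_{|x|\le A^\e}(U-v(p_r))$ so that the subsequent zero-mass correction $\rho^\e$ (of size $O(\e)$) can be absorbed while keeping half of the margin. Only then can one place integer translates of $U$ below $v(\cdot,p_r)+\alpha^\e/2$ and above $v(\cdot,p_l)-\alpha^\e/2$ on $[-A^\e,A^\e]$, and combine them with $U$ (by sup/inf, using that shock profiles are totally ordered) to get $U^\e_\pm$ valid on all of $\R$, where $u^\e=U$ outside the interval. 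Your far-field and mass-bookkeeping steps are workable (taking $u^\e=U$ outside the interval is simpler than using translates), but without the margin-creating step the sandwich $U^\e_-\le u^\e\le U^\e_+$ cannot be achieved in general.
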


\begin{proof}
First, since $u-U\in L^1(\R)$, there exists a positive number $A^\e$ such that
$$
\int_{|x|\geq A^\e}|u-U|\leq \e.
$$
Hence, for $|x|\geq A^\e$, we take $u^\e(x)= U(x).$

The definition of $u^\e$ on the interval $[-A^\e, A^\e]$ is slighlty more technical, because of the various constraints bearing on $u^\e$.
Once again, we assume that $p_l>p_r$ in order to lighten the notation. We first consider a function $v^\e\in\mathcal C([-A^\e,A^\e])$ which satisfies
$$
\int_{|x|\leq A^\e} |u(x) - v^\e(x)|\:dx\leq \e
$$
and such that
$$
v(x,p_r)<v^\e(x)<v(x,p_l)\quad \forall x\in [-A^\e, A^\e].
$$
We denote by $\alpha^\e$ a positive number such that
$$
v(x,p_r) + \alpha^\e\leq v^\e(x) \leq v(x,p_l)-\alpha^\e\quad\forall x\in[-A^\e, A^\e].
$$
Notice that $\alpha^\e$ can be chosen as small as desired. For further purposes, we choose $\alpha^\e$ so that $$\alpha^\e A^\e\leq 2 \int_{|x|\leq A^\e}(U-v(p_r)).$$

The constraint $u^\e\in U + L^1_0(\R)$ entails that the function $u^\e$ should satisfy
$$
\int_{|x|\leq A^\e} (u^\e-U)=0.
$$
However, the function $v^\e$ does not satisfy the above constraint
in general: we merely have
\begin{eqnarray*}
\left|\int_{|x|\leq A^\e} (v^\e-U)\right|&\leq& \left|\int_{|x|\leq A^\e} (v^\e - u) \right| +  \left|\int_{|x|\leq A^\e} (u- U) \right|\\
&\leq & \int_{|x|\leq A^\e}\left| v^\e - u \right| +\int_{|x|\geq A^\e} \left|u- U \right|\\
&\leq &2\e.
\end{eqnarray*}
Assume for instance that $\int_{|x|\leq A^\e} (v^\e-U)>0.$ We then define a non-negative function $\rho^\e\in L^\infty([-A^\e,A^\e])$ such that
\be
\begin{aligned}
v^\e(x) - \rho^\e(x) \geq  v(x, p_r) + \frac{\alpha^\e}{2}\quad \text{a.e. on }[-A^\e, A^\e]\\
\text{and}\quad \int_{|x|\leq A^\e} (v^\e-\rho^\e-U)=0.
\end{aligned}
\label{cond_rhoe}\ee
Such a function $\rho^\e$ exists provided
$$
\int_{|x|\leq A^\e}(v^\e-U)\leq \int_{|x|\leq A^\e} \left(v^\e - v(p_r) - \frac{\alpha^\e}{2}\right),
$$
and the above inequality is equivalent to
$$ \int_{|x|\leq A^\e} (U-v(p_r))\geq \frac{\alpha^\e A^\e}{2}.$$
The previous condition is satisfied by definition of $\alpha^\e$. Thus there exists a function $\rho^\e$ which satisfies conditions \eqref{cond_rhoe}.

We then set
$$
u^\e(x)= v^\e(x) - \rho^\e(x)\quad \text{for }x\in[-A^\e,A^\e].
$$
The construction is similar when $\int_{|x|\leq A^\e} (v^\e-U)<0.$

At this stage, we have defined a function $u^\e\in U + L^1_0$ which satisfies
$$
\begin{aligned}
 v(x,p_r) + \frac{\alpha^\e}{2}< u^\e(x) \leq v(x,p_l) - \frac{\alpha^\e}{2} \quad \forall x\in [-A^\e,A^\e],\\
u^\e(x)=U(x)\quad \forall x\in \R\setminus [-A^\e,A^\e],\\
\text{and}\quad\int_{\R} |u-u^\e|\leq 4\e.
\end{aligned}
$$
Now, by definition of the shock profile $U$, there exists a positive constant $R^\e$ such that
$$
x\geq R^\e\Rightarrow \left|U(x)- v(x,p_r)\right|\leq \frac{\alpha^\e}{2}.
$$
Let $k^+$ be a positive integer such that $k^+> R^\e + A^\e.$ Then for all $x\in [-A^\e,A^\e]$, we have
$$
v(x,p_r)\leq \tau_{k^+} U(x)\leq v(x,p_r) + \frac{\alpha^\e}{2} \leq u^\e(x).
$$
Similarly, there exists a negative integer $k^-$ such that for all $x\in [-A^\e,A^\e]$,
$$
u^\e(x) \leq v(x,p_l) - \frac{\alpha^\e}{2} \leq \tau_{k^-} U(x).
$$
Notice that $\tau_{k^\pm} U$ are also shock profiles. We now set
$$
U^\e_+ := \sup(\tau_{k^+} U ,U),\quad U^\e_- := \inf(\tau_{k^-} U , U).
$$
Since shock profiles are ordered, the functions $U^\e_\pm$ are viscous shocks, and 
$$
U^\e_-\leq u^\e\leq U^\e_+\quad\text{a.e.}
$$
Hence the lemma is proved.
\end{proof}

\subsection{An application: the convex case}

\label{ssec:convex}

This paragraph is devoted to the analysis of specific examples for which the existence of shock profiles and their stability can be proved.

\begin{lem}
Assume that for all $y\in \T$, $A(y,\cdot)$ is a convex function. Then the homogenized flux $\bar A$ is convex.

Furthermore, if $A(y,\cdot)$ is strictly convex for all $y$, then $\bar A$ is also strictly convex, and thus satisfies the Oleinik condition of Corollary \ref{cor:oleinik}.

\label{lem:convex}
\end{lem}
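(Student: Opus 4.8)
The plan is to prove the convexity of $\bar A$ directly from the variational characterization of $\bar A(p)$ in terms of the cell problem \eqref{cell}, combined with the linearity of the averaging functional. The key observation is that $\bar A(p)$ can be recovered by integrating the cell equation: as was shown in the proof of Lemma \ref{lem:CN}, for every $p\in\R$ the profile $v(\cdot,p)$ satisfies $-v'(x,p) + A(x,v(x,p)) = \bar A(p)$, so by integrating over one period (using periodicity to kill the $v'$ term) we get $\bar A(p) = \int_{\T} A(y,v(y,p))\,dy$. This is the formula I would work with.

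First I would fix $p_0,p_1\in\R$ and $\theta\in[0,1]$, set $p_\theta := \theta p_1 + (1-\theta)p_0$, and consider the candidate competitor $w_\theta := \theta v(\cdot,p_1) + (1-\theta) v(\cdot,p_0)$. By linearity of the average, $\mean{w_\theta} = p_\theta$, so $w_\theta$ is an admissible profile for the constraint in \eqref{cell} at level $p_\theta$, though of course it need not solve the cell equation. The strategy is then to show that $\bar A(p_\theta) \le \theta \bar A(p_1) + (1-\theta)\bar A(p_0)$ by comparing $\bar A(p_\theta) = \int_\T A(y, v(y,p_\theta))\,dy$ against $\int_\T A(y, w_\theta(y))\,dy$. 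The latter integral is, by pointwise convexity of $A(y,\cdot)$ and Jensen's inequality applied in the $v$-variable at each fixed $y$, bounded above by $\theta \int_\T A(y,v(y,p_1))\,dy + (1-\theta)\int_\T A(y,v(y,p_0))\,dy = \theta\bar A(p_1) + (1-\theta)\bar A(p_0)$. So it remains to bridge the gap between $\int_\T A(y,v(y,p_\theta))\,dy$ and $\int_\T A(y,w_\theta(y))\,dy$.

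The main obstacle is precisely this last bridge: $v(\cdot,p_\theta)$ and $w_\theta$ are genuinely different functions, so one cannot simply compare the two integrals pointwise. The right way around it, I expect, is to use that $v(\cdot,p_\theta)$ is the \emph{solution} of the cell problem and exploit a minimization or monotonicity structure. One clean route: write $z := w_\theta - v(\cdot,p_\theta) \in H^1_{\text{per}}(\T)$ with $\mean{z}=0$; then from the equation for $v(\cdot,p_\theta)$, namely $-v'(x,p_\theta) + A(x,v(x,p_\theta)) = \bar A(p_\theta)$, and the fact that $z'$ integrates to zero, one gets $\int_\T \big(-w_\theta'(x) + A(x,v(x,p_\theta))\big)\,dx = \bar A(p_\theta)$, hence $\int_\T A(x,v(x,p_\theta))\,dx = \bar A(p_\theta) + \int_\T w_\theta'(x)\,dx = \bar A(p_\theta)$ since $w_\theta$ is periodic — which just re-derives the formula and is not yet enough. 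The honest fix is to introduce the correct variational principle: $\bar A(p)$ is characterized as $\bar A(p) = \inf\{ \int_\T A(y,w(y))\,dy : w\in H^1_{\text{per}}(\T),\ \mean{w}=p,\ -w'+A(\cdot,w) \text{ is constant}\}$, or more robustly one uses the known representation of $\bar A$ from \cite{homogpara} as a value of a convex variational problem; granting that representation, convexity of $\bar A$ follows because the objective is convex in $w$ (pointwise convexity of $A(y,\cdot)$ plus linearity of the constraint $\mean{w}=p$ in $w$), and the infimum of a jointly convex function over a convex constraint set depending affinely on the parameter is convex in the parameter.

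For the strict convexity statement, I would argue by contradiction: if $\bar A$ is affine on some nondegenerate segment $[p_0,p_1]$, then for $p_\theta$ in the interior the inequality chain above must be an equality, which forces Jensen's inequality to be an equality for a.e.\ $y$; by strict convexity of $A(y,\cdot)$ this means $v(y,p_1) = v(y,p_0)$ for a.e.\ $y$, contradicting the strict monotonicity property (ii) in Proposition \ref{prop:cellpb} (which gives $v(y,p_1) > v(y,p_0)$ everywhere when $p_1 > p_0$). Hence $\bar A$ is strictly convex, and in particular $\bar A(p) \ne \alpha$ for $p$ strictly between two points at the same level, which is exactly Oleinik's condition \eqref{hyp:Oleinik} required in Corollary \ref{cor:oleinik}. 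The one genuinely delicate point to get right is the variational characterization of $\bar A$ — once that is in hand (citing \cite{homogpara}), the rest is Jensen plus the monotonicity already proved.
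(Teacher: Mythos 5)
Your reduction is fine up to a point: the identity $\bar A(p)=\int_{\T}A(y,v(y,p))\,dy$ is correct (it is exactly how $\bar A(p)$ is identified in the proof of Lemma \ref{lem:CN}), and the Jensen step gives $\int_{\T}A(y,w_\theta(y))\,dy\le\theta\bar A(p_1)+(1-\theta)\bar A(p_0)$. But the whole content of the lemma is then the bridge you flag yourself, namely $\int_{\T}A(y,v(y,p_\theta))\,dy\le\int_{\T}A(y,w_\theta(y))\,dy$, and your proposal does not prove it. The variational principle you write down to close the gap is vacuous: if $w\in H^1_{\text{per}}(\T)$ satisfies $\mean{w}=p$ and $-w'+A(\cdot,w)$ is constant, then $w$ is a periodic solution of \eqref{cell}, hence $w=v(\cdot,p)$ by the uniqueness in Proposition \ref{prop:cellpb}; so the admissible set is the singleton $\{v(\cdot,p)\}$, the infimum is trivially $\bar A(p)$, and the competitor $w_\theta$ is not admissible, so nothing is gained. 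The fallback ``known representation of $\bar A$ from \cite{homogpara} as a value of a convex variational problem'' is not established there or anywhere in this paper, and the naive statement it would require --- that $v(\cdot,p)$ minimizes $\int_{\T}A(y,\cdot)$ among periodic functions of mean $p$ --- is false in general: the Euler--Lagrange condition for that constrained problem is $\p_v A(y,w(y))=\text{const}$, which is not the cell equation. So the key inequality is left unproven, and the proposed route to it would fail.

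For comparison, the paper closes exactly this gap by a maximum-principle argument at the level of the cell ODE rather than by averaging. With $w=\lambda v_1+(1-\lambda)v_2$ and $u=v(\cdot,p)$, convexity of $A(y,\cdot)$ gives the pointwise supersolution inequality $-w'(y)+A(y,w(y))\le\lambda\bar A(p_1)+(1-\lambda)\bar A(p_2)$ for every $y$, while $-u'+A(y,u)=\bar A(p)$. Assuming $\bar A(p)>\lambda\bar A(p_1)+(1-\lambda)\bar A(p_2)$, one writes $u=p+f'$, $w=p+g'$ with $f,g$ periodic and $f<g$, observes that $\sup_y\bigl(-g''+A(y,p+g')\bigr)<\inf_y\bigl(-f''+A(y,p+f')\bigr)$, adds a zeroth-order term $\alpha\,(\cdot)$ and applies the maximum principle to conclude $g\le f$, a contradiction; strict convexity follows because strict convexity of $A(y,\cdot)$ together with the strict monotonicity $v(y,p_1)>v(y,p_2)$ from Proposition \ref{prop:cellpb} makes the supersolution inequality strict. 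Your strict-convexity argument (equality in Jensen forces $v(\cdot,p_1)=v(\cdot,p_0)$) is a reasonable endgame, but it sits on top of the unproven bridge, so as written the proof has a genuine gap at its central step.
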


The convexity properties  are proved in \cite{LPV}. However, for the reader's convenience, we have reproduced the proof in  Appendix B. Oleinik's condition is an immediate consequence of the strict convexity of the flux $\bar A$.
\begin{ex}
Assume that the flux $A$ is strictly convex in its second variable, and that the assumptions of Proposition \ref{prop:ex_shock} are satisfied. Then, with the same notation as in Proposition \ref{prop:ex_shock}, we have
$$
q_l=p^+\quad\text{and}\quad q_r=p^-.
$$
Indeed, according to Corollary \ref{cor:oleinik}, we have $\{ q_l,q_r\}=\{ p^+,p^-\}$. Moreover, since the flux $A$ is strictly convex, $\p_v A(y,\cdot)$ is strictly increasing, and
$$
\mean{\p_v A(\cdot, v(\cdot,p^-))}<\mean{\p_v A(\cdot, v(\cdot,p^+))}.
$$
Proposition \ref{prop:sens} then allows us to conclude that $p^-=q_r$, $p^+=q_l$.

\end{ex}

We now prove Corollary \ref{cor:convex} (pending Proposition \ref{prop:linear}). Assume that the flux $A$ is given by
$$
A(y,p)=V(y)+ f(p),
$$
with $V$ and $f$ satisfying the assumptions of Corollary \ref{cor:convex}. The existence of solutions of equation \eqref{cell} follows immediately from Proposition \ref{prop:cellpb}; moreover, since the flux $A$ is linear at infinity, hypothesis \eqref{hyp:unif_lip} is satisfied. As a consequence, for $p>0$ sufficiently large, we have
$$
A(y,v(y,p))= V(y) + a_+ v(y,p)\quad\forall y\in \T^N,
$$
and thus $\bar A(p)=\mean{V} + a_+ p.$ Similarly, $\bar A(p)=\mean{V} - a_- p$ for $p<0$ with $|p|$ sufficiently large. These formulas entail that if $\alpha>0$ is large enough, then, setting $p_{\pm}= \pm(\alpha - \mean{V})/a_\pm$, we have $\bar A(p^-)=\bar A(p^+)=\alpha.$ Since $\bar A$ satisfies Oleinik's condition, we deduce that there exists a shock profile connecting $v(p^-)$ and $v(p^+)$. 

Additionally, if $|p|$ is large enough, then $f$ is linear, say, on the intervals $[\inf v(|p|)-1, \infty)$ and $(-\infty, \sup v(-|p|) + 1].$ Thus, for all $\xi\in[-1,1]$, we have
$$
A(y,v(y,p) + \xi)= V(y) + f(v(y,p) + \xi)= A(y,v(y,p))+ \sgn(p) a_{\sgn(p)}\xi.
$$
Hence the flux $A$ satisfies the assumption of Proposition \ref{prop:linear} for all $p$ large enough. We infer that the solutions $v(\cdot, p^\pm)$ are stable by the semi-group $S_t$ under small perturbations in $L^1_0$ which satisfy \eqref{moments}. Point \textbf{(ii)} in Corollary \ref{cor:convex} then follows from Proposition \ref{prop:stab_shock2} and the remark following it.

\section{Stability of shock profiles in one space dimension - Part I}

\label{sec:shock_stab:1}

This section is devoted to the proof of Theorem \ref{thm:stab_shock1}. Hence, throughout this section, we consider an  initial data $u_0$ which satisfies \eqref{hyp:u_0}, and such that $u_0\in U + L^1$, where $U$ is a stationary shock of equation \eqref{eq:evol_per}. Using Lemma \ref{lem:L10}, we deduce that there exists another shock profile $V$ such that $u\in V + L^1_0(\R)$. Then, using Lemma \ref{lem:up/lowshocks} together with the Contraction principle, we can restrict the analysis to the class of initial data $u_0$ such that
\be
\exists (U_-,U_+)\text{ shock profiles,}\quad U_-\leq u_0\leq U_+.
\label{in:u_0}\ee
Indeed, assume that Theorem \ref{thm:stab_shock1} holds for all $v_0\in V + L^1_0$ such that \eqref{in:u_0} is satisfied. Consider now a function $u_0\in V+ L^1_0$ satisfying \eqref{hyp:u_0}, and let $\e>0$ be arbitrary. According to Lemma \ref{lem:up/lowshocks}, there exists $u^\e_0\in V+L^1_0$ satisfying \eqref{in:u_0} and such that $\| u_0-u_0^\e\|_1\leq \e$. The $L^1$ contraction principle entails that for all $t\geq 0,$
$$
\| S_t u_0 - V\|_1\leq \|S_t u_0- S_t u_0^\e\|_1 + \| S_t u_0^\e - V\|_1\leq \e + \| S_t u_0^\e - V\|_1.
$$
Notice also that by the Contraction principle, the function $t\mapsto \| S_t u_0 - V\|_1$ is non-increasing, and thus has a finite limit as $t\to\infty$. We infer that
$$
\forall \e>0,\ \lim_{t\to\infty}\| S_t u_0 - V\|_1\leq\e,
$$
and thus $S_t u_0$ converges toward $V$ as $t\to\infty.$ 

There  remains to prove Theorem \ref{thm:stab_shock1} for initial data which satisfy \eqref{in:u_0}.
As emphasized in Section \ref{sec:results}, inequalities \eqref{hyp:u_0} or \eqref{in:u_0} should be seen as the analogues of \eqref{hyp:initiallayer} in the context of shock stability. 
The proof of Theorem \ref{thm:stab_shock1} in this case relies on arguments from dynamical systems theory, which are due to S. Osher and J. Ralston (see \cite{OR}; similar ideas are developed by D. Amadori and D. Serre in \cite{AmadoriSerre}). The aim is to prove that the $\omega$-limit set of the trajectory $S_t u_0$ is reduced to $\{ V\}$, by using a suitable Lyapunov function. Hence, we first prove that the  $\omega$-limit set, denoted by $\Omega$, is non-empty, then we state some properties of the $\omega$-limit set, and eventually we prove that $\Omega=\{ V\}.$

\vskip2mm

\noindent{\it First step. Compactness in $L^1$ of the trajectories.}

Throughout this section, we set $w(t):= S_t u_0 - V$. Notice first that by the comparison principle for equation \eqref{eq:evol_per}, inequality \eqref{in:u_0} is preserved by the semi-group $S_t$: for all $t\geq 0,$ we have
$$
U_-\leq S_t u_0\leq U_+.
$$
Hence, for all $t\geq 0,$
$$
U_--U\leq w(t)\leq U_+- U.
$$
Since $U_+-U$ and $U_- - U$ are integrable functions, the family $\{w(t)\}_{t\geq 0}$ is equi-integrable in $L^1$. Moreover, since $U_+- U $ and $U-U_- $ are bounded, it follows that $w$ is uniformly bounded in $L^\infty$. The function $w$ satisfies a linear parabolic equation of the type
$$
\p_t w + \p_y(b(t,y) w ) - \p_{yy} w=0,\quad t>0, \ y\in\R,
$$
with $b\in L^\infty([0,\infty)\times \R)$.
Theorem 10.1 in Chapter III of \cite{ladypara} then implies that there exists $\alpha>0$ such that for all $t_0\geq 1$, for all $R>0$, 
$$
\| u(t)\|_{H^{\alpha/2,\alpha}((t_0, t_0+1)\times (-R,R))}<\infty.
$$
Thus the family  $\{w(t)\}_{t\geq 0}$ is also equi-continuous in $L^1$.

Whence it follows from  the Riesz-Fr\'echet-Kolmogorov Theorem  that the family $\{w(t)\}_{t\geq 0}$ is relatively compact in $L^1(\R)$.
Thus the $\omega$-limit set
$$
 \Omega:=\left\{ W\in V + L^1(\R),\exists (t_n)_{n\in \N}, t_n\underset{n\to\infty}{\longrightarrow} \infty,\ S_{t_n}u_0\to W \text{ in }L^1(\R)  \right\}
$$
is non-empty.

\vskip1mm

\noindent{\it Second step. Properties of the $\omega$-limit set $\Omega$.}

First, $\Omega$ is forward and backward invariant by the semi-group $S_t,$ meaning that for all $t\geq 0,$
$$
S_t \Omega=\Omega.
$$
This important property is a generic one for $\omega$-limit sets. It follows immediately, thanks to parabolic regularity, that all functions in $\Omega$ are smooth: $\Omega\subset H^1_\text{loc}(\R),$ for instance. As a consequence, if $W\in \Omega$ and $w_1(t):= S_t W$, Theorem 6.1 in Chapter III of \cite{ladypara} entails that $w_1\in L^2([0,T], H^2(B_R))\cap H^1([0,T], L^2(B_R))$ for all $T,R>0.$

The second property which is important for our analysis is the LaSalle invariance principle (see \cite{lasalle}), which requires the existence of a Lyapunov function. In the case of scalar conservation laws, a classical choice for a Lyapunov function is $F[u]=\|u- V\|_1$. The Contraction principle entails that $t\mapsto F[S_t u_0]$ is non-increasing.
Thus $F$ takes a  constant value on $\Omega,$ which we denote by $C_0$.

Eventually, using the conservation of mass, we deduce that $\Omega$ is a subset of $V+ L^1_0$.

\vskip1mm

\noindent{\it Third step. Conclusion.}

 We now prove, using the parabolic structure of equation \eqref{eq:w}, that $\Omega=\{V\}$.

Let $W_0\in \Omega$ be arbitrary, and let $W(t)=S_t (W_0)$. Notice that $W(t)\in \Omega$ for all $t\geq 0$, according to the previous step. Moreover, $W-V$ satisfies
$$
\p_t (W-V) + \p_y \left( A(y, W) - A(y, V) \right) - \p_{yy} (W-V)=0.
$$

Multiplying the above equation by $\sgn(W-V)$, we obtain
$$
\p_t |W-V| + \p_y \left[\sgn(W-V) \left(A(y,W(t)) - A(y,V)\right)\right] - \sgn(W-V) \p_{yy} (W-V)=0.
$$
Let $\phi$ be a cut-off function, i.e. $\phi\in\mathcal C^\infty_0(\R)$, $\phi\geq 0$ and $\phi\equiv 1$ in a neighbourhood of zero. For $R>0$, we set $\phi_R:= \phi(\cdot/R).$
We now multiply the above equality by $\phi_R$ and integrate on $[t,t']\times\R$. Recalling that $\int_\R |W(t)-V|=C_0$ for all $t$, we infer that for all $t'>t\geq 0$, there exists a function $\e_{t,t'}:[0,\infty)\to[0,\infty)$ such that $\lim_{R\to\infty} \e_{t,t'}(R)=0$ and
$$
\left|\int_t^{t'}\int_{\R}\sgn(W(s)-V) \p_{yy}(W(s,y)-V(y))\phi_R(y)\:ds\:dy\right|\leq \e_{t,t'} (R).
$$
Thus, using a slightly modified version of Lemma \ref{lem:tech} in the Appendix, we infer that
$$
\sgn(W(s) - V) \p_{yy}(W(s)-V)=\p_{yy}|(W(s)-V)|
$$
almost everywhere and in the sense of distributions. Consequently, the function $|W-V|$ is a non-negative solution of  a parabolic equation of the type
$$
\p_t |W-v| + \p_y (b(t,y)|W-V|) -\p_{yy}|W-V|=0,
$$
with $b\in L^\infty([0,\infty\R)$. We now conclude thanks to Harnack's inequality  (see \cite{evans}): let $x_0\in\R$ be arbitrary, and let $K$ be any compact set in $\R$ such that $x_0\in K$. Then there exists a constant $C_K$ such that
$$
|(W_0- V)(x_0)|\leq \sup_{x\in K}|(W_0- V)(x)| \leq C_K \inf_{x\in K}|(W_{|s=1}-V) (x)|.
$$
Now, $(W_{|s=1}-V)\in L^1_0\cap H^1_{\text{loc}}(\R)$, and thus there exists  $x_1\in\R$ such that
$$
W(1,x_1) - V(x_1)=0.
$$
Choose $K$ such that $x_1\in K$. Then
 $W_0-V$ vanishes uniformly on $K$, and in particular, $(W_0-V)(x_0)=0$. Since $x_0$ was chosen arbitrarily, we deduce that $W_0=V$. Hence $\Omega=\{V\},$ and Theorem \ref{thm:stab_shock1} is proved.

\section{Uniform in time {\it a priori} bounds for viscous scalar conservation laws}

\label{sec:bounds}

This section is devoted to the proof of Proposition \ref{prop:bounds}. As far as possible, we will treat both models simultaneously. We set
$$
w(t) := S_t u_0 - U_0, \quad t\geq 0.
$$
The function $w$ satisfies the following equation
\be\label{eq:w}
\p_t w(t,y) + \dv_y B(y,w(t,y)) - \Delta_y w(t,y)=0,\quad t>0, \ y\in Q,
\ee
where
$$
B(y,w)=A(y,U_0(y)+ w ) - A(y,U_0(y)),\quad y\in Q,\ w\in\R.
$$

Due to the Contraction principle recalled in Section \ref{sec:results}, it is known that $w$ is bounded in $L^\infty([0,\infty), L^1(Q))$, and 
\be
\forall t\in \R_+,\quad \| w(t)\|_{L^1}\leq \| u_0 - U_0\|_{L^1}.\label{L1_bound}
\ee
The idea of this section is to use this uniform $L^1$ bound in order to derive uniform $L^p$ bounds on $w$ for all $p\in[1,\infty]$. To that end, we proceed by induction on the exponent $p$. The first step is dedicated to the derivation of a differential inequality relating the derivative of the $L^p$ norm to a viscous dissipation term. The calculations are very similar to those developed in \cite{homogpara} to derive \textit{a priori} bounds for solutions of equation \eqref{cell}. Then, we use Poincar\'e inequalities to control the $L^p$ norm by the dissipation. Eventually, we conclude thanks to a Gronwall type argument.

\vskip2mm

\noindent\textit{Preliminary for the whole space case.}

We begin by recalling some regularity results about the solutions of equation \eqref{eq:evol_per} in the case $Q=\R$. According to the papers by Kru\v{z}kov \cite{kruzkhov1,kruzkhov2}, it is kown that $w\in L^\infty_\text{loc}([0,\infty), L^\infty (Q))$. As a consequence, $w\in L^\infty_\text{loc}([0,\infty), L^p (Q))$ for all $p$. 

Then, multiplying \eqref{eq:w} by $w\chi$ where $\chi\in \mathcal C^\infty_0(\R)$ is an arbitrary non-negative cut-off function, and integrating in space and time, it is easily proved that for all $T>0$,  $w$ satisfies an inequality of the type
$$
\int_0^T \int_{\R} |\p_y w(s,y)|^2 \chi(y)\:dy\:ds\leq C_T,
$$
where the constant $C_T$ depends on $T$, $\|w \|_{L^\infty([0,T]\times \R)}$ and $\| w_{t=0}\|_1$, but not on $\chi$. We deduce that $\p_y w \in L^2_{\text{loc}}([0,\infty), L^2(\R))$.

\vskip2mm

\noindent {\it First step. A differential inequality.}

In this step, we treat the periodic and the full space models simultaneously; our goal is to prove an inequality of the type
$$
\frac{d}{dt}\int |w|^{q+1} + c_q\int \left|\nabla |w|^{\frac{q+1}{2}}\right|^2  \leq C_q \left(\int |w|^{q+n} + \int |w|^{q+1}\right),
$$
where $q\geq 1$ is arbitrary, $n$ is the exponent appearing in \eqref{hyp:A1}, and the constants $c_q$ and $C_q$ depend on $q$, $n$,  $N$, and $\| U_0\|_{W^{1,\infty}}$.

To that end, we take $q\geq 1$,  multiply \eqref{eq:w} by $w|w|^{q-1}$ and integrate on $Q$; we obtain
\begin{multline*}
 \frac{1}{q+1}\frac{d}{dt}\int_Q |w|^{q+1}+ q \int_Q |\nabla w|^2 |w|^{q-1}=\\
=q\int_Q \nabla_y w(t,y)\cdot B(y,w(t,y))|w(t,y)|^{q-1}\:dy.
\end{multline*}

Notice that all terms are well-defined thanks to the preliminary step.

For $(y,w)\in Q\times \R$, set
$$
b_q(y,w)=q\int_0^w B(y,w')|w'|^{q-1}dw';
$$
then
\begin{eqnarray*}
 &&-q\int_{\T} \nabla_y w(t,y)\cdot B(y,w(t,y))|w(t,y)|^{q-1}\:dy\\ &=& \int_{\T} \left[- \dv_y\left(b_q(y,w(t,y))  \right)+(\dv_y b_q)(y,w(t,y))\right] \:dy\\
&=&q\int_{\T} \int_0^{w(t,y)}(\dv_y B)(y,w')|w'|^{q-1}\:dw'.
\end{eqnarray*}
Thus, we now compute, for $(y,w')\in Q\times \R,$ 
\begin{eqnarray*}
\dv_y  B(y,w')&=&\dv_y \left[ A(y,U_0(y)+ w') -A(y,U_0(y))  \right]\\
&=&(\dv_y A)(y,U_0(y) + w') - (\dv_y A)(y,U_0(y) )\\
&+& \nabla_y U_0\cdot \left[(\p_v A)(y,U_0(y) + w')-(\p_v A)(y,U_0(y) )\right].
\end{eqnarray*}
Consequently, according to hypothesis \eqref{hyp:A1}, we deduce that there exists a positive constant $C$ depending only on $\| U_0\|_{W^{1,\infty}}$ and $q$ such that
$$
\left|q\int \nabla_y w(t,y)\cdot B(y,w(t,y))|w(t,y)|^{q-1}\:dy\right|\leq C \left(\int |w(t)|^{q+1} + \int |w(t)|^{q+n}\right).
$$
Eventually, we infer that for all $q\geq 1$, there exist positive constants $c_q$, $C_q$ such that for all $t>0$,
\be\label{in:energy-per}
\frac{d}{dt}\int |w(t)|^{q+1} + c_q \int \left| \nabla |w(t)|^{\frac{q+1}{2}} \right|^2\leq C_q\left(\int |w(t)|^{q+1} + \int |w(t)|^{q+n}\right).
\ee

\vskip2mm
\noindent{ \it Second step. Control of $L^p$ norms by the dissipation term (Poincar\'e inequalities).}

In this step, we treat the periodic case and the whole space case separately, and we begin with the periodic case.

First, remember that for all $p \in(1,\infty)$ such that $\frac{1}{p}\geq \frac{1}{2}- \frac{1}{N}$, there exists a positive constant $C_p$ such that for all $\phi\in \HP$,
\be
\left\| \phi - \mean{\phi}\right\|_p\leq C_p \left\| \nabla \phi \right\|_2.
\ee
Taking $\phi= |w|^{\frac{q+1}{2}}$, we deduce that
$$
\| w\|_r\leq C_r\left(\left\| \nabla |w|^{\frac{q+1}{2}} \right\|^{\frac{2}{q+1}}_2 + \| w\|_{\frac{q+1}{2}}\right),
$$
where $r\in(1,\infty)$ is such that 
\be\label{cond:r1}
\frac{1}{r}\geq \frac{1}{q+1}- \frac{2}{N(q+1)}.
\ee

Now, the idea is to interpolate the $L^{n+q}$ and the $L^{q+1}$ norms in the right-hand side of inequality \eqref{in:energy-per} between $L^1$ and $L^r$, where $r$ satisfies the  constraint above. It can be easily checked that when $n<N+2)/N$,  we have
$$
\frac{1}{n+q}> \frac{1}{q+1}- \frac{2}{N(q+1)};
$$
hence the interpolation is always possible, and we have
$$
\begin{aligned}
\|w\|_{q+1}\leq \|w\|_1^{1-\alpha} \|w\|_r^\alpha,\\
 \|w\|_{q+n}\leq \|w\|_1^{1-\beta} \|w\|_r^\beta,
\end{aligned}
$$
where
$$
\frac{1}{q+1}= 1-\alpha + \frac{\alpha}{r},\quad \frac{1}{q+n}= 1-\beta + \frac{\beta}{r}.
$$
Gathering all inequalities, we infer that
\begin{eqnarray*}
&& \frac{d}{dt}\|w\|_{q+1}^{q+1} + C_q \left\| \nabla |w|^{\frac{q+1}{2}} \right\|_2^2\\
&\leq & C \|w\|_1^{(q+1)(1-\alpha)}\left\| \nabla |w|^{\frac{q+1}{2}} \right\|^{2\alpha}_2+C  \|w\|_1^{(q+1)(1-\alpha)}\| w\|_{\frac{q+1}{2}}^{\alpha(q+1)}\\
&+& C \|w\|_1^{(q+n)(1-\beta)}\left\| \nabla |w|^{\frac{q+1}{2}} \right\|^{\frac{2\beta(q+n)}{q+1}}_2+ C \|w\|_1^{(q+n)(1-\beta)}\| w\|_{\frac{q+1}{2}}^{\beta(q+n)}.
\end{eqnarray*}
Remember that the $L^1$ norm is bounded. For the time being, we leave aside the $L^{\frac{q+1}{2}}$ norms of the right-hand side: those will be treated in the very last step. In order to control the right-hand side by the dissipation term in the left-hand side, it suffices to find $r$ (and thus $\alpha$ and $\beta$) such that
\be\label{cond:r2}
2\alpha<2,\quad\frac{2\beta(q+n)}{q+1}<2.
\ee
Remembering the definition of $\beta$, we deduce that we have to find $r\in (q+1,\infty)$ satisfying the two inequalities
$$
\begin{aligned}
1- \frac{1}{r}>\frac{q+ n-1}{q+1} ,\\
\frac{1}{r}\geq \frac{1}{q+1}- \frac{2}{N(q+1)}.
\end{aligned}
$$
This is possible if and only if the couple $(n,q)$ satisfies
$$
\left\{ \begin{array}{l}
\ds \frac{q+ n-1}{q+1}<1,\\~\\
\ds\frac{1}{q+1}- \frac{2}{N(q+1)}< 1-\frac{q+ n-1}{q+1}
\end{array}
 \right.
$$
which amounts to the condition $n<\min(2,(N+2)/N)$. In the case when $N=1$, this yields $n<2$, which is more restrictive than the assumption of Proposition \ref{prop:bounds} ($n<3$). However, when $N=1$, the same arguments as in the whole space case can be used (see below), and lead to $n<3$. Thus, under the hypotheses of Theorem \ref{thm:lgtime}, for all $q\geq 1$, we may find $r>\max(q+1,q+n)$ such that conditions \eqref{cond:r1}, \eqref{cond:r2} are fulfilled. The Cauchy-Schwarz inequality then implies that
\be\label{in:pre-gronwall}
\frac{d}{dt}\int |w(t)|^{q+1} + C_1 \int \left| \nabla |w(t)|^{\frac{q+1}{2}} \right|^2\leq C_2\left(  \| w(t)\|_{\frac{q+1}{2}}^{p_1} + \| w(t)\|_{\frac{q+1}{2}}^{p_2}\right),
\ee
where the constant  $C_2$ depends on $\| u_0-U_0\|_1,$ and the exponents $p_1,p_2$ on $n$, $q$ and $N$.
According to the Poincar\'e-Wirtinger inequality, we have
\begin{eqnarray*}
  \left\| \nabla |w|^{\frac{q+1}{2}} \right\|_2^2 &\geq & c \left\| |w|^{\frac{q+1}{2}}- \mean{|w|^{\frac{q+1}{2}}}\right\|_2^2\\
&=&c\left(\left\| |w|^{\frac{q+1}{2}} \right\|_2^2 - \mean{|w|^{\frac{q+1}{2}}}^2\right)\\
&=&c \left( \int |w|^{q+1} - \| w\|_{\frac{q+1}{2}}^{q+1} \right).
\end{eqnarray*}
Eventually, we deduce that for all $q\geq 1$, there exists constants $C_1, C_2,p_1,p_2$ such that
\be\label{in:gronwall}
\frac{d}{dt}\int |w(t)|^{q+1} + C_1 \int |w(t)|^{q+1}\leq C_2\left(  \| w(t)\|_{\frac{q+1}{2}}^{p_1} + \| w(t)\|_{\frac{q+1}{2}}^{p_2}\right).
\ee

\vskip2mm

Let us now treat the one-dimensional model set in the whole space. In dimension one, the $H^1$ and $L^1$ norms control the $L^\infty$ norm. Hence we now interpolate the two integrals in the 
right-hand side of \eqref{in:energy-per} between $L^{\frac{q+1}{2}}$ and $L^\infty$:
$$
\begin{aligned}
\int |w|^{q+1}\leq \| w\|_{\frac{q+1}{2}}^\frac{q+1}{2} \| w\|_{\infty}^\frac{q+1}{2},\\
 \int |w|^{q+n}\leq \| w\|_{\frac{q+1}{2}}^\frac{q+1}{2} \| w\|_{\infty}^\frac{q+2n-1}{2}.
\end{aligned}
$$
We use the following Poincar\'e inequality, which involves the dissipation term in the right-hand side of \eqref{in:energy-per} (the proof of this inequality is classical and left to the reader: we refer to \cite{GT} for the proof of similar inequalities): there exists a constant $C_q$, depending only on $q$, such that for all $w\in L^1\cap L^\infty\cap H^1(\R)$\footnote{If $w\in L^\infty\cap H^1(\T)$, the corresponding inequality is
$$
\|w\|_{L^\infty(\T)}\leq C_q\|w \|_\frac{q+1}{2}^{1/3}\left( \left( \int \left| \p_y |w|^\frac{q+1}{2} \right|^2\right)^\frac{2}{3(q+1)} +\|w \|_\frac{q+1}{2}^{2/3} \right).
$$
}
$$
\| w\|_{\infty}\leq C_q \|w \|_\frac{q+1}{2}^{1/3}\left( \int \left| \p_y |w|^\frac{q+1}{2} \right|^2\right)^\frac{2}{3(q+1)}.
$$
Consequently, there exist positive constants $C,p$ such that for all $w\in L^1\cap L^\infty\cap H^1(\R)$,
$$
\int |w|^{q+n}\leq C \| w\|_\frac{q+1}{2}^p\left( \int \left| \p_y |w|^\frac{q+1}{2} \right|^2\right)^\frac{q+2n-1}{3(q+1)}.
$$
Hence, in order that the dissipation term controls the right-hand side of \eqref{in:energy-per}, the exponent $n$ should satisfy
$$
\frac{q+2n-1}{3(q+1)}<1\quad\forall q\geq 1,
$$
which leads to the condition $n<3$. Using Young's inequality, we conclude that \eqref{in:pre-gronwall} is satisfied. Moreover, the Poincar\'e inequality used above entails that for all $\lambda>0$,
\begin{eqnarray*}
 \int |w|^{q+1}&\leq& C \|w\|_\frac{q+1}{2}^\frac{2(q+1)}{3} \left( \int \left| \p_y |w|^\frac{q+1}{2} \right|^2\right)^{1/3}\\
&\leq & \frac{C}{\lambda^2}  \|w\|_\frac{q+1}{2}^{q+1} + \lambda \int \left| \p_y |w|^\frac{q+1}{2} \right|^2.
\end{eqnarray*}
Eventually, we deduce that inequality \eqref{in:gronwall} is also satisfied in the whole space case.

\vskip2mm

\noindent{\it  Third step. Uniform bounds in $L^q$ for all $q<\infty.$}

We now conclude thanks to Gronwall's lemma, using an inductive argument. Notice indeed that inequality \eqref{in:gronwall} implies that for all $q\geq 1$,
\be\label{rec}
w\in L^\infty([0,\infty), L^{q}(Q)) \Rightarrow w\in L^\infty([0,\infty), L^{2q}(Q)).
\ee
Indeed, assume that $w\in L^\infty([0,\infty), L^q)$ for some $q\geq 1$. According to \eqref{in:gronwall}, we have
$$
\frac{d}{dt}\int |w(t)|^{2q} + C_1 \int |w(t)|^{2q}\leq C_2,
$$
where the constant $C_2$ depends on $\| U_0\|_{W^{1,\infty}}$ and on $\| w\|_{L^\infty([0,\infty), L^1)}$, so that, using Gronwall's lemma,
$$
\int |w(t)|^{2q} \leq e^{-C_1 t}\int |w_{|t=0}|^{2q} + \frac{C_2}{C_1}(1- e^{-C_1t})\leq C.
$$
Thus $w\in L^\infty([0,\infty), L^{2q})$ and \eqref{rec} is proved. Since $w\in L^\infty([0,\infty), L^{1})$, we deduce that $w\in L^\infty([0,\infty), L^{q})$ for all $q\in[1,\infty)$.

\vskip2mm

{\it \noindent Fourth step. Uniform bounds in $L^\infty$ and $W^{1,p}.$}

We now derive some $L^\infty$ bounds thanks to parabolic regularity results. First, notice that in equation \eqref{eq:w}, the flux $B$ can be written as
$$
B(y,w(t,y))=b(t,y) w,
$$
where
$$
b(t,y)= \int_0^1 a(y,v_0(y) + \tau w(t,y))\:d\tau.
$$
According to the previous steps, $b(t,y)\in L^\infty([0,\infty), L^q_\text{loc}(Q))$ for all $q>0$; in particular, in the whole space case, for all $q>1$ there exists a constant $C_q$ such that for all $y_0\in\R$,
$$
\sup_{t\geq 0} \| b(t)\|_{L^q(y_0-2, y_0+2)}\leq C_q.
$$
We now use Theorem 8.1 in Chapter III of \cite{ladypara}: we have, for all $y_0\in Q$, for all $t_0\geq 1$,
$$
|w(t_0,y_0)|\leq C \left(\| w\|_{L^2(Q_{t_0,y_0})}, \| b\|_{L^q(Q_{t_0,y_0})}\right),
$$
where $Q_{t_0,y_0}:=(t_0-1,t_0+1)\times (y_0-1,y_0+1)$ and $q$ is some parameter chosen sufficiently large. The right-hand side is bounded uniformly in $y_0$ and $t_0$ by a positive constant $C$, and we infer that for all $y_0\in Q, t_0\geq 1,$
$$
|w(t_0,y_0)|\leq C.
$$
Thus $w\in L^\infty([0,\infty)\times Q)$.
Using Theorem 10.1 in Chapter III of \cite{ladypara}, we also deduce that there exists $\alpha >0$ and a constant $C>0$ such that for all $t_0\geq 1,$ for all $x_0\in Q$, 
$$
\left\| w\right\|_{H^{\alpha/2,\alpha}((t_0,t_0+1)\times (x_0-1,x_0+1))} \leq C.
$$
As a consequence, we obtain
$$
\| w\|_{L^\infty([1,\infty), \mathcal C^\alpha(Q))}\leq C.
$$

\section{Long time behaviour of solutions for the periodic model}

\label{sec:per-stab}

Throughout this section, we assume that $Q=\T^N$, and we consider a solution $u(t)=S_t u_0$ of equation \eqref{eq:evol_per} ($t\geq 0$).  Our goal is to prove, under the assumptions of Theorem \ref{thm:lgtime}, that $u(t) - v(\mean{u_0})$ vanishes in $L^\infty$ as $t\to\infty$. The idea is to prove in a first step the convergence for initial data which are bounded from above or from below by a solution of equation \eqref{cell}, and then to extend this result to arbitrary initial data thanks to the $L^\infty$ bounds proved in the previous section (see Proposition \ref{prop:bounds}).
We thus begin with the following Proposition:
\begin{prop}
Let $u_0\in L^\infty(\T^N)$ such that 
\be
\exists p_0\in \R,\quad u_0(y)\leq v(y,p_0) \quad\text{for a.e. } y\in \T^N.
\ee

Let $u(t)=S_t u_0$ for $t\geq 0$. Then, as $t\to\infty,$
$$
u(t)\to v(\cdot, \mean{u_0})\quad \text{in } L^\infty(\T^N).
$$
\label{prop:conv_sup}
\end{prop}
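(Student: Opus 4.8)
The plan is to exploit the uniform-in-time $L^\infty$ bound of Proposition~\ref{prop:bounds} to trap the whole trajectory $u(t)=S_t u_0$ in a fixed bounded range of values, then to truncate the flux outside that range so as to recover the extra assumption \eqref{hyp:unif_lip} (hence property \textbf{(iii)} of Proposition~\ref{prop:cellpb}), and finally to fall back on the already-established Proposition~\ref{prop:initiallayer}.

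First I would record two a priori bounds on $u(t)$. Since $u_0\leq v(\cdot,p_0)$ and $v(\cdot,p_0)$ is a stationary solution of \eqref{eq:evol_per}, the comparison principle gives $u(t)\leq v(\cdot,p_0)$ for all $t\geq 0$. On the other hand, on $\T^N$ any $u_0\in L^\infty$ satisfies $u_0-v(\cdot,0)\in L^1(\T^N)$ with $v(\cdot,0)\in W^{1,\infty}$ stationary, so Proposition~\ref{prop:bounds} applies and yields $M:=\sup_{t\geq 0}\|u(t)\|_{L^\infty(\T^N)}<\infty$. Setting $L:=\max\big(M,\|v(\cdot,p_0)\|_{L^\infty(\T^N)}\big)$, we obtain that $u_0$ and all the $u(t)$ take values in $[-L,L]$.

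Next I would introduce a truncated flux $\tilde A\in W^{1,\infty}_{\mathrm{loc}}(\T^N\times\R)^N$ equal to $A$ on $\T^N\times[-L,L]$, with $\tilde A(y,p)=A(y,L)$ for $p\geq L$ and $\tilde A(y,p)=A(y,-L)$ for $p\leq -L$. Then $\p_p\tilde A_i$ and $\dv_y\tilde A$ are bounded on $\T^N\times\R$, so $\tilde A$ fulfills \eqref{polygrowthai}, \eqref{polygrowthaN} with $m=n=0$, the first alternative of \eqref{hyp:ex_cell}, and \eqref{hyp:unif_lip}. Writing $\tilde v(\cdot,p)$ for the solutions of \eqref{cell} associated with $\tilde A$ and $\tilde S_t$ for its semi-group, two facts are used: (a) because $\tilde A=A$ on $\T^N\times[-L,L]$ and $u(t)$ never leaves this slab, $u(t)$ is also the unique bounded entropy solution associated with $\tilde A$ and datum $u_0$, so $\tilde S_t u_0=u(t)$ for all $t$; (b) property \textbf{(iii)} of Proposition~\ref{prop:cellpb}, now available for $\tilde A$, provides $q_-<q_+$ with $\tilde v(\cdot,q_-)\leq u_0\leq \tilde v(\cdot,q_+)$. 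Applying Proposition~\ref{prop:initiallayer} to the flux $\tilde A$ then gives $u(t)=\tilde S_t u_0\to \tilde v(\cdot,\mean{u_0})$ in $L^\infty(\T^N)$. Since each $u(t)$ takes values in $[-L,L]$, so does the limit $\tilde v(\cdot,\mean{u_0})$, which therefore lives in the slab where $\tilde A=A$; hence it solves \eqref{cell} for the original flux with mean $\mean{u_0}$, and by the uniqueness part of Proposition~\ref{prop:cellpb} it coincides with $v(\cdot,\mean{u_0})$. This yields the claim.

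I expect the genuine content of the argument to lie in the uniform bound of the second step — this is precisely what Proposition~\ref{prop:bounds} was designed to provide, and the reason it was proved before the present statement. Once the trajectory is confined, the behaviour of $A$ far from the relevant range is immaterial, and the only care needed is (i) checking that truncation preserves the hypotheses of Proposition~\ref{prop:cellpb} — which is easy, since truncation only improves the growth of $\tilde A$ — and (ii) the identity $\tilde S_t u_0=u(t)$, which follows from $L^1$-uniqueness of entropy solutions together with the fact that $u(t)$ stays in the region where the two fluxes agree.
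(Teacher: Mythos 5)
Your proof is correct, but it follows a genuinely different route from the paper's. The paper adapts the dynamical argument of \cite{initiallayer} to the one-sided situation: comparison with the stationary solution $v(\cdot,p_0)$ gives the upper bound, Proposition \ref{prop:bounds} the lower one, and then the decreasing envelope $p^*(t):=\inf\{p:\ v(\cdot,p)\geq \sup_{t'\geq t}u(t')\}$ is shown to converge; Harnack's inequality applied to $v(\cdot,p^*(t_0))-u(t_1+s)$ yields convergence of $u(t_n)$ to $v(\cdot,\bar p^*)$ along a sequence, and the $L^1$-contraction, parabolic regularity and conservation of mass identify the limit as $v(\cdot,\mean{u_0})$. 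You instead use Proposition \ref{prop:bounds} to confine the whole trajectory to a slab $\T^N\times[-L,L]$, truncate the flux outside it so as to restore \eqref{hyp:unif_lip} (hence property \textbf{(iii)} of Proposition \ref{prop:cellpb}) and the two-sided bound \eqref{hyp:initiallayer}, and then invoke Proposition \ref{prop:initiallayer} as a black box for the truncated flux, identifying the semigroups and the limiting cell solution by uniqueness. All the steps check out: the truncated flux is $W^{1,\infty}_{\mathrm{loc}}$ with $m=n=0$, the trajectory never sees the modified region, and the uniform limit stays in the slab, so it solves \eqref{cell} for the original flux and equals $v(\cdot,\mean{u_0})$ by uniqueness. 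What your route buys is brevity, and in fact more: since the one-sided hypothesis $u_0\leq v(\cdot,p_0)$ enters only through the definition of $L$ (which Proposition \ref{prop:bounds} already supplies), your argument proves Theorem \ref{thm:lgtime} directly, making the split into Proposition \ref{prop:conv_sup}, Corollary \ref{cor:conv_inf} and the sub/super-solution step of Section \ref{sec:per-stab} unnecessary. What it costs is self-containedness: its validity rests entirely on the quoted statement of Proposition \ref{prop:initiallayer} applied to a modified flux (if the result of \cite{initiallayer} in fact required more regularity than $W^{1,\infty}_{\mathrm{loc}}$, you would want a smoothed truncation, which is an easy fix), whereas the paper's argument reworks the mechanism with the original flux, which is also what makes the exponential-rate refinements directly available. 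Two cosmetic remarks: calling $u(t)$ the ``entropy solution'' of the truncated problem is a misnomer for this viscous equation — what you use is uniqueness of bounded solutions, which follows from the Contraction property on $\T^N$ — and taking $L=\max(M,\|v(\cdot,p_0)\|_{L^\infty})$ is harmless but unneeded: $L=M$ suffices.
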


Of course, the same result holds when the upper-bound is replaced by a lower-bound:

\begin{corol}
 Let $u_0\in L^\infty(\T^N)$ such that 
\be
\exists p_0\in \R,\quad u_0(y)\geq v(y,p_0) \quad\text{for a.e. } y\in \T^N.
\ee

Let $u(t)=S_t u_0$ for $t\geq 0$. Then, as $t\to\infty,$
$$
u(t)\to v(\cdot, \mean{u_0})\quad \text{in } L^\infty(\T^N).
$$

\label{cor:conv_inf}
\end{corol}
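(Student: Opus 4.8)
The plan is to reduce the Corollary to Proposition \ref{prop:conv_sup} by means of the reflection symmetry $u\mapsto -u$, which is available for scalar conservation laws. First I would introduce the reflected flux $\tilde A(y,v):=-A(y,-v)$ and check that it satisfies exactly the standing assumptions under which Proposition \ref{prop:conv_sup} was established (those of Proposition \ref{prop:cellpb} together with \eqref{hyp:A1}, cf. Theorem \ref{thm:lgtime}): since $|-v|=|v|$, the growth bounds \eqref{polygrowthai}--\eqref{polygrowthaN}, the structural condition \eqref{hyp:ex_cell}, and hypothesis \eqref{hyp:A1} all hold for $\tilde A$ with the same constants and exponents as for $A$, and $\tilde A\in W^{1,\infty}_{\text{loc}}(\T^N\times\R)^N$. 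I would write $\tilde S_t$ for the semi-group associated with \eqref{eq:evol_per} for the flux $\tilde A$, and $\tilde v(\cdot,p)$ for the cell solutions furnished by Proposition \ref{prop:cellpb} applied to $\tilde A$.

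Next I would record two elementary identities. If $u$ solves \eqref{eq:evol_per} for the flux $A$, then $w:=-u$ solves \eqref{eq:evol_per} for the flux $\tilde A$, since $\p_t w+\dv_y\tilde A(y,w)-\Delta_y w = -\left(\p_t u+\dv_y A(y,u)-\Delta_y u\right)=0$; by uniqueness of Kru\v{z}kov solutions this gives $\tilde S_t(-a)=-S_t a$ for every $a\in L^\infty(\T^N)$. Similarly, substituting $-\phi$ for $\tilde v$ in the cell equation for $\tilde A$ shows that $-\tilde v(\cdot,p)$ solves \eqref{cell} with average $-p$, so the uniqueness part of Proposition \ref{prop:cellpb} yields $\tilde v(\cdot,p)=-v(\cdot,-p)$ for all $p\in\R$.

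With these in hand the conclusion is immediate. Given $u_0\in L^\infty(\T^N)$ with $u_0\geq v(\cdot,p_0)$ a.e., set $w_0:=-u_0$; then $w_0\leq -v(\cdot,p_0)=\tilde v(\cdot,-p_0)$ a.e., so $w_0$ satisfies the hypothesis of Proposition \ref{prop:conv_sup} for the flux $\tilde A$, and that proposition gives $\tilde S_t w_0\to\tilde v(\cdot,\mean{w_0})$ in $L^\infty(\T^N)$ as $t\to\infty$. Substituting $\tilde S_t w_0=-S_t u_0$, $\mean{w_0}=-\mean{u_0}$ and $\tilde v(\cdot,-\mean{u_0})=-v(\cdot,\mean{u_0})$, this reads $-S_t u_0\to -v(\cdot,\mean{u_0})$, i.e. $S_t u_0\to v(\cdot,\mean{u_0})$ in $L^\infty(\T^N)$, which is the claim. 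The only point requiring a little care is the verification that $\tilde A$ inherits all the standing assumptions; but every hypothesis in play depends on $p$ only through $|p|$ and on $A$ only through $\p_p A$, $\dv_y A$ and their increments, so this check is routine and presents no genuine obstacle. Equivalently, one could simply redo the proof of Proposition \ref{prop:conv_sup} verbatim with all inequalities reversed and the roles of sub- and super-solutions interchanged; the reflection argument above merely packages this observation.
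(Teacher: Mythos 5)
Your reduction is correct, and it is a genuinely different packaging from what the paper intends: the paper offers no written proof of this corollary, the implicit argument being to rerun the proof of Proposition \ref{prop:conv_sup} with all inequalities reversed (replace $U(t,y)=\sup_{t'\geq t}u(t',y)$ by an infimum, the non-increasing $p^*$ by a non-decreasing $p_*$ defined as a supremum, and so on), exactly as your closing remark observes. Your route instead exploits the reflection $u\mapsto -u$ together with the reflected flux $\tilde A(y,v)=-A(y,-v)$, and the three verifications you carry out are precisely the ones needed: (a) $\tilde A$ satisfies \eqref{polygrowthai}, \eqref{polygrowthaN}, \eqref{hyp:ex_cell} (with $p_0$ replaced by $-p_0$) and \eqref{hyp:A1} with the same constants, so Proposition \ref{prop:conv_sup} applies to the flux $\tilde A$; (b) $\tilde S_t(-a)=-S_t a$ by uniqueness of Kru\v{z}kov solutions; (c) $\tilde v(\cdot,p)=-v(\cdot,-p)$ by the uniqueness statement in Proposition \ref{prop:cellpb}, whence $\tilde v(\cdot,\mean{-u_0})=-v(\cdot,\mean{u_0})$. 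With these, the hypothesis $u_0\geq v(\cdot,p_0)$ becomes $-u_0\leq \tilde v(\cdot,-p_0)$ and the conclusion transfers back as you state. What the reflection buys is economy and a clean formal statement of the symmetry (no need to re-examine the Harnack/monotonicity argument); what the paper's mirrored rerun buys is that no hypothesis-transfer check is needed at all, since one never leaves the original flux. Both are complete proofs; yours is fine as written.
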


\begin{proof}[Proof of Proposition \ref{prop:conv_sup}]
According to the previous section (see Proposition \ref{prop:bounds}), 
$$
\sup_{t\geq 0}\|u(t) \|_{L^\infty(\T^N)}<+\infty.
$$ 
Additionally, the Comparison principle yields
$$
u(t,y)\leq v(y,p_0)\quad\forall t>0, \ \forall y\in \T^N.
$$
From now on, the proof is very close to that in \cite{initiallayer}, Section 2: we   recall the main steps for the reader's convenience. Set
$$
\begin{aligned}
U(t,y):=\sup_{t'\geq t} u( t',y),\quad t\geq 0, y\in \T^N,\\
p^*(t):=\inf \left\{ p\in \R, v(y,p)\geq U(t,y)\text{ for a.e. }y\in \T^N \right\}, t\geq 0.
\end{aligned}
$$
Then $U$ belongs to $L^\infty([0,\infty)\times \T^N)$ (since $u$ is uniformly bounded in time), and $U$ is clearly a non-increasing function. Moreover, $U$ satisfies
$$
U(t,y)\leq v(y,p_0)\quad\forall t>0, \ \forall y\in \T^N.
$$
As a consequence, $p^*(t)$ is bounded from above by $p_0$, and $p^*$ is a non-increasing function. Moreover, $p^*$ is bounded from below, since for almost every $y\in\T^N$,
$$
v(y,p^*(t))\geq U(t,y) \geq -\| u\|_{L^\infty([0,\infty)\times \T^N)},
$$
and thus
$$
\forall t\geq 0,\quad p^*(t)=\mean{v(\cdot,p^*(t))}\geq -\| u\|_{L^\infty([0,\infty)\times \T^N)}.
$$
Hence $p^*$ is a bounded decreasing function, and thus $p^*(t)$ has a finite limit, which we denote by $\bar p^*$, as $t\to \infty.$

The idea is to prove that $u(t)- v(\cdot, \bar p^*)$ converges towards zero as $t\to \infty.$  Let $\e>0$ be arbitrary. We first choose $t_0>0$ such that 
$$
\|v(p^*(t))- v(\bar p^*)\|_\infty\leq \e\quad\forall t\geq t_0,
$$
and then we pick $p<\bar p^*$ and $y_0\in \T^N$ such that
$$
v(y_0, \bar p^*)-\e \leq v(y_0,p)\leq U(t_0+1,y_0)\leq v(y_0,p^*(t_0+1)) \leq v(y_0, \bar p^*)+\e.
$$
Now, choose $t_1\geq t_0+1$ such that 
$$
 U(t_0+1,y_0)-\e \leq u(t_1,y_0)\leq U(t_0+1,y_0).
$$
By construction, the function 
$$
 V :(s,y)\in\left( -1,1 \right) \times \T^N \mapsto v(y,p^*(t_0)) - u(t_1+ s ,y) 
$$
is a non-negative solution of a linear diffusion equation of the type
$$
\p_t V + \dv_y (b V)- \Delta_y V=0
$$
for some vector field $b\in L^\infty([-1,1]\times \T^N)^N$. Hence by Harnack's inequality, there exists a constant $C$ such that
$$
\sup_{y\in \T^N} V\left(-\frac{1}{2},y\right)\leq C \inf_{y\in \T^N}V\left(0,y\right)\leq C \e.
$$
Thus, there exists a sequence  of positive numbers $(t_n)$ such that $\lim_{n\to\infty}t_n=+\infty$ and such that $u(t_n)$ converges towards $v(\bar p^*)$ in $L^\infty$. The $L^1$ contraction principle, together with parabolic regularity results, entails that the whole family $u(t)$ converges. Eventually, we obtain that $\bar p^*= \mean{u_0}$ by conservation of mass.

\end{proof}

The core of the proof of Theorem \ref{thm:lgtime} then lies in the following argument: if $u_0\in L^\infty$ is arbitrary, we set 
$$\begin{aligned}
   \tilde u_0:= \inf(u_0 , v( p)),\\
\tilde u:= S_t \tilde u_0.
  \end{aligned}
$$ 
The value of parameter $p$ above is irrelevant. One can choose for instance $p=0$, or $p= \mean{w_0}$.

The function $\tilde u_0$ obviously satisfies the assumptions of Proposition \ref{prop:conv_sup}. Hence as $t\to\infty$,
$$
\tilde u(t)\to v\left(\mean{\tilde u_0} \right)\quad\text{in }L^\infty,
$$
and thus there exists a positive time $t_0$ such that for $t\geq t_0,$ for all $y\in \T^N$,
$$
\tilde u(t,y)\geq v\left(y,  \mean{\tilde u_0} -1\right).
$$
On the other hand, notice that $\tilde u_0\leq u_0$ by definition, and thus by the comparison principle,
$$
\tilde u(t)\leq u(t)\quad \forall t.
$$
Hence, for $t\geq t_0,$
$$
 u(t)\geq v\left( \mean{\tilde u_0} -1\right).
$$
In particular, $u(t_0)$ satisfies the assumptions of Corollary \ref{cor:conv_inf}, and thus, as $t\to \infty$,
$$
S_t u(t_0) \to v\left( \mean{u(t_0)} \right).
$$
 Since
$$
u(t)=S_{t-t_0} u(t_0)
$$
and $\mean{u(t_0)}=\mean{u_0}$ by the Conservation property, we deduce eventually that 
$$
u(t)\to v\left( \mean{u_0} \right)\quad \text{as }t\to \infty.
$$
Thus Theorem \ref{thm:lgtime} is proved.

\section{Stability of shock profiles in one space dimension - Part II}

\label{sec:shock_stab:2}

This section is devoted to the proof of additional results on  shock stability in the whole space case. We start by proving Proposition \ref{prop:stab_shock2}, and then we prove that the conclusion of Proposition \ref{prop:stab_shock2} still holds when \textbf{(H)} is replaced by \textbf{(H')}. We have not been able to prove that \textbf{(H')} is satisfied for arbitrary fluxes. Thus, at the end of this section, we prove Proposition \ref{prop:linear} and thereby provide explicit examples for which \textbf{(H')} is satisfied. We also explain which difficulties are encountered when trying to prove \textbf{(H')}.

\vskip2mm

We start by introducing some notation. Following \cite{SerreHandbook}, we denote by $\cG$ the set of shock profiles connecting $v(p_l)$ to $v(p_r)$, and we set
$$\begin{aligned}
   \cA:=\left\{ u\in L^\infty_\text{loc}(\R), \ \exists U\in\cG,\  u\in U + L^1(\R)\right\},\\
\cA_0:=\left\{ u\in \cA,\ v(\min(p_l, p_r))\leq u\leq v(\max(p_l, p_r))\right\}.
  \end{aligned}
$$
Our goal is to prove that for all $u_0\in\cA$,
$$
d(S_t u_0, \cG)=0,
$$
where $d(u,A)$ denotes the $L^1$ distance from $u$ to a set $A$. Notice that the Contraction principle easily entails that the function $t\mapsto d(S_t u_0, \cG) $ is decreasing. Hence, its limit as $t\to\infty$ exists; for all $u_0\in \cA$, set
$$
\ell_0 (u_0):=\lim_{t\to\infty} d(S_t u_0, \cG).
$$
Theorem \ref{thm:stab_shock1} states that $\ell_0(u)=0$ for all $u\in \cA_0.$ Moreover, it follows from the Contraction principle  that $\ell_0(u_0)$ is a contraction, i.e.
$$
\left| \ell_0(u) -\ell_0(v) \right|\leq \|u-v \|_{L^1} \quad\forall u,v\in \cA.
$$
Additionally, for all $t\geq 0$ and for all $u\in \cA$,
$$
\ell_0(u)=\ell_0(S_t u).
$$
Similarly, we define, for all $u_0\in\cA$,
$$
\ell_1 (u_0):=\lim_{t\to\infty} d(S_t u_0, \cA_0).
$$
The function $\ell_1$ is well-defined: indeed, the Comparison property entails that $\cA_0$ is stable by the semi-group $S_t$. Consequently, by the Contraction principle, the function $t\mapsto d(S_t u_0, \cA_0)$ is decreasing and non-negative, and thus has a finite limit as $t\to\infty.$ Moreover, the functional $\ell_1$ enjoys the same properties as $\ell_0$: $\ell_1$ is a contraction on $\cA$ and $\ell_1(u)=\ell_1(S_t u)$ for all $t\geq 0$. Eventually, since $\cG\subset \cA_0$, we deduce that
$$
\ell_1(u)\leq \ell_0(u)\quad\forall u\in\cA.
$$

\subsection{Proof of Proposition \ref{prop:stab_shock2}}
We now tackle the proof of Proposition \ref{prop:stab_shock2}, which is very similar to \cite{SerreHandbook}, paragraph 3.5. 
Let $u_0\in \cA$ be arbitrary. For all $v\in \cA_0$, we have
$$
\ell_0(u_0)\leq \ell_0(v) + \|u_0-v\|_1 \leq \|u_0-v\|_1.
$$Thus for all $u_0\in \cA$,
$$
\ell_0(u_0)\leq d(u_0, \cA_0).
$$
Replacing $u_0$ by $S_t u_0$ in the previous inequality, we infer that for all $u_0\in\cA_0$,
$$
\ell_0(u_0)\leq \lim_{t\to\infty }d(S_t u_0, \cA_0)=\ell_1(u_0).
$$
Thus $\ell_0$ and $\ell_1$ take the same values on $\cA$, and
it suffices to prove that
\be
\ell_1(u_0)=\lim_{t\to\infty} d(S_t u_0, \cA_0)=0.\label{dA_0}
\ee
Notice that if $u\in\cA$, then, with $p^+=\max(p_l,p_r)$, $p^-=\min(p_l,p_r)$,
$$
d(u, \cA_0)= \left\| \left(u-v(p^+)\right)_+\right\|_1 + \left\| \left(u-v(p^-)\right)_-\right\|_1.
$$

We now prove that assumption \textbf{(H)} implies \eqref{dA_0}.
According to Lemma \ref{lem:L10}, there exists a shock profile $U$ such that $u\in U + L^1_0(\R)$.
We now define functions $a^+, a^-$ in $v(p^+) + L^1_0$ and $v(p^-) + L^1_0$ respectively, such that
$$
a^-(y)\leq u_0(y)\leq a^+(y).
$$
Let us explain for instance the construction of $a^+$. If $u_0(y)> v(y,p^+),$ we set
$$
a^+(y)=u_0(y).
$$
On the other hand, since $u\in U + L^1$ and $U$ is asymptotic to $v(p^+), v(p^-)$, we have
$$
\int_\R (v(y,p^+)- u_0(y))_+\:dy \geq \int (v(p^+) -U) - \| u_0-U\|_1 = +\infty.
$$
Hence there is enough room, between the graphs of $v(y,p^+)$ and  $u_0(y)$ (restricted to the set where $u_0(y)\leq v(y,p^+)$), to insert a function $b^+$ such that 
$$\begin{aligned}
u_0(y)\leq v(y,p^+)\Rightarrow u_0(y)\leq b^+(y) \leq v(y,p^+),\\
\int_{\R}\mathbf 1_{u_0\leq v(y,p^+)}(v(y,p^+)- b^+(y))\:dy =\int_{\R}\mathbf 1_{u_0>v(y,p^+)}(u_0(y)-v(y,p^+))\:dy.
  \end{aligned}
$$
On the set where $u_0(y)\leq v(y,p^+)$, we define $a^+(y)=b^+(y)$. It is obvious that the function $a^+$ belongs to $v(p^+)+ L^1_0$ and that $u_0\leq a^+$. The function $a^-$ is defined in a similar fashion.
Thanks to the comparison principle, we have
$$
S_t a^- \leq S_t u_0 \leq S_t a^+\quad\forall t\geq 0.
$$
Consequently,
\be\label{maj:A(U)}
d(S_t u_0, \cA_0)\leq \left\|S_t a^+- v(p^+) \right\|_{L^1} + \left\|S_t a^-- v(p^-) \right\|_{L^1}.
\ee
From the above inequality, it is clear that \textbf{(H)} entails \eqref{dA_0}: if $S_t a^\pm -v(p^\pm)$ vanish in $L^1$, then $\ell_1(u_0)=0.$ In other words, the stability of shock profiles follows from to the stability of solutions of equation \eqref{cell} in $L^1_0$. Thus, we now focus on the case when merely \textbf{(H')} is satisfied.

Let $\delta>0$. If $u_0\in\cA$ is such that 
$$
\|(u_0-v(p^+))_+\|_1\leq \delta,\quad \|(u_0-v(p^-))_-\|_1\leq \delta,
$$
then by construction
$$
\|a_+- v(p^+)\|_1\leq 2\delta,\quad\|a_-- v(p^-)\|_1\leq 2\delta.
$$
And according to {\bf(H')}, there exists $\delta_0>0$ such that if $\delta\leq \delta_0$, then
$$
\lim_{t\to\infty}\| S_t a_\pm - v(p^\pm)\|_1=0,
$$
and thus the right-hand side of \eqref{maj:A(U)} vanishes as $t\to\infty$. Thus $\ell_1(u_0)=0$.

Hence we now focus on the case where
$$
\|(u_0-v(p^+))_+\|_1\geq \delta_0\text{ or }\|(u_0-v(p^-))_-\|_1\geq \delta_0.
$$
We then define the function 
$$
\bar u_0(y):=\left\{ 
			\begin{array}{ll}
			v(y,p^+ ) + \alpha_+ (u_0-v(p^+))&\text{ if }u_0 (y)>v(y,p^+ ),\\
			u_0(y)&\text{ if } v(y,p^-)\leq u_0(y)\leq v(y,p^+) ,\\
			v(y,p^- ) + \alpha_- (u_0-v(p^-))&\text{ if }u_0 (y)<v(y,p^-),
\end{array}
\right.
$$
where
$$
\alpha_\pm= \left\{ 
			\begin{array}{ll}\frac{\| (u_0-v(p^\pm))_\pm\|_1}{\delta_0}&\text{ if } \| (u_0-v(p^\pm))_\pm\|_1>\delta_0,\\
			 0&\text{ else.}
			\end{array}\right.
$$
Since $\bar u_0-u_0\in L^1(\R)$, $\bar u_0\in\cA$. Moreover,
\begin{eqnarray*}
\|\bar u_0-u_0 \|_1&=& (1-\alpha_+) \|(u_0-v(p^+))_+\|_1+ (1-\alpha_-) \|(u_0-v(p^-))_-\|_1\\
&\leq & d(u_0,\cA_0) - \delta_0.
\end{eqnarray*}
Notice that $\ell_1(\bu_0)=0$. Since $\ell_1$ is a contraction, we have
\be\label{in:l1}
\ell_1(u_0)\leq \ell_1(\bu_0) + \| u_0-\bu_0\|_1 \leq  d(u_0,\cA_0) - \delta_0.
\ee

We now argue by contradiction. Assume that for all $t\geq 0$,
$$
\|(S_tu_0-v(p^+))_+\|_1\geq \delta_0\text{ or }\|(S_tu_0-v(p^-))_-\|_1\geq \delta_0.
$$
Then we may replace $u_0$ by $S_t u_0$, for $t\geq 0$ arbitrary, in inequality \eqref{in:l1}. We obtain
$$
\ell_1(u_0)=\ell_1(S_t u_0)\leq d(S_t u_0,\cA_0) - \delta_0.
$$
Passing to the limit as $t\to\infty$, we infer
$$
\ell_1(u_0)\leq \ell_1(u_0)-\delta_0,
$$
which is absurd. Hence there exists $t_0\geq 0$ such that
$$
\|(S_{t_0}u_0-v(p^+))_+\|_1< \delta_0\text{ and }\|(S_{t_0}u_0-v(p^-))_-\|_1< \delta_0.
$$
We have already proved that $\ell_1(S_{t_0} u_0)=0.$ We deduce that $\ell_1(u_0)=0$, and thus $\ell(u_0)=0.$

Consequently, assumption \textbf{(H')} entails that $\ell(u)=0$ for all $u\in\cA$.

\subsection{Stability of stationary periodic solutions in $L^1$}

We conclude this article by presenting some situations in which \textbf{(H)} or \textbf{(H')} hold true. We begin by explaining the linear case: assume that there exists a function $b\in\mathcal C_\text{per}(\R)$ such that 
$$
A(y,p)=b(y)p\quad\forall (y,p)\in[0,1]\times \R.
$$
In this case, the stability of periodic solutions is a consequence of a result of Adrien Blanchet, Jean Dolbeault, and Michal Kowalczyk (see \cite{BDK, BDK2}): indeed, set $\omega=-\mean{b}$, and let $\psi\in\mathcal C^2_\text{per}(\R)$ such that $\psi'=\mean{b}-b.$ Let $p\in\R$ be arbitrary, and let $u_0\in v(p)+ L^1_0$. Then, by linearity, $w(t)=:S_tu_0 -v(p)$ solves an equation on the type
$$\begin{aligned}
\p_t w + \p_y (b(y) w)-\p_{yy} w=0,\\
w_{|t=0}=w_0\in L^1_0(\R).
\end{aligned}
$$
It is then easily checked that the function $f$ defined by
$$
f(t,x)=w(t,x-\omega t)
$$
solves
\be
\p_t f(t,x) =\p_{xx} f (t,x)+ \p_x \left( \psi'(x-\omega t) f(t,x)\right).\label{eq:f}
\ee
This is precisely the case studied by Blanchet, Dolbeault and Kowalczyk. Let us recall briefly their method of analysis before stating their result. The first idea is to study the motion in the moving frame associated with the center of mass. Indeed, set
$$
\bar x (t):=\int_\R xf(t,x)\:dx.
$$
Then it can be easily proved, using the linearity of the evolution equation and the periodicity of $\psi'$, that
$$
\lim_{t\to\infty}\frac{d \bar x} {dt}(t)=\int_0^1 \psi' m,
$$
where $m$ is the unique probability measure on $[0,1]$ solving
\be\label{eq:m}
-m'' + \p_x((\omega+ \psi')m)= -m'' + \p_x(bm)=0.
\ee
Set $c:=\mean{\psi' m}$. The next idea is to perform a parabolic change of coordinates in the equation satisfied by $f$, in order to focus on the long-time behavior. Precisely, define $U$  such that
$$
f(t,x)=\frac 1{\sqrt{1+2t}} U\left( \log \sqrt{1+2t} , \frac{x-ct}{\sqrt{1+2t}}\right).
$$
Then $U$ solves an equation of Fokker-Planck type, with a penalization growing exponentially with time, and with coefficients which have fast oscillations for large times. Hence, this leads to the use of homogenization techniques, with the additional difficulty that the size of the oscillations in space depends on the time variable. An approximate solution is constructed thanks to a two-scale Ansatz. The convergence proof then relies on entropy dissipation methods. We are now ready to state their result.

\begin{prop}[Blanchet, Dolbeault, Kowalczyk] Let $w_0\in L^1_0\cap L^\infty(\R)$, and let $f$ be the solution of \eqref{eq:f} with initial data $f_{|t=0}=w_0$. Assume that there exists a constant $C_0>0$ such that
\be\label{momBDK}
\sup_{t\geq 0 }\frac{1}{(1+ 2t)^2}\int_\R |f(t,x)| \; (x-ct)^4\:dx\leq C_0.
\ee
Then there exists a constant $C_1$, depending only on $w_0$ and $C_0$, and a positive constant $\alpha$, depending only on $b$, such that for all $t\geq 0$,
$$
\|f(t) \|_1\leq \frac{C_1}{t^\alpha}.
$$
\label{prop:BDK}
\end{prop}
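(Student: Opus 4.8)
The plan is to follow the strategy of Blanchet, Dolbeault and Kowalczyk \cite{BDK}, whose main steps were already outlined above; I sketch the key points. The starting point is the self-similar change of unknown
$$
f(t,x)=\frac{1}{\sqrt{1+2t}}\,U\!\left(\log\sqrt{1+2t},\ \frac{x-ct}{\sqrt{1+2t}}\right),
$$
which turns \eqref{eq:f} into an equation of Fokker--Planck type for $U(\tau,\xi)$: the rescaled time is $\tau=\log\sqrt{1+2t}$, the Laplacian becomes $\p_{\xi\xi}U$, the self-similar variable produces the confining drift $\p_\xi(\xi U)$, and the oscillatory term $\psi'(x-\omega t)$ is evaluated at a point whose dependence on $\xi$ has frequency of order $e^{\tau}$, and whose contribution to the equation is itself amplified by a factor growing like $e^\tau$. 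Thus one is led to a homogenization problem in which the size of the oscillations is time dependent and the penalization parameter grows exponentially in $\tau$.

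Second, I would identify the effective operator. Freezing the fast scale, the corrector is governed by \eqref{eq:m}, i.e.\ by the invariant probability measure $m$ of the drift $b$; the choice $c=\mean{\psi'm}$ is precisely the one that removes the secular drift, so that in the moving frame the center of mass grows sublinearly. One then builds a two-scale Ansatz of the form $U\approx U_0(\tau,\xi)+e^{-\tau}U_1(\tau,\xi,\text{fast})+\cdots$, where $U_0$ solves the homogenized Fokker--Planck equation (a heat equation in self-similar variables with an effective diffusion coefficient, whose stationary state is a Gaussian), and where the higher-order profiles are periodic in the fast variable and chosen to cancel the leading oscillatory contributions. The error produced by this Ansatz must be estimated in a norm compatible with the entropy method.

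Third, I would run the entropy dissipation argument. Working with the relative entropy (or a quadratic analogue) of $U$ with respect to the Gaussian stationary state, one differentiates it in $\tau$, controls the oscillatory remainders using the corrector bounds together with the moment control \eqref{momBDK} — which is exactly what ensures that the $(x-ct)^4$-weighted tails of $f$ do not spoil the dissipation functional uniformly in time — and obtains a differential inequality forcing the entropy, hence by a Csisz\'ar--Kullback or weighted Poincar\'e inequality the $L^1$ norm of $U$, and therefore of $f$, to decay at an algebraic rate $t^{-\alpha}$, with $\alpha$ depending only on the spectral gap of the homogenized operator, i.e.\ only on $b$.

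The main obstacle is precisely the interplay between the exponentially growing penalization and the time-dependent frequency of the oscillations: standard homogenization estimates deteriorate as the frequency increases, so the corrector bounds must be quantified sharply enough that the accumulated error, integrated against the exponential weight, stays summable. The fourth-moment hypothesis \eqref{momBDK} is the technical device that keeps the tail contributions to the entropy dissipation under control; as noted after the statement, it is expected to hold for a wide class of data (in particular it propagates from a finite fourth moment of $w_0$ by a direct estimate on \eqref{eq:f}), so it is not a genuine restriction. For the complete argument I refer to \cite{BDK, BDK2}.
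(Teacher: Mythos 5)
This proposition is not proved in the paper: it is quoted as an external result of Blanchet, Dolbeault and Kowalczyk, and the paper only recalls the strategy of \cite{BDK} (moving frame attached to the center of mass, parabolic self-similar change of variables leading to a Fokker--Planck equation with exponentially growing penalization and fast oscillations, two-scale Ansatz, entropy dissipation), which is exactly the outline you reproduce before deferring to \cite{BDK, BDK2}. So your proposal takes essentially the same route as the paper, namely a sketch of the BDK argument together with a citation rather than a self-contained proof.
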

\begin{rem}
In fact, the result of  Blanchet, Dolbeault and Kowalczyk is a little more accurate than the above proposition. Indeed, they prove that  any non-negative solution $f$ of \eqref{eq:f} behaves asymptotically like 
$$
\frac{\int f_{|t=0}}{\sqrt{1+2t}}\;m(x-\omega t)\; h_\infty\left( \frac{x-c t}{\sqrt{1+2t}} \right),
$$
where $h_\infty$ is a Gaussian function.
In the present case, since $w_0\in L^1_0$, the solutions $f^+$ and $f^+$ of \eqref{eq:f} with initial data $f^\pm_{|t=0}= (w_0)_\pm$ have the same asymptotic behaviour, and thus $w=f^+-f^-$ decays towards zero.

\end{rem}

\vskip2mm

Consequently, in the linear case, assumption \textbf{(H)} is always satisfied, provided \eqref{momBDK} holds. However, if the flux $A$ is linear, standing shocks do not exist in general (see Remark \ref{rem:linear}). Thus, we now modify slightly the setting in order to use the results of the linear case, but in a non-linear context. Precisely, we now prove Proposition \ref{prop:linear}, which, as we have already stressed,  provides an explicit example of shock stability without any assumption of the initial data except \eqref{moments} (see Corollary \ref{cor:convex}).

Thus, let $A$ be a non-linear flux which satisfies the assumptions of Proposition \ref{prop:linear} for some $p\in\R$. Let $u_0\in v(p)+ L^1_0$ be arbitrary. Notice that we do not assume that 
$$
\| u_0 -v(p)\|_\infty\leq \eta,
$$
so that the use of the linear setting is not straightforward. The idea is to prove, using dispersion inequalities, that
\be
\lim_{t\to\infty}\|S_t u_0 - v(p) \|_\infty=0,\label{conv_infty}
\ee
provided $\| u_0 -v(p)\|_1$ is sufficiently small. If the above convergence is true,  there exists $t_0\geq 0$ such that for $t\geq t_0,$ for all $y$, $(S_t u_0)(y)\in[v(y,p)-\eta, v(y,p)+\eta] $, and thus
$$
A(y, S_t u_0(y))= A(y,v(y,p))+ b(y) w(t,y)\quad \forall t\geq t_0,\ \forall y\in\R,
$$
with $w(t)=S_t u_0 -v(p)$. Consequently, for $t\geq t_0$, $w$ solves a linear parabolic equation, and we can apply the previous analysis. The assumption on the moments of order four then becomes
\be
\exists t_0\geq 0,\quad \sup_{t\geq t_0}\frac{1}{(1+2t)^2}\int_{\R}|w(t,y)|\; (y-\gamma t)^4\:dy<\infty,\label{moments}
\ee
where $\gamma:=c-\omega$. Notice that $\gamma=\bar A'(p)$ in the present setting. Thus, we now focus on the proof of \eqref{conv_infty}.

As observed before, the function $w(t)=S_t u_0 - v(\cdot, p)$ ($t\geq 0$) is a solution of 
$$
\p_t w + \p_{y} B(y,w) -\p_{yy} w=0,
$$
where the flux $B$ is defined by
$$
B(y,\xi)=A(y,v(y,p)+ \xi) - A(y,v(y,p)),\quad y\in\R, \ \xi\in \R.
$$
The idea is to linearize the flux $B(\cdot, \xi)$ around $\xi=0$, and to use energy methods. Let
$$
\begin{aligned}
 b(y)=\p_v A(y,v(y,p)),\\
\tilde B(y,\xi)= A(y,v(y,p)+\xi) - A(y,v(y,p)) - b(y)\xi.
\end{aligned}
$$
Since $A\in W^{2, \infty}(\T\times\R)$, the flux $\tilde B$ is quadratic in a neighbourhood of $\xi=0$.
According to Proposition \ref{prop:bounds}, $w$ is bounded in $L^\infty([0,\infty)\times\R)$, and thus there exists a constant $C$ such that
$$
| \tilde B(y,w(t,y))|\leq C |w(t,y)|^2,\quad\forall t\geq 0,\ \forall y\in\R.
$$
The function $w$ solves
\be\label{eq:w_tildeB}
\p_t w + \p_{y} (b(y) w) -\p_{yy} w=- \p_y\tilde B(y,w).
\ee
Following an idea of Philippe Michel, St\'ephane Mischler and Beno\^it Perthame (see \cite{MMP}), we consider the invariant measure $m$, defined by \eqref{eq:m}.
Notice that $m=\p v /\p p$ in the present case, and there exists a positive constant $C\geq 1$ such that
$$
C^{-1}\leq m\leq C,\quad |\p_y m|\leq C\quad\text{a.e.}
$$
Moreover, according to \cite{MMP}, the following identity holds 
$$
\p_t \left( m \left| \frac{w}{m} \right|^2 \right) + \p_y \left( m \left| \frac{w}{m} \right|^2 \right)- \p_{yy} \left( m \left| \frac{w}{m} \right|^2 \right)=-2 m \left| \p_y \left( \frac{w}{m} \right) \right|^2 - 2\frac{w}{m}\p_y\tilde B(y,w).
$$
Integrating the above equation on $\R$, we obtain
\begin{eqnarray*}
 \frac{1}{2}\frac{d}{dt}\int_{\R}m\left| \frac{w}{m} \right|^2 + \int_{\R}m\left|\partial_y\frac{w}{m} \right|^2 &\leq& C \int_{\R}|w|^2\left|\partial_y \frac{w}{m} \right|\\&\leq& C \left\|\frac{w}{m}  \right\|_{L^4(m(y)dy)}^2\left\|\nabla\frac{w}{m}  \right\|_{L^2(m(y)dy)}.
\end{eqnarray*}
Notice that for all $p\in[1,\infty)$, the $L^p$ norm is equivalent to the $L^p(m(y)dy)$ norm.
We now use the following Poincar\'e inequality: there exists a positive constant $C$, such that for all $\phi\in L^1(\R)\cap H^1(\R)$, there holds
\be
\|\phi\|_{L^4(\R)}\leq C \| \phi'\|_{L^2(\R)}^{1/2}\| \phi\|_{L^1(\R)}^{1/2}.
\ee
Taking $\phi=w/m$,  we are led to
\begin{eqnarray*}
 \frac{1}{2}\frac{d}{dt}\int_{\R}m \left| \frac{w}{m} \right|^2 + \int_{\R}m \left| \nabla \frac{w}{m} \right|^2 &\leq& C \| w\|_{L^1(\R)}\int_{\R}m \left| \nabla \frac{w}{m} \right|^2\\
& \leq&  C \| w_0\|_{L^1(\R)}\int_{\R}m \left| \nabla \frac{w}{m} \right|^2 .
\end{eqnarray*}

Now, if $\| w_0\|_{L^1(\R)}$ is sufficiently small, we obtain
\be
\frac{d}{dt}\int_{\R}m \left| \frac{w}{m} \right|^2 + \int_{\R}m \left| \nabla \frac{w}{m} \right|^2 \leq 0.\label{in:decayL2}
\ee
We then proceed as in \cite{SerreHandbook} (Paragraph 1.1): using the Nash inequality together with the decay of the $L^1$ norm, we deduce that for all $t\geq 0$
$$
\left\| \frac{w(t)}{m}\right\|_{L^2(m)}\leq C \|w(t) \|_{L^1}^{2/3} \left\|\nabla\frac{w(t)}{m}  \right\|_{L^2(m)}^{1/3}\leq C \|w_0\|_{L^1}^{2/3} \left\|\nabla\frac{w(t)}{m}  \right\|_{L^2(m)}^{1/3},
$$
and thus we infer
$$
\frac{d}{dt}\int_{\R}m \left| \frac{w}{m} \right|^2 + \frac{C}{\|w_0\|_{L^1}^4}\left(\int_{\R}m \left| \frac{w}{m} \right|^2 \right)^3\leq 0.
$$
Integrating the above differential inequality, we obtain eventually
$$
\|w(t)\|_{L^2(\R)}\leq C \left\|\frac{w}{m} \right\|_{L^2(m)}\leq C\frac{\| w_0\|_{L^1}}{t^{1/4}}.
$$
Thus the $L^2$ norm decays with an algebraic rate. 

We now use a parabolic regularity result, from which the decay of the $L^\infty$ norm immediately follows. The key point lies in the following inequality: there exists a constant $C$ such that for all $t\geq 1,$
$$
\|w(t) \|_\infty\leq C \| w(t-1)\|_2.
$$
Indeed, $w$ satisfies an equation of the type
\be
\p_t w + \p_y(a(t,y) w(t,y)) -\p_{yy} w=0,\label{eq:w_linear}
\ee
where the coefficient $a$ is bounded in $L^\infty([0,\infty]\times\R)$. Using an energy estimate, it can be easily proved that there exists a positive constant $\alpha$, depending only on $\| a\|_\infty$, such that if $W$ is any solution of \eqref{eq:w_linear}, then for any $t\geq s\geq 0$
$$
\| W(t)\|_2\leq e^{\alpha (t-s)} \| W (s)\|_2.
$$
Moreover, according to Harnack's inequality, there exists a constant $C$ such that for any non-negative solution $W$ of \eqref{eq:w_linear}, for all $t\geq 0$,
$$
W(t,y)\leq C\inf_{z\in [y-1,y+1]} W(t+1,z)\:dz.
$$
And if $t\geq 1$,
\begin{eqnarray*}
 \inf_{z\in [y-1,y+1]} W(t+1,z)\:dz&\leq &\left(\frac{1}{2}\int_{y-1}^{y+1} W^2(t+1,z)\:dz\right)^{1/2}\\&\leq &C \left(\int_{y-1}^{y+1} W^2(t-1,z)\:dz\right)^{1/2}\\&\leq &C\| W(t-1)\|_{L^2(\R)}.
\end{eqnarray*}
Now, let $t\geq 1$ be arbitrary, and let $W_1^0:=(w(t-1))_+$, $W_2^0:=(w(t-1))_-$. For $s\geq t-1$, consider the solution $W_i$ of \eqref{eq:w_linear} such that $W_{i|s=t-1}=W^0_i$. The functions $W_i$ are non-negative by the maximum principle. Consequently, for all $y\in \R$, we have
$$
W_i(t,y)\leq C \|W_i^0\|_2.
$$
Since $w=W_1-W_2$, we deduce that
$$
\sup_{y\in\R}|w(t,y)|\leq C \|w(t-1)\|_2.
$$
The decay of the $L^\infty$ norm follows. This concludes the proof of Proposition \ref{prop:linear}. \qed

\vskip3mm

In fact, the decay of all $L^p$ norms for $p\in(1,\infty]$ is a general property, which is true even when the flux $A$ does not satisfy the assumptions of Proposition \ref{prop:linear}. However, if the flux $A$ is not linear in a neighbourhood of $v(p^\pm)$, then we are unable to conclude to the stability of periodic solutions. Let us now explain briefly where the difficulty lies: a natural idea would be to treat the term $\tilde B$ as a perturbation in \eqref{eq:w_tildeB}, and to write a Duhamel formula of the type
$$
w(t)=S^0_t w_0 + \int_0^t S^0_{t-s}\left[\p_y \tilde B(\cdot, w(s))\right]\:ds,
$$
where $S^0$ is the (linear) semi-group associated with the equation$$\p_t w + \p_y(bw) - \p_{yy} w=0.$$
This is exactly the method used in \cite{SerreHandbook} in order to prove the stability of constants in the viscous model. Thanks to the results of \cite{BDK}, it is already known that $S^0_t w_0$ decays in $L^1$ as $t\to\infty$. 
However, the method used in \cite{SerreHandbook} cannot be used here, essentially because of the complicated dependance of the constant $C_1$ appearing in Proposition \ref{prop:BDK} on the function $w_0$. Indeed, looking carefully at the proof in \cite{BDK}, it can be checked that
$$
C_1\leq C \|w_0\|_1\left( \int_\R \frac{w_0^+}{\| w_0^+\|_1}\ln \left( \frac{w_0^+}{\| w_0^+\|_1 h_\infty} \right) +  \int_\R \frac{w_0^-}{\| w_0^-\|_1}\ln \left( \frac{w_0^-}{\| w_0^-\|_1 h_\infty} \right) + C_0\right)^{1/2},
$$
where $h_\infty$ is a normalized gaussian function, $w_0^+, w_0^-$ are the positive and negative real parts of $w_0$, and
$$
C_0:=\sup_{t\geq 0}\frac{1}{(1+2t)^2}\int_\R |S_t w_0|(x-\bar A'(p)t)^4.
$$
The dependance of $C_1$ with respect to $C_0$ and the relative entropies of $w_0^+$ and $w_0^-$ is disastrous for the use of the Duhamel formula: indeed, one has to control, for instance,
$$
\int_\R \frac{(\p_y \tilde B(\cdot, w(s)))_+ }{\| \p_y \tilde B(\cdot, w(s)))_+\|_1}\ln \left( \frac{\p_y \tilde B(\cdot, w(s)))_+}{\|\p_y \tilde B(\cdot, w(s)))_+\|_1 h_\infty} \right).
$$
\vskip1mm

Consequently, another approach must certainly be chosen in order to prove the stability of periodic solutions in the general case. Given the complexity of the proof in the mere linear setting (see \cite{BDK}), this question  goes beyong the scope of this article. Also, we emphasize that it is not clear that the methods of \cite{BDK} can be adapted to a nonlinear setting: indeed, the proof of convergence relies on the use of entropy dissipation techniques, which are more adapted to the linear case. We refer to \cite{EVZ} for additional results and techniques concerning the asymptotic behaviour of non linear viscous conservation laws in the homogeneous case.

\section*{Appendix A - Proof of Lemma \ref{lem:convex}}

Assume that the flux $A$ is convex, and let $p_1,p_2\in \R$ such that $p_1\neq p_2$, and let $ \lambda\in(0,1).$ In the following, we set
$$\begin{aligned}
   	v_i(y)=v(y,p_i),\quad i=1,2,\\
	w=\lambda v_1 + (1-\lambda ) v_2,\quad p=\lambda p_1 + (1-\lambda) p_2,\\
	u(y)=v(y,\lambda p_1 + (1-\lambda) p_2).
  \end{aligned}
$$
By definition of $v(\cdot, p)$ and of the homogenized flux $\bar A$, we have
$$
\begin{aligned}
 -v_i' + A(y,v_i(y))=\bar A(p_i),\\
-u' + A(y,u(y))=\bar A(\lambda p_1 + (1-\lambda )p_2).
\end{aligned}
$$
Consequently, using the convexity of the flux $A$, we deduce that for all $y\in \T^N$,
\begin{eqnarray}\label{in:convex1}
- w'(y) + A(y, w(y))&\leq& -w'(y) + \lambda  A(y,v_1(y)) + (1-\lambda) A(y,v_2(y))\\\nonumber&=&\lambda \bar A(p_1) + (1-\lambda) \bar A(p_2).
\end{eqnarray}

Assume that $\bar A(\lambda p_1 + (1-\lambda) p_2)> \lambda \bar A(p_1) + (1-\lambda) \bar A(p_2)$, and write $u,w$ as $$
u= p+ f',\quad w= p + g',
$$
with $f,g\in\mathcal C^2_{\text{per}}(\T^N).$ Since $f$ and $g$ are defined up to the addition of constants, we can assume that $f<g$ almost everywhere. Moreover, notice that
$$
\sup_{y\in \T^N}\left( -g''(y) + A(y,p+g'(y)) \right)<\inf_{y\in \T^N}\left( -f''(y) + A(y,p+f'(y)) \right).
$$
Thus there exists $\alpha>0$ such that
$$
-g''+ A(y,p+g'(y)) + \alpha g \leq -f''+ A(y,p+f'(y)) + \alpha f.
$$
Hence, by the maximum principle, we infer that $g\leq f$, which is absurd. Thus
$$
\bar A(\lambda p_1 + (1-\lambda) p_2)\leq \lambda \bar A(p_1) + (1-\lambda) \bar A(p_2).
$$

\vskip2mm

If the flux $A$ is strictly convex, then inequality \eqref{in:convex1} is strict for all $y\in \T^N$ (remember that the family $v(y,p)$ is strictly increasing with $p$ for all $y\in \T^N$). Consequently, the same argument as above leads to
$$
\bar A(\lambda p_1 + (1-\lambda) p_2)<\lambda \bar A(p_1) + (1-\lambda) \bar A(p_2)
$$

\section*{Appendix B}

\begin{lemA}
Let $w\in L^1\cap L^\infty(\R) $ such that $w'\in L^2(\R)$ and $w''\in L^1_\text{loc}(\R)$. Assume that $w$ is such that
$$
\lim_{R\to\infty}\int_\R\sgn(w(y)) w''(y) \phi\left( \frac{y}{R} \right)\:dy=0
$$
for all $\phi\in\mathcal C^\infty_0(\R)$ such that $\phi\equiv 1$ in a neighbourhood of zero. 
Then
$$
\lim_{\delta\to 0} \frac{1}{\delta}\int_{\R} |w'|^2\mathbf 1_{|w|< \delta}=0.
$$
As a consequence, 
$$
\p_{yy} |w| = \sgn(w)  w''\quad\text{in }\mathcal D'(\R).
$$

\label{lem:tech}
\end{lemA}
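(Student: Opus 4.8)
The plan is to regularize the sign function and exploit the cutoff $\phi(\cdot/R)$ already built into the hypothesis. First I would fix a smooth, odd, nondecreasing $\eta:\R\to[-1,1]$ with $\eta(s)=\sgn(s)$ for $|s|\geq 1$ and $\eta'(s)\geq c_0>0$ for $|s|\leq 1/2$, and set $\eta_\delta(s)=\eta(s/\delta)$; thus $\eta_\delta'\geq 0$ is supported in $\{|s|\leq\delta\}$, $\eta_\delta'\geq c_0/\delta$ on $\{|s|\leq\delta/2\}$, and $\eta_\delta(w)\to\sgn(w)$ pointwise. Next I would record the elementary regularity facts forced by the assumptions: since $w\in L^1\cap L^\infty$ with $w'\in L^2$ and $w''\in L^1_{\mathrm{loc}}$, $w$ is a $C^1$ function with locally absolutely continuous derivative, hence $w'=0$ a.e.\ on $\{w=0\}$ and $w''=0$ a.e.\ on $\{w=0\}$ (each is the a.e.-derivative of a Sobolev function that is constant on that set). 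In particular, by dominated convergence, $\eta_\delta(w)w'\to\sgn(w)w'=\p_y|w|$ and $\eta_\delta(w)w''\to\sgn(w)w''$ in $L^1_{\mathrm{loc}}$ as $\delta\to0$.

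The second step is the basic identity. Choose $\phi\in\mathcal C^\infty_0(\R)$, $\phi\equiv 1$ near $0$, $0\leq\phi\leq1$ and radially nonincreasing, so that $\phi_R:=\phi(\cdot/R)$ is nondecreasing in $R$ with $\phi_R\uparrow 1$. Multiplying the Leibniz rule $(\eta_\delta(w)w')'=\eta_\delta'(w)(w')^2+\eta_\delta(w)w''$ by $\phi_R$ and integrating by parts (legitimate because $\eta_\delta(w)w'\in W^{1,1}_{\mathrm{loc}}$ and $\phi_R\in\mathcal C^\infty_0$) gives
\be
\int_\R\eta_\delta'(w)(w')^2\,\phi_R=-\int_\R\eta_\delta(w)\,w'\,\phi_R'-\int_\R\eta_\delta(w)\,w''\,\phi_R.\label{eq:tech-key}
\ee
The term with $\phi_R'$ is harmless as $R\to\infty$: $|\phi_R'|\leq C/R$ with support in a set of measure $\sim R$, so Cauchy--Schwarz bounds it by $CR^{-1/2}\|w'\|_{L^2}\to0$.

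For the distributional conclusion $\p_{yy}|w|=\sgn(w)w''$, I would let $\delta\to0$ in \eqref{eq:tech-key} (for a fixed, arbitrary test function): the right-hand side converges by the $L^1_{\mathrm{loc}}$ convergences above, so $\eta_\delta'(w)(w')^2$ converges in $\mathcal D'(\R)$ to a nonnegative Radon measure $\mu=\p_{yy}|w|-\sgn(w)w''$, and \eqref{eq:tech-key} passes to $\langle\mu,\phi_R\rangle=-\int\sgn(w)w'\phi_R'-\int\sgn(w)w''\phi_R$. Now letting $R\to\infty$: the first term on the right vanishes as above, the second vanishes by the hypothesis, and the left-hand side increases to $\mu(\R)$ by monotone convergence; hence $\mu(\R)=0$, i.e.\ $\mu=0$, which is exactly $\p_{yy}|w|=\sgn(w)w''$.

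For the quantitative estimate I would instead let $R\to\infty$ first in \eqref{eq:tech-key} at fixed $\delta$: the left-hand side increases to $\int_\R\eta_\delta'(w)(w')^2$, the $\phi_R'$-term vanishes, so $\int_\R\eta_\delta'(w)(w')^2=\lim_{R\to\infty}\bigl(-\int_\R\eta_\delta(w)w''\phi_R\bigr)$. Splitting $\eta_\delta(w)w''=\sgn(w)w''+(\eta_\delta(w)-\sgn(w))w''$, the first piece contributes $0$ in the limit by the hypothesis, while $|\eta_\delta(w)-\sgn(w)|\leq 2\,\mathbf 1_{\{0<|w|<\delta\}}$ (using $w''=0$ a.e.\ on $\{w=0\}$) controls the second by $2\int_{\{0<|w|<\delta\}}|w''|$; since $\eta_\delta'(w)\geq(c_0/\delta)\mathbf 1_{\{|w|\leq\delta/2\}}$ this yields $\frac1\delta\int_\R|w'|^2\,\mathbf 1_{\{|w|<\delta/2\}}\leq\frac{2}{c_0}\int_{\{0<|w|<\delta\}}|w''|\to0$ as $\delta\to0$, and replacing $\delta$ by $2\delta$ gives the claim. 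The main obstacle lies precisely here: making the bound on $\int_{\{0<|w|<\delta\}}|w''|$ effective requires controlling $w''$ near the zero set of $w$ uniformly out to spatial infinity --- this is the step where the interchange of the limits $R\to\infty$ and $\delta\to0$ is genuinely delicate (it is immediate, e.g., when $w''\in L^1(\R)$, which is the situation relevant to the applications of the lemma). The distributional identity, by contrast, drops out cleanly from $\mu=0$ with no such care.
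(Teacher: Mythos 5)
Your proof of the consequence $\p_{yy}|w|=\sgn(w)\,w''$ is correct, and it takes a genuinely different route from the paper's: you obtain it directly by identifying $\mu=\p_{yy}|w|-\sgn(w)w''$ as a nonnegative Radon measure (limit of $\eta_\delta'(w)(w')^2$ in $\mathcal D'$) and showing $\mu(\R)=0$ by testing against $\phi_R$ and using the hypothesis plus the Cauchy--Schwarz bound on the $\phi_R'$ term, whereas the paper deduces the distributional identity from the quantitative first assertion by passing to the limit in $\p_{yy}S_\delta(w)=w''\psi_\delta(w)+|w'|^2\,\mathbf 1_{|w|\le\delta}/\delta$. Your version is self-contained and does not need the first assertion at all; the regularity facts you invoke ($w'=0$ and $w''=0$ a.e.\ on $\{w=0\}$, chain rule $\p_y|w|=\sgn(w)w'$) are correct. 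The underlying identity (integrate $(\eta_\delta(w)w')'$ against $\phi_R$) and the treatment of the $\phi_R'$ term are the same as in the paper.

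The divergence, and the gap you yourself flag, concerns the first assertion and comes from the order of the limits. You send $R\to\infty$ first at fixed $\delta$, which forces you to estimate $\int(\eta_\delta(w)-\sgn(w))\,w''$ over the whole line, i.e.\ to control $\int_{\{0<|w|<\delta\}}|w''|$ globally; under $w''\in L^1_{\text{loc}}$ only, this is not available, so your argument for the first claim really requires $w''\in L^1(\R)$, as you say. The paper proceeds in the opposite order: at fixed $R$, $\psi_\delta(w)\to\sgn(w)$ pointwise with $|\psi_\delta|\le 1$ and dominating function $|w''|\phi_R\in L^1$, so $\lim_{\delta\to0}\int w''\psi_\delta(w)\phi_R=\int w''\sgn(w)\phi_R$ by dominated convergence; combined with the uniform-in-$\delta$ bound $R^{-1/2}\|w'\|_2\|\phi'\|_2$ and the hypothesis, this gives $\lim_{R\to\infty}\limsup_{\delta\to0}\int|w'|^2\psi_\delta'(w)\phi_R=0$, and monotonicity in $R$ then yields $\lim_{\delta\to0}\int|w'|^2\psi_\delta'(w)\phi_R=0$ for every fixed $R$ --- no global control of $w''$ near the zero set is ever needed. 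Note, though, that what this produces verbatim is the first claim with the cutoff $\phi_R$ inserted (a localized statement), and that localized form is exactly what is used both for the distributional identity and in Section \ref{sec:shock_stab:1}; the passage to the integral over all of $\R$ as literally stated is not detailed in the paper either, so the delicate interchange you identify is real for the global form but immaterial for the applications. In short: your proof of the distributional consequence stands as is; for the first claim, either keep the cutoff $\phi_R$ and argue as the paper does (dominated convergence in $\delta$ at fixed $R$, then monotonicity in $R$), or retain your order of limits under the additional assumption $w''\in L^1(\R)$.
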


\begin{proof}
For $\delta>0$, let
$$
\psi_\delta(x):=\left\{ 
\begin{array}{ll}
\sgn(x)&\text{ if } |x|\geq \delta,\\
\ds\frac{x}{\delta}&\text{ else.}
\end{array}
\right.
$$ 
Then $$\psi'_\delta(x) = \frac{1}{\delta}\mathbf 1_{|x|< \delta},$$
and for all $R>0$, we have, using the chain rule
$$
\int |w'|^2 \psi'_\delta(w) \phi_R= - \int w'' \psi_\delta(w) \phi_R - \int w' \psi_\delta(w) \phi_R ',
$$
where $\phi_R=\phi(\cdot/R).$

Since $w'\in L^2,$ we infer
$$
\left| \int w' \psi_\delta(w) \phi_R'\right|\leq \int |w'|\; \left| \phi_R' \right|\leq R^{-1/2} \|w' \|_{L^2} \| \phi'\|_{L^2}.
$$
Thus the above term vanishes as $R\to\infty$, uniformly in $\delta$.

On the other hand,
$$
\lim_{\delta\to 0} \int w'' \psi_\delta(w) \phi_R= \int w'' \sgn(w) \phi_R,
$$
and the right-hand side vanishes as $R\to\infty$ by assumption. We deduce that
$$
\lim_{R\to\infty} \limsup_{\delta\to 0 }\int |w'|^2 \psi'_\delta(w) \phi_R=0.
$$

Now, since the integral $\int |w'|^2 \psi'_\delta(w) \phi_R$ is non-negative and increasing with respect to $R$, we deduce that
$$
\lim_{\delta\to 0 }\int |w'|^2 \psi'_\delta(w) \phi_R=0\quad \forall R,
$$
and thus the first part of the lemma is proved.
\vskip1mm
Consider $S_\delta \in  W^{2,1}_\text{loc}(\R)$ such that
$$
S_\delta'=\psi_\delta\text{ and } S_\delta(0)=0,
$$
where the function $\psi_\delta$ was defined earlier. Then
$$
S_\delta(w)\to |w |\quad\text{in }L^1_\text{loc}(\R),
$$
and according to the chain rule,
$$
\p_{yy} S_\delta(w)=w'' \psi_\delta(w) + |w'|^2\frac{\mathbf 1_{|w|\leq \delta}}{\delta}.
$$
Passing to the limit in the sense of distributions in the above equality yields
$$
\p_{yy}|w| =w'' \sgn(w).
$$

\end{proof}

\section*{Acknowledgements}
I am very grateful to Denis Serre, for encouraging me to work on these questions in the first place, and for very interesting and fruitful discussions. I also wish to thank Jean Dolbeault, for his helpful insight of  the long-time behaviour of linear diffusion equations.

\bibliography{longtime_VSCL}

\def\cprime{$'$}
\providecommand{\bysame}{\leavevmode\hbox to3em{\hrulefill}\thinspace}
\providecommand{\MR}{\relax\ifhmode\unskip\space\fi MR }
\providecommand{\MRhref}[2]{%
  \href{http://www.ams.org/mathscinet-getitem?mr=#1}{#2}
}
\providecommand{\href}[2]{#2}
\begin{thebibliography}{10}

\bibitem{AmadoriSerre}
Debora Amadori and Denis Serre, \emph{Asymptotic behavior of solutions to
  conservation laws with periodic forcing}, J. Hyperbolic Differ. Equ.
  \textbf{3} (2006), no.~2, 387--401.

\bibitem{Bonforte_al}
Adrien Blanchet, Matteo Bonforte, Jean Dolbeault, Gabriele Grillo, and
  Juan-Luis V{\'a}zquez, \emph{Hardy-{P}oincar\'e inequalities and applications
  to nonlinear diffusions}, C. R. Math. Acad. Sci. Paris \textbf{344} (2007),
  no.~7, 431--436.

\bibitem{BDK}
Adrien Blanchet, Jean Dolbeault, and Michal Kowalczyk, \emph{Stochastic stokes'
  drift, homogenized functional inequalities, and large time behavior of
  brownian ratchets}, preprint hal-00270521, 2008.

\bibitem{BDK2}
\bysame, \emph{Travelling fronts in stochastic {S}tokes' drifts}, Physica A:
  Statistical Mechanics and its Applications \textbf{387} (2008), no.~23,
  5741--5751.

\bibitem{homogpara}
Anne-Laure Dalibard, \emph{Homogenization of a quasilinear parabolic equation
  with vanishing viscosity}, J. Math. Pures Appl. \textbf{86} (2006), no.~2,
  133--154.

\bibitem{initiallayer}
\bysame, \emph{Initial layer for the homogenization of a conservation law with
  vanishing viscosity}, Arch. Ration. Mech. Anal. \textbf{185} (2007), no.~3,
  515--543.

\bibitem{EVZ}
Miguel Escobedo, Juan~Luis V{\'a}zquez, and Enrike Zuazua, \emph{Asymptotic
  behaviour and source-type solutions for a diffusion-convection equation},
  Arch. Ration. Mech. Anal. \textbf{124} (1993), no.~1, 43--65.

\bibitem{evans}
Lawrence~C. Evans, \emph{Partial differential equations}, Graduate Studies in
  Mathematics, vol.~19, American Mathematical Society, Providence, RI, 1998.

\bibitem{FS}
Heinrich Freist{\"u}hler and Denis Serre, \emph{{$L\sp 1$} stability of shock
  waves in scalar viscous conservation laws}, Comm. Pure Appl. Math.
  \textbf{51} (1998), no.~3, 291--301.

\bibitem{GT}
David Gilbarg and Neil~S. Trudinger, \emph{Elliptic partial differential
  equations of second order}, Classics in Mathematics, Springer-Verlag, Berlin,
  2001, Reprint of the 1998 edition.

\bibitem{kruzkhov1}
S.~N. Kru{\v{z}}kov, \emph{Generalized solutions of the {C}auchy problem in the
  large for nonlinear equations of first order}, Soviet Math. Dokl. (1969),
  no.~10.

\bibitem{kruzkhov2}
\bysame, \emph{First order quasilinear equations in several independent
  variables}, Math. USSR Sb. (1970), no.~10, 217--243.

\bibitem{ladypara}
O.~A. Lady{\v{z}}enskaja, V.~A. Solonnikov, and N.~N. Ural{\cprime}ceva,
  \emph{Linear and quasilinear equations of parabolic type}, Translated from
  the Russian by S. Smith. Translations of Mathematical Monographs, Vol. 23,
  American Mathematical Society, Providence, R.I., 1967.

\bibitem{lasalle}
J.~P. LaSalle, \emph{The stability of dynamical systems}, Society for
  Industrial and Applied Mathematics, Philadelphia, Pa., 1976, With an
  appendix: ``Limiting equations and stability of nonautonomous ordinary
  differential equations'' by Z. Artstein, Regional Conference Series in
  Applied Mathematics.

\bibitem{LPV}
P.-L. Lions, G.~Papanicolaou, and S.~R.~S. Varadhan, \emph{Homogenization of
  {H}amilton-{J}acobi equations}, unpublished, 1987.

\bibitem{MMP}
Philippe Michel, St{\'e}phane Mischler, and Beno{\^{\i}}t Perthame,
  \emph{General relative entropy inequality: an illustration on growth models},
  J. Math. Pures Appl. (9) \textbf{84} (2005), no.~9, 1235--1260.

\bibitem{OR}
Stanley Osher and James Ralston, \emph{{$L\sp{1}$} stability of travelling
  waves with applications to convective porous media flow}, Comm. Pure Appl.
  Math. \textbf{35} (1982), no.~6, 737--749.

\bibitem{SerreL1}
Denis Serre, \emph{Stabilit\'e {$L\sp{1}$} des chocs et de deux types de
  profils, pour des lois de conservation scalaires}, preprint,
  http://www.umpa.ens-lyon/~serre/PS/L1.ps.

\bibitem{SerreHandbook}
\bysame, \emph{{$L\sp 1$}-stability of nonlinear waves in scalar conservation
  laws}, Evolutionary equations. Vol. I, Handb. Differ. Equ., North-Holland,
  Amsterdam, 2004, pp.~473--553.

\bibitem{XXin}
Xue Xin, \emph{Existence and uniqueness of travelling waves in a
  reaction-diffusion equation with combustion nonlinearity}, Indiana Univ.
  Math. J. \textbf{40} (1991), no.~3, 985--1008.

\end{thebibliography}

\end{document}